\newcommand\nl{\nonumber\\&}
\newcommand\longdto{\overset{d}{\longrightarrow}}
\newcommand{\Prob}{{\mathbb{P}}}
\newcommand{\N}{{\mathbb N}}
\DeclareMathOperator{\FIND}{FIND}
\newcommand\swap{^{\mathrm{swap}}}
\newcommand\comp{} % Markiert G als G^\mathrm{comp}, aber momentan leer.
\newtheorem{definition}{Definition}[section]
\newtheorem{remark}[definition]{Remark}
\newtheorem{theorem}[definition]{Theorem}
\newtheorem{lemma}[definition]{Lemma}
\newtheorem{proposition}[definition]{Proposition}
\newtheorem{corollary}[definition]{Corollary}
\title{On fine fluctuations of the complexity of the QuickSelect algorithm}
\author[J. Ischebeck \and R. Neininger]{Jasper Ischebeck, Ralph Neininger}
\thanks{}
\address{Jasper Ischebeck, {\tt ischebec@math.uni-frankfurt.de}, Goethe University Frankfurt, Institute of Mathematics, 60629 Frankfurt a.M., Germany }
\address{Ralph Neininger, {\tt neininger@math.uni-frankfurt.de}, Goethe University Frankfurt, Institute of Mathematics, 60629 Frankfurt a.M., Germany }
\begin{document}

\begin{abstract}
The Quickselect algorithm (also called FIND) is a fundamental algorithm for selecting ranks or quantiles within a set of data.
 Grübel and Rösler showed that the number of key comparisons required by Quickselect considered as a process of the quantiles $\alpha\in[0,1]$  converges within a natural probabilistic model after normalization in distribution within the càdlàg space $D[0,1]$ endowed with the Skorokhod metric.

We show that the residual process in the latter convergence after normalization converges in distribution towards a mixture of Gaussian processes in $D[0,1]$. A similar result holds for the related algorithm QuickVal.
Our method is applicable to other cost measures such as the number of swaps (key exchanges) required by Quickselect, or cost measures being based on key comparisons taking additionally into account that the cost of a comparison between two keys may depend on their values, an example being the number of bit comparisons needed to compare keys given by their bit expansions. For all the arising mixtures of Gaussian limit processes, we also discuss the H\"older continuity of their paths.
\end{abstract}
\maketitle

\section{Introduction and results}

In 1961, Hoare \cite{hoareQuickselect} introduced the Quickselect algorithm, which he called FIND, to select a key (an element) of a given rank from a linearly ordered finite set of data. We assume that the data are distinct real numbers. To be definite, a simple version of the Quickselect algorithm is given as follows: $\FIND(S,k)$ has as input a set $S=\{s_1,\ldots,s_n\}$ of distinct real numbers of size $n$ and an integer $1\le k \le n$. The algorithm $\FIND$ operates recursively as follows:
If $n=1$ we have $k=1$ and $\FIND$ returns the single element of $S$. If $n\ge 2$ and $S=\{s_1,\ldots,s_n\}$ the algorithm first chooses an element from $S$, say $s_j$, called pivot, and generates the sets
\begin{align*}
    S_<:=\{s_i\,|\, s_i<s_j, i\in\{1,\ldots,n\}\setminus\{j\}\}, \quad S_\ge:=\{s_i\,|\, s_i\ge s_j, i\in\{1,\ldots,n\}\setminus\{j\}\}.
\end{align*}
%with $s_{j_i}< s_1$ for $i=1,\ldots,{m-1}$ and $j_1<\cdots<j_{m-1}$ and  $s_{j_i}\ge s_1$ for $i=m+1,\ldots,n$ and $j_{m+1}<\cdots<j_n$. Hence, $m-1\in\{0,\ldots,n-1\}$ is the number of components of $(s_1,\ldots,s_n)$ being smaller than the pivot $s_1$.
If $k=|S_<|+1$, the algorithm returns $s_j$. If $k \le |S_<|$, recursively  $\FIND(S_<,k)$ is applied. If $k \ge |S_<|+2$,  recursively  $\FIND(S_\ge,k-|S_<|-1)$ is applied. Note that $\FIND(S,k)$ returns the element of rank $k$ from $S$. There are various variants of the algorithm, in particular regarding how the pivot element is chosen and how $S$ is partitioned into the subsets $S_<$ and $S_\ge$.
%In particular, for technical reasons we neglect the improvement of returning $s_1$ directly without any further recursive call if $k=|S_<|+1$. Note that our sublists $S_<$ and $S_\ge$ preserve the order of indices. This can be achieved by suitable implementations, although various versions of $\FIND$ do not have this property, we comment on such issues later.

In a standard probabilistic model one assumes that the data are ordered, i.e.~given as a vector $(s_1,\ldots,s_n)$, and are randomly permuted, all permutations being equally likely. This can be achieved assuming that the data are given as $(U_1,\ldots,U_n)$ where $(U_j)_{j\in \NN}$ is a sequence of i.i.d.~random variables with distribution $\mathrm{unif}[0,1]$, the uniform distribution over the unit interval $[0,1]$. This is the probabilistic model considered below. Note that the randomness is within the data, while the algorithm is deterministic.

Various cost measures have been considered for Quickselect, mainly the number of key comparisons required, which we also analyse. Furthermore, we analyse the number of swaps (key exchanges) required and cost measures which are based on key comparisons, where the cost of a comparison may depend on the values of the keys $s_i$, $s_j$, and the number of bit comparisons required to decide whether $s_i<s_j$ is a prominent example.

%below in Sections \ref{Sec:swaps}, \ref{Sec:bitcomparison}. We denote by $S_{n}(k)$ the number of key comparisons required by $\FIND((U_1,\ldots,U_n),k)$ for $1\le k\le n$. Note that generating the sublists $S_<$ and $S_\ge$ in the first step requires $n-1$ key comparisons. As a convention, $S_n(0)$ is defined as the number of key comparisons when entering list $S_<$ in all recursion steps.  We consider the normalized stochastic process $X_n=(X_n(t))_{t\in[0,1]}$ given by
%\begin{align*}
%X_n(t):=\frac{1}{n}S_{n}(\floor{tn}+1) \quad\mbox{for} \quad 0\le t<1\quad\mbox{and}  \quad X_n(1):=\frac{1}{n}S_{n}(n).
%\end{align*}
%Note that $(X_n(t))_{t\in[0,1]}$ is a process in the space $(D[0,1],d_\mathrm{SK})$ of càdlàg functions endowed with the Skorokhod metric, see \cite{billing} for probabilistic background. It was shown by Gr\"ubel and R\"osler  \cite{roesler:quickselect} that $(X_n(t))_{t\in[0,1]}$ converges in distribution within $(D[0,1],d_\mathrm{SK})$ to a limit $X=(X(t))_{t\in[0,1]}$. Discontinuities of the paths of $X_n$ and $X$ appear at random times, such as $t=U_1$.

For analysis purposes a related process, called  $\mathrm{QuickVal}$, has been considered, see \cite{vaclfifl09, FillNakama}. Informally, $\mathrm{QuickVal}$ for an $\alpha\in[0,1]$ mimics QuickSelect to select (or to try to select) the value $\alpha$ from the set of data, which, in our probabilistic model for large $n$, comes close to QuickSelect selecting rank $\lfloor \alpha n\rfloor$.  To be definite, $\mathrm{QuickVal}((U_1,\ldots,U_n),\alpha)$ compares the $U_i$ with $U_1$ to generate sublists
\begin{align*}
    S_<:=(U_{j_1},\ldots,U_{j_{m-1}}), \quad S_\ge:=(U_{j_{m+1}},\ldots,U_{j_n}),
\end{align*}
with $U_{j_i}< U_1$ for $i=1,\ldots,{m-1}$ and $2\le j_1<\cdots<j_{m-1}$ and  $U_{j_i}\ge U_1$ for $i=m+1,\ldots,n$ and $2\le j_{m+1}<\cdots<j_n$. Hence, $m-1\in\{0,\ldots,n-1\}$ is the number of the $U_i$, $2\le i\le n$, being smaller than $U_1$. The algorithm recursively calls  $\mathrm{QuickVal}(S_<,\alpha)$  if $\alpha < U_1$ and $|S_<|>0$. If $\alpha \ge U_1$ and $|S_\ge|>0$ recursively $\mathrm{QuickVal}(S_\ge,\alpha-U_1)$ is called. The number of key comparisons required by $\mathrm{QuickVal}((U_1,\ldots,U_n),\alpha)$ is denoted by $S_{\alpha,n}$.

To describe the processes $(S_{\alpha,n})_{\alpha\in[0,1]}$ and their limit (after scaling) conveniently we also consider the binary search tree constructed from the the data $(U_i)_{i\in\N}$. Part of the following definitions are depicted in Figure \ref{fig_1}.  The data are inserted into the rooted infinite binary tree, where we denote its nodes by the elements of $\mg{0,1}^\ast:=\cup_{n=0}^\infty \{0,1\}^n$ as follows. Its root is denoted by the empty word $\epsilon$ and for each node $\phi\in\mg{0,1}^\ast$ we denote by $\phi0$ and  $\phi1$ (the word $\phi$ appended with a $0$ resp.~$1$) its left and right child respectively. Moreover, $\abs\phi$ denotes the length of the word $\phi$, which is the depth of the corresponding node in the tree. To construct the binary search tree for $(U_1,\ldots,U_n)$ the first key $U_1$ is inserted into the root and occupies the root. Then,
successively the following keys are inserted, where each key traverses the already occupied nodes starting at the
root as follows: Whenever the key traversing is less than the occupying key at a node it moves on to the left
child of that node, otherwise to its right child. The first empty node found is occupied by the key.

To describe the costs of the algorithms we organize, using notation of Fill and Nakama \cite{FillNakama}, the sub-intervals $([L_\phi,R_\phi))_{\phi\in \mg{0,1}^\ast}$ implicitly generated starting with $[0,1)=:[L_\epsilon,R_\epsilon)$
%By associating $0$ with proceeding with the elements smaller than the pivot and $1$ with proceeding with the bigger elements, each possible step in either QuickVal or QuickSelect can be associated a path $\phi\in\mg{0,1}^\ast$ of zeroes and ones. We write $\phi0, \phi1$ for $\phi$ with a 0 respective 1 appended and $\abs\phi$ for the length of the path. The values at some path $\phi$ compare the same to the previous pivot and thus lie in some interval $[L_\phi,R_\phi)$ and the first of the values in the interval is chosen as the pivot. Formally, we
and recursively setting
\begin{align}
    \tau_\phi &:= \inf\mg{i\in\NN \,|\,L_\phi < U_i < R_\phi},
    \nonumber\\
    L_{\phi0} &:= L_{\phi}, \quad
    R_{\phi1} := R_{\phi},\quad
    L_{\phi1} := R_{\phi0} := U_{\tau_\phi},\quad
    I_{\phi} := R_{\phi}-L_\phi.
    \label{defi:irl}
\end{align}
Now, if a sublist starting with pivot $U_{\tau_\phi}$ has to be split by $\mathrm{QuickVal}$, the keys which are inserted in the subtree rooted at $U_{\tau_\phi}$ need to be compared with $U_{\tau_\phi}$. Hence, we get a contribution of key comparisons of
\begin{equation}
    S_{\phi,n} = \sum_{\tau_\phi<k≤n}\eins_{[L_\phi,R_\phi)}(U_k)
    \label{defi:snphi}.
\end{equation}
% Furthermore, define $\mathcal F_k := \sigma\mg*{I_\phi\mid \phi\in\mg{0,1}^\ast, \abs\phi≤ k}$, the $\sigma$-algebra generated by the lengths of the  intervals above level $k$, as well as $\mathcal F_\infty := \sigma\mg*{I_\phi\mid \phi\in\mg{0,1}^\ast}$.

Now, for $\alpha\in[0,1]$, $\mathrm{QuickVal}((U_1,\ldots,U_n),\alpha)$ generates and splits sublists encoded by $\phi(\alpha,k)$ for $k=0,1,\ldots$ for which we obtain by  $\phi(\alpha,0)=\epsilon$  and
\begin{equation}
    \phi(\alpha,k+1) = \begin{cases}
        \phi(\alpha,k)0, & \mbox{if } \alpha < U_{\tau_{\phi(\alpha,k)}}, \\
        \phi(\alpha,k)1, & \mbox{if } \alpha ≥ U_{\tau_{\phi(\alpha,k)}}.
    \end{cases}
\end{equation}
When using the variables defined in \eqref{defi:irl} or \eqref{defi:snphi}, we abbreviate the notation $\phi(\alpha,k)$ by $\alpha,k$, such as writing $I_{\alpha,k} := I_{\phi(\alpha,k)}$ or $S_{\alpha,k,n} := S_{\phi(\alpha,k),n}$.
% QuickVal $L_{\alpha,0}=0$, $R_{\alpha,0}=1$ and for $k≥0$:
% \begin{align*}
% 	\tau_{\alpha,k} & := \inf\{ i: L_{\alpha,k} ≤ U_i < R_{\alpha,k} \}, \\
% 	L_{\alpha,k+1} & := \eins_{\{ \alpha\; <\; U_{\tau_{\alpha,k}} \}} L_{\alpha,k} + \eins_{\{ \alpha\; ≥\; U_{\tau_{\alpha,k}} \}} U_{\tau_{\alpha,k}}, \\
% 	R_{\alpha,k+1} & := \eins_{\{ \alpha\; <\; U_{\tau_{\alpha,k}} \}} U_{\tau_{\alpha,k}}  + \eins_{\{ \alpha\; ≥\; U_{\tau_{\alpha,k}} \}} R_{\alpha,k}.
% \end{align*}

\begin{figure}
    \centering
    \tikzset{
        nicht/.style = {top color=white, bottom color=gray!40,
            label = center:\textsf{}},
        normal/.style = {minimum size = 12mm, top color=white, bottom color=blue!20,
            label = center:\textsf{\Large }},
        gross/.style = {minimum size = 12mm, top color=white, bottom color=gray!40, text = black,
            label = center:\textsf{}}
    }
    \begin{tikzpicture}[
    scale = 1, transform shape, thick,
    every node/.style = {draw, circle, minimum size = 8mm},
    grow = down,  % alignment of characters
    level 1/.style = {sibling distance=4cm},
    level 2/.style = {sibling distance=2cm},
    level 3/.style = {sibling distance=2cm},
    level distance = 1.5cm
    ]
    \node[normal] (Start) { $U_{\tau_{\epsilon}}$}
    child {   node [gross] (A) {$U_{\tau_0}$}}
    child {   node [normal] (D) {$U_{\tau_{\alpha,1}}$}
        child { node [normal] (E) {$U_{\tau_{\alpha,2}}$}
            child{ node [normal] (G) {...} }}
        child { node [gross] (F) {$U_{\tau_{10}}$}}
    };

    \begin{scope}[nodes = {draw = none}]
    \path (Start) -- (D) node [near start, right] {$\alpha≥U_{\tau_{\epsilon}}$};
    \path (D)     -- (E) node [near start, left]  {$\alpha<U_{\tau_{\alpha,1}}$};
    \end{scope}

    \end{tikzpicture}
    \caption{Part of the binary search tree. The pivots of sublists split by $\mathrm{QuickVal}((U_1,\ldots,U_n),\alpha)$ for some $\alpha\in[0,1]$ are on the path indicated. Note that we have $\tau_\epsilon=\tau_{\phi(\alpha,0)}=\tau_{\alpha,0}=1$ and  in this example $\alpha\ge U_1$ and $\alpha<U_{\tau_{\alpha,1}}$ so that $\phi(\alpha,2)=10\in\{0,1\}^2$. }
 \label{fig_1}   \end{figure}
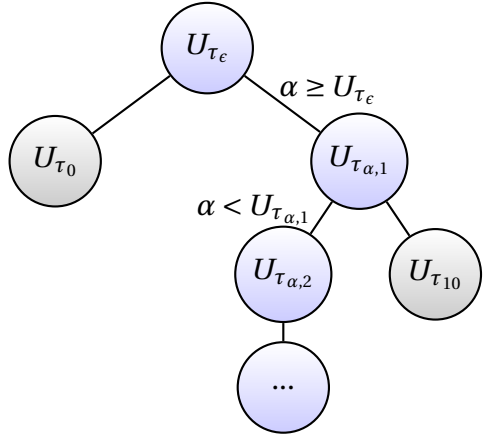
% Furthermore, we denote by $X_{\alpha,k,i} := \eins_{\{ L_{\alpha,k-1} ≤ U_i < R_{\alpha,k-1} \}}$ the cost of comparing $U_i$ to the $k^{\text{th}}$ pivot ($k\geq 1$, $i=1,\ldots,n$) and by $S_{\alpha,k,n} := \sum_{\tau_{\alpha,k}<i\leq n}X_{\alpha,k,i} $ the total comparison cost attributed to the $k^{\text{th}}$ pivot.
The number of key comparisons required by $\mathrm{QuickVal}((U_1,\ldots,U_n),\alpha)$ is then given by the (finite) sum
\begin{align*}
S_{\alpha,n} = \sum_ {k=1}^{\infty}S_{\alpha,k,n}.
\end{align*}
%
% Define $I_{\alpha,k} := R_{\alpha,k}-L_{\alpha,k}$ as the length of the interval of values that will be compared with the $k+1$-th pivot $\tau_{k+1,\alpha}$ and $\mathcal F_k := \sigma\mg*{I_{\alpha,l}\mid l<k,\alpha\in[0,1]}$, the $\sigma$ algebra of the interval sizes above level $k$, as well as $\mathcal F_\infty := \sigma\kl*{\bigcup_{k\in\NN}\mathcal F_k}$.
%
Fill and Nakama \cite[Theorem 3.2]{FillNakama} showed (considering more general complexity measures) that for each $\alpha\in[0,1]$ almost surely
\begin{align}\label{limit_fn}
\frac{1}{n}S_{\alpha,n} \to S_\alpha := \sum_{k=0}^{\infty}I_{\alpha,k}, \quad  (n\to\infty).
\end{align}
The latter convergence also holds in $L_p$, see Fill and Matterer \cite[Proposition 6.1]{fima14}.

We take the point of view that such an almost sure asymptotic result may be considered a strong law of large numbers (SLLN). The subject of the  present paper is to study the fluctuations in such SLLN,  sometimes called a central limit analogue.  We study these fluctuations as processes in the metric space $(D[0,1],d_\mathrm{SK})$ of càdlàg functions endowed with the Skorokhod metric; see \Cref{sec:d01} for the definitions and Billingsley \cite{billing} for background on weak convergence of probability measures on metric spaces in general and on $(D[0,1],d_\mathrm{SK})$ in particular. Note that, by definition,  $(S_{\alpha,n})_{\alpha\in[0,1]}$ and  $(S_{\alpha})_{\alpha\in[0,1]}$ have càdlàg paths almost surely.

The analysis of $\mathrm{QuickVal}$ is usually considered an intermediate step to analyze the original $\FIND$ algorithm. Gr\"ubel and R\"osler \cite{roesler:quickselect} already pointed out that a version of  $\FIND$ such as stated above with $C^\ast_n(k)$ denoting the number of key comparisons for finding rank $k$ within $(U_1,\ldots,U_n)$ does not lead  to convergence within $(D[0,1], d_\mathrm{SK})$ after the normalization $\alpha \mapsto \frac{1}{n}C^\ast_n(\floor{\alpha n}+1)$, where here and below the convention $C^\ast_n(n+1):=C^\ast_n(n)$ is used. To overcome this problem they propose a version that does not stop in case a pivot turns out to be the rank to be selected by including the pivot in the list $S_<$ and proceeding until a list of size $1$ is generated. Moreover, their pivots are chosen uniformly at random. The number of key comparisons  $C'_n(k)$ for Grübel and Rösler's $\FIND$-version has the property that
\begin{align}\label{GR_lilaw}
    \kl*{\frac{1}{n}C'_n\kl[\big]{\floor{\alpha n}+1}}_{\alpha\in[0,1]} \longdto (S_\alpha)_{\alpha\in[0,1]}, \quad \mbox{ in } (D[0,1],d_\mathrm{SK}),
\end{align}
see \cite[Theorem 4]{roesler:quickselect}. We may also obtain right-continuous limits with our deterministic choice of pivots by just recursively calling $\FIND(S_\ge,0)$ in case the pivot turns out to be the rank sought. We denote the number of key comparisons for this version by $C_n(k)$, which is close to Grübel and Rösler's $\FIND$-version and also satisfies \eqref{GR_lilaw}.

The convergence in \eqref{GR_lilaw} could only be stated weakly (not almost surely) since Gr\"ubel und R\"osler's $\FIND$-version due to randomization within the algorithm does not have a natural embedding on a probability space. Note that the formulation of the $\mathrm{QuickVal}$ complexity does have such an embedding which, e.g., makes the almost sure convergence in \eqref{limit_fn} possible. Also, it is easy to see that  we have the  distributional equality
\begin{equation}\label{qs:to:qv}
  \kl[\big]{C_{n}\kl[\big]{\abs{\mg{U_i≤\alpha:1≤i≤n}}}}_{\alpha\in[0,1]} \overset{d}{=}   (S_{\alpha,n})_{\alpha\in[0,1]}.
\end{equation}
This allows to naturally couple the complexities on one probability space, which we call its \textit{natural coupling}. See \cite[page 807]{fima14} for a related discussion of natural couplings.

\subsection{Results}
In this section, we collect our results for three different complexity measures, the number of key comparisons, the number of key exchanges (also called swaps), and measures being based on key comparisons taking additionally into account that the cost of a comparison between two keys may depend on their values, an example being the number of bit comparisons needed to compare keys given by their bit expansions.

Part of our results has been announced in the extended abstract \cite{ischne04}.
\subsubsection{Number of key comparisons}
As the normalized process of fluctuations we  define
\begin{align}\label{fluc_pro}
 G_n:=(G_{\alpha,n})_{\alpha\in[0,1]} := \left(\frac{S_{\alpha,n} - nS_\alpha}{\sqrt{n}}\right)_{\mathrlap{\alpha\in[0,1]}}.
\end{align}
Then we have the following result:
\begin{theorem}\label{mainTheorem}
    Let $S_{\alpha,n}$ be the number of key comparisons required by  $\mathrm{QuickVal}((U_1,\ldots,U_n),\alpha)$ and $(S_{\alpha})_{\alpha\in[0,1]}$ as in \eqref{limit_fn}. Then for the fluctuation process $G_n$ defined in  \eqref{fluc_pro} we have
\begin{align*}
    G_n \longdto G_\infty \; \mbox{ in } (D[0,1],d_\mathrm{SK})\qquad (n\to\infty),
\end{align*}
where $G_\infty$ is a mixture of centred Gaussian processes with random
covariance function given by
\begin{equation}\label{def:sigma:inf}
    \Sigma_{\infty, \alpha, \beta} := \sum_{k=0}^{J}\sum_{j=0}^\infty I_{\alpha, j \maxv k} + \mathbf{1}_{\{\alpha\neq \beta\}} (J+1)\sum_{j=J+1}^\infty (I_{\beta, j}) - S_\alpha S_\beta, \quad \alpha,\beta\in[0,1],
\end{equation}
where $J = J(\alpha,\beta):=\max\{k\in\N_0\,|\,
\tau_{\alpha,k} = \tau_{\beta,k}\}\in\N_0\cup\{\infty\}$. We have the representation
\begin{equation}
    G_{\infty} \overset d= \sum_{\phi\in\mg{0,1}^\ast}Z_\phi \eins_{[L_\phi,R_\phi)},
\end{equation}
with $Z_\phi$ defined in Section \ref{sec:lim_rep}.
\end{theorem}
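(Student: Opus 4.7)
The plan is to combine a truncation along the depth of the binary search tree, a conditional multivariate CLT for finite-dimensional distributions, and a separate tightness argument in $(D[0,1],d_{\mathrm{SK}})$. Throughout, I would condition on the tree skeleton $\mathcal{F}_K$ generated by $(L_\phi,R_\phi,\tau_\phi)$ for $|\phi|\le K$ and exploit that, given $\mathcal{F}_K$ and for $i>\max_{|\phi|\le K}\tau_\phi$, the remaining $U_i$ are i.i.d.\ uniform on $[0,1)$.

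The starting point is the level decomposition
\begin{align*}
\sqrt{n}\,G_{\alpha,n}=\sum_{k=0}^{\infty}\bigl(S_{\alpha,k,n}-nI_{\alpha,k}\bigr),
\end{align*}
split at depth $K$ into a head $A^{(K)}_{\alpha,n}$ and a tail $B^{(K)}_{\alpha,n}$. Since $I_\phi$ decays geometrically in $L^p$ with $|\phi|$, and $B^{(K)}_{\alpha,n}$ is, conditionally on the skeleton, a rescaled $\mathrm{QuickVal}$ fluctuation inside the leaf interval containing $\alpha$, the $L^p$-moment estimates of Fill and Matterer \cite{fima14} give
\begin{align*}
\lim_{K\to\infty}\limsup_{n\to\infty}\mathbb{E}\sup_{\alpha\in[0,1]}\bigl|B^{(K)}_{\alpha,n}\bigr|^{2}=0,
\end{align*}
which reduces the analysis to the head $A^{(K)}_n$ for large but fixed $K$.

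For finite-dimensional convergence I would apply a conditional multivariate Lindeberg CLT to the multinomial counts of the $U_i$ into the leaves $[L_\psi,R_\psi)$, $|\psi|=K$, to obtain joint conditional Gaussian convergence of $(A^{(K)}_{\alpha_j,n})_j$ given $\mathcal{F}_K$; Slutsky then upgrades this to unconditional convergence to the announced Gaussian mixture. The covariance in \eqref{def:sigma:inf} is identified by splitting the double sum $\sum_{j,k}\mathrm{Cov}\bigl(\mathbf{1}_{[L_{\alpha,j},R_{\alpha,j})}(U),\mathbf{1}_{[L_{\beta,k},R_{\beta,k})}(U)\bigr)$ into blocks obtained by comparing $j,k$ with $J$: the block $j,k>J$ vanishes because the intervals are disjoint after the paths split, the block $j,k\le J$ yields $\sum_{j,k\le J}I_{\alpha,j\vee k}$, and the two mixed blocks contribute $(J+1)\sum_{j>J}I_{\alpha,j}$ and $(J+1)\sum_{j>J}I_{\beta,j}$; adding $-S_\alpha S_\beta$ reproduces the stated formula after observing that the first double sum in \eqref{def:sigma:inf} already encodes the $(J+1)\sum_{j>J}I_{\alpha,j}$ contribution.

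The main obstacle will be tightness of $(G_n)_n$ in $(D[0,1],d_{\mathrm{SK}})$. The head $A^{(K)}_n$ is a finite sum of step functions $\mathbf{1}_{[L_\phi,R_\phi)}$ with tight scalar coefficients, hence tight. The difficulty lies in controlling $B^{(K)}_n$, whose jump set is dense in $[0,1]$. I would use the Skorokhod modulus $w'_\delta$ with partition points placed at the pivots $U_{\tau_\phi}$, $|\phi|\le K$, and a level-by-level second-moment estimate summed against the geometric decay of $I_\phi$ to show that for every $\varepsilon>0$,
\begin{align*}
\lim_{K\to\infty}\limsup_{n\to\infty}\Prob\bigl(\sup_{\alpha\in[0,1]}|B^{(K)}_{\alpha,n}|>\varepsilon\bigr)=0;
\end{align*}
combined with the head tightness this yields the $w'_\delta$-criterion. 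Finally, the representation $G_\infty\stackrel{d}{=}\sum_\phi Z_\phi\,\mathbf{1}_{[L_\phi,R_\phi)}$ is assembled from the node-level Gaussian limits, the same tail bound guaranteeing convergence of the series in $D[0,1]$.
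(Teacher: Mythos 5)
Your high-level plan (depth-$K$ truncation, conditional CLT for the head, control the tail, Skorokhod modulus at the pivots) mirrors the paper's structure, and your block decomposition of the covariance is correct. But the critical technical step—the tail control—has genuine gaps.

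First, the asserted bound $\lim_{K\to\infty}\limsup_{n}\mathbb{E}\sup_\alpha|B^{(K)}_{\alpha,n}|^2=0$ does not follow from the Fill--Matterer $L^p$ estimates you cite: those are pointwise in $\alpha$ (one marginal at a time) and give no uniform control over the $2^k$ leaf intervals at level $k$. Second, and more importantly, a level-by-level second-moment estimate cannot work for the uniform bound: with the perturbed counts $W_{\phi,n}=(\widetilde S_{\phi,n}-nI_\phi)/\sqrt n$ one has $\mathbb{E}[W_{\phi,n}^2\mid I_\phi]=I_\phi(1-I_\phi)$, so $\sum_{|\phi|=k}\mathbb{E}[W_{\phi,n}^2]=1-(2/3)^k$ does \emph{not} decay in $k$; a Chebyshev union bound over the $2^k$ intervals therefore fails to sum over $k$. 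The paper gets around this with a Chernoff (sub-exponential) bound per interval (its Lemma~\ref{lem:lvlCosts}); that exponential tail is what makes the $2^k$ union bound summable, and even then one needs the cutoff $S_{\alpha,k,n}=0$ for $k>H_n\approx 4.31\log n$ (tree height) to kill the part of the Chernoff error term that grows with $k$ at fixed $n$. Your sketch does not mention the height cutoff at all, without which no moment-based sum over $k$ closes.

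Finally, you condition on $\mathcal F_K$ and use that the $U_i$ with $i>\max_{|\phi|\le K}\tau_\phi$ are i.i.d.\ uniform. This is workable for the head at fixed $K$ but leaves the interval counts entangled with $\mathcal F_\infty$ at deeper levels. The paper's key device is a perturbation $\widetilde U_i=L_{\phi_i}+I_{\phi_i}V_i$ (Lemma~\ref{lem_pert}) producing an i.i.d.\ uniform sequence \emph{independent of} $\mathcal F_\infty$ that differs from the true counts at level $k$ by at most $k$ (Lemma~\ref{lem:stilde}); this is what makes the conditional binomial/Chernoff machinery clean simultaneously at all levels. You would need something like this, or a different concentration argument (e.g.\ a fourth-moment bound, which the paper does use for the $\eps$-tame case, again combined with the height cutoff), to make the tail estimate rigorous.
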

%\begin{remark}  %% Diese Remark ist zu trivial für die W-Theoretiker
 % Note that a strength of a functional limit theorem such as Theorem \ref{mainTheorem} is its versatility implied by the (continuous) mapping theorem \cite[Theorem~2.7]{billing}: For any metric space $(M,\varrho)$ and any continuous function $h:D[0,1]\to M$  we  obtain the convergence $h(G_n)\to h(G_\infty)$  in distribution. This even holds for discontinuous (measurable) functions $h$ if the set $D_h$ of discontinuities of $h$ satisfies $\Prob(G_\infty\in D_h)=0$. Examples include the maximum (or minimum) of the process, i.e., we have for the worst case fluctuation  $\max_{\alpha}G_{\alpha,n} \to \max_{\alpha}G_{\alpha,\infty}$ in distribution. Also projections to one (or multiple) points, i.e.,  $G_{\alpha,n} \to G_{\alpha,\infty}$ for  $\alpha\in[0,1]$, cf.~also Lemma \ref{lem:fdd}. Furthermore, for a random index $V$ with arbitrary probability distribution on $[0,1]$ we obtain $G_{V,n}\to G_{V,\infty}$ in distribution.
%\end{remark}
\begin{remark}
An alternative representation of the random covariance function in \eqref{def:sigma:inf} is as follows: With a random variable $V$ uniformly distributed over $[0,1]$ and  independent of the $(U_i)_{i\in\N}$, we have
\begin{align}\label{cov_desc}
    \Sigma_{\infty, \alpha, \beta} = \Cov\kl[\big]{J(V,\alpha), J(V,\beta) \:\big\vert\: \mathcal F_\infty},
\end{align}
with the $\sigma$-algebra
\begin{align}\label{def_f_inf}
 \mathcal F_\infty:=\sigma\mg[\big]{I_\phi\mid \phi\in\mg{0,1}^\ast}.
\end{align}
\end{remark}
\begin{remark} A related functional limit law for the complexity of Radix Selection, an algorithm to select ranks based on the bit expansions of the data, with a limiting Gaussian process with a covariance function related to \eqref{cov_desc} can be found in \cite[Theorem 1.2]{lenesu19}. See \cite[Theorem 1.1]{drnesu14} for another related functional limit law.
\end{remark}
\begin{remark}
In the recent preprint Fill and Matterer \cite[Theorems 5.1 and 6.1.]{fill:quickselect-residual}  convergence of the one-dimensional marginals for the functional limit law in Theorem \ref{mainTheorem} is shown, in distribution and with all moments. See also the PhD thesis, Matterer \cite[Theorem~6.4]{Matterer}.
\end{remark}

To transfer Theorem \ref{mainTheorem} to  $\FIND$ we need to align jumps to come up with a suitable fluctuation process. The conventions $C_n(0):=C_n(1)$ and  $C_n(n+1):=C_n(n)$ are used.

%a natural embedding of the $C_n(k)$ by means of the $\mathrm{QuickVal}$ complexity. It is easy to see that

%Note that if there are $k$ elements within $(U_1,\ldots,U_n)$ being smaller than $\alpha$, then we proceed along the same path as for $\FIND$ if we were looking for rank $k$. Thus, the number of key comparisons required by $\mathrm{QuickVal}((U_1,\ldots,U_n),\alpha)$ is given by
%\begin{equation}\label{qs:to:qv}
%    S_{\alpha,n} := S_{n}(\abs{\mg{U_i<\alpha:1≤i≤n}}), \quad \alpha\in[0,1].
%\end{equation}
%The difference between the two processes $S_{\alpha,n}$ and $S^{QS}_{\floor{\alpha /n},n}$ is just composition with the empirical distribution function $\widehat F_n(\alpha):=\frac1n \abs{\mg{U_i<\alpha:1≤i≤n}}$ or a continuous function $\widetilde F_n(\alpha)$ with $n\inv\floor[\big]{n\widetilde F_n(\alpha)}=\widehat F_n(\alpha)$, which converges uniformly and almost surely to the identity per the Glivenko-Cantelli Theorem.
%For all function sequences $\phi_n:\RR\to\RR$, $\phi_n(S_{\alpha,n})$ and $\phi_n(S^{QS}_{\floor{\alpha /n},n})$ thus have the same weak Skorokhod limit if any, and we can focus on QuickVal.

%Since Gr\"ubel and R\"osler considered convergence in distribution of their $\FIND$ process they did not need to have the processes $X_n$ defined on a joint probability space. However, when studying a fluctuation result such an embedding seems necessary. This can be achieved, e.g., by choosing always the first element of a sublist as the pivot element and preserving the order of indices within the sublists.

% Ränge 0,1,...,n <-> U_1,\dots, U_n
\begin{corollary}\label{cor:qs}
    Let $C_n(k)$ be the number of key comparisons required to select rank $1\le k\le n$ within a set of $n$ data by $\FIND$ with the natural coupling \eqref{qs:to:qv}.
    %finding the rank $\ceil{t n}$.
    Let $\Lambda_n:[0,1] \to [0,1]$, $n\in\NN$, be any (random)  monotone increasing bijective function such that
    $\Lambda_n\kl{\frac k{n+1}}$ is equal to the element of rank $k$ within  $\{U_1,\dots, U_n\}$. Then we have
    \begin{align*}
        \left(\frac{C_n(\floor{t (n+1)}) - nS_{\Lambda_n(t)}}{\sqrt{n}}\right)_{t\in[0,1]}   \longdto G_\infty \; \mbox{ in } (D[0,1],d_\mathrm{SK}),
    \end{align*}
 where $G_\infty$ is the process defined in Theorem \ref{mainTheorem}.
    % conditionally on $\mathcal F_\infty$ and thus also unconditionally.
\end{corollary}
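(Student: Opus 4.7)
The plan is to reduce Corollary~\ref{cor:qs} to Theorem~\ref{mainTheorem} via the natural coupling \eqref{qs:to:qv} together with a random time-change argument. Under the natural coupling, one has pathwise $S_{\alpha,n}=C_n(|\{U_i\le\alpha:1\le i\le n\}|)$ for every $\alpha\in[0,1]$. For $t\in[k/(n+1),(k+1)/(n+1))$, monotonicity of $\Lambda_n$ combined with $\Lambda_n(k/(n+1))=U_{(k)}$, the $k$th order statistic of $(U_1,\ldots,U_n)$, forces $\Lambda_n(t)\in[U_{(k)},U_{(k+1)})$, and hence $|\{U_i\le\Lambda_n(t)\}|=k=\floor{t(n+1)}$. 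Consequently $C_n(\floor{t(n+1)})=S_{\Lambda_n(t),n}$ almost surely, so that the fluctuation process of the corollary coincides with $(G_{\Lambda_n(t),n})_{t\in[0,1]}=G_n\circ\Lambda_n$.

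Next I would show that $\|\Lambda_n-\mathrm{id}\|_\infty\to 0$ almost surely. By the Glivenko--Cantelli theorem for uniform order statistics (with the conventions $U_{(0)}=0$, $U_{(n+1)}=1$), one has $\max_{0\le k\le n+1}|U_{(k)}-k/(n+1)|\to 0$ almost surely. Since $\Lambda_n$ is a monotone bijection interpolating the points $(k/(n+1),U_{(k)})$, for any $t\in[k/(n+1),(k+1)/(n+1)]$ the value $\Lambda_n(t)$ lies between $U_{(k)}$ and $U_{(k+1)}$, so
\begin{equation*}
\|\Lambda_n-\mathrm{id}\|_\infty\le \max_{0\le k\le n+1}\left|U_{(k)}-\frac{k}{n+1}\right|+\frac{1}{n+1}\longrightarrow 0\quad\text{a.s.,}
\end{equation*}
regardless of the particular interpolation used.

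Finally, in Billingsley's form of the Skorokhod metric, the choice of time change $\mu=\Lambda_n^{-1}$ (itself a continuous monotone bijection) yields
\begin{equation*}
d_{\mathrm{SK}}(G_n\circ\Lambda_n,G_n)\le\|\Lambda_n^{-1}-\mathrm{id}\|_\infty\longrightarrow 0\quad\text{in probability,}
\end{equation*}
using that uniform convergence of $\Lambda_n$ to the identity carries over to $\Lambda_n^{-1}$. Combining this with $G_n\longdto G_\infty$ from Theorem~\ref{mainTheorem} and a standard Slutsky-type result for metric-space-valued random variables gives $G_n\circ\Lambda_n\longdto G_\infty$ in $(D[0,1],d_{\mathrm{SK}})$. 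All substantive content sits in Theorem~\ref{mainTheorem}; the only delicate point is that the argument must be invariant under the non-unique choice of interpolating $\Lambda_n$, which is precisely what the uniform bound on $\|\Lambda_n-\mathrm{id}\|_\infty$ ensures.
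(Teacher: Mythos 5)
Your proof is correct and follows essentially the same route as the paper: verify the pathwise equality $C_n(\floor{t(n+1)})=S_{\Lambda_n(t),n}$ via the natural coupling, recognize the corollary's process as $G_n\circ\Lambda_n$, bound the Skorokhod distance by $\|\Lambda_n^{-1}-\mathrm{id}\|_\infty$, and conclude by Glivenko--Cantelli and Slutsky. The only cosmetic difference is that the paper phrases the argument through $\widetilde F_n:=\Lambda_n^{-1}$ and the identity $\floor{(n+1)\widetilde F_n(\alpha)}=|\{U_i\le\alpha\}|$ rather than working directly on the intervals $[k/(n+1),(k+1)/(n+1))$.
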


\subsubsection{Number of swaps} \label{subsec_swap}
Usually, QuickSelect is implemented in-place, which means that it only requires the memory for the list $S$ of values and a bounded amount of additional memory. This is achieved by generating the sets $S_<$ and $S_\ge$ by swapping values within the ordered list that contains the elements of $S$ so that the elements of $S_<$ and $S_\ge$ are moved to contiguous parts of the original list. Such a procedure is called \textit{partition}. There are various \textit{partition} procedures, we are discussing two of them.

The original \textit{partition} procedure by Hoare \cite{hoarePartition} searches the list $S$ from both ends at once: It repeatedly finds the index $i = \min\mg{2≤i≤n\mid U_i>U_1}$ of the leftmost element bigger than the pivot and the index $j:=\max\mg{2≤j≤n\mid U_j<U_1}$ of the rightmost element smaller than the pivot. If $i<j$, it swaps $U_i$ and $U_j$. Else the algorithm terminates.

A simpler, but less efficient implementation is the so-called Lomuto partition scheme  \cite{bentley00,coleri,mahmoud_swaps} that only searches from one end of $S$. It keeps track of the amount $i$ of elements at the start of the list it has already swapped. In every step, it finds the index $j:=\max\mg{2≤j≤n\mid U_j<U_1}$ of the rightmost element smaller than the pivot. If $i+1<j$, it swaps $U_{i+1}$ and $U_{j}$ and increases $i$ by one. Otherwise, the algorithm terminates.

% Originally, there is also a step to swap the pivot to the right position, but this is first unnecessary for QuickSelect and second at most one swap per one of the $\Theta(\log n)$ levels.
Both partition schemes only compare elements to the pivot, so the model of randomness is preserved within the sublists $S_<$ and $S_\ge$. However, their original order is not preserved, so QuickSelect run on $U_1, \dots, U_n$ will usually not select the same pivots as QuickSelect on $U_1,\dots, U_{n+1}$. For convenience, we assume that the pivot to split a sublist $S'$ of $S$ is the element of $S'$ that came first in the original list $S$. We call this choice of the pivots a \textit{suitable embedding}. \\
%For every fixed $n$, this has the same distribution as if the elements were reordered.

\noindent
\textit{ (i) Hoare's partition.}
For Hoare's partition procedure, via a hypergeometric distribution, for details see Section \ref{Sec:swaps}, the expected number of swaps in step $k$ given $\mathcal F_\infty$ is approximately $nI_{\alpha,k+1}(I_{\alpha}-I_{\alpha,k}) / I_{\alpha,k}$, which leads to the  limit process $L=(L_\alpha)_{\alpha\in[0,1]}$ given by
\begin{equation}\label{hoare:limit}
    L_{\alpha} := \sum_{k=0}^\infty \frac{I_{\alpha,k+1}(I_{\alpha}-I_{\alpha,k})}{I_{\alpha,k}},\quad \alpha\in[0,1].
\end{equation}
With the  $Z_\phi$ appearing in Theorem \ref{mainTheorem} and $\{Y_\phi\,|\, \phi\in\mg{0,1}^\ast\}$ a set of i.i.d.\ $\mathcal{N}(0,1)$ random variables being independent of $\{Z_\phi\,|\, \phi\in\mg{0,1}^\ast\}$ and of $\mathcal F_\infty$ the limiting process $G\swap=(G\swap_{\alpha})_{\alpha\in[0,1]}$ is given by
\begin{equation}\label{defi:gswap}
    G\swap_{\alpha} := \;\sum_{\mathclap{\phi\in\{\phi(k,\alpha)\,|\, k\in\N_0\}}}\quad
      Y_\phi\frac{I_{\phi0}I_{\phi1}}{I_{\phi}^{3/2}}
    + Z_{\phi0}\frac{I_{\phi1}}{I_{\phi}}
    + Z_{\phi1}\frac{I_{\phi0}}{I_{\phi}}
    - Z_{\phi}\frac{I_{\phi0}I_{\phi1}}{I_{\phi}^2}, \quad \alpha\in[0,1].
\end{equation}
The $Y_\phi$ represent fluctuations caused by the hypergeometric distribution related to the pre\-sent par\-ti\-tion procedure, while the terms containing $Z_\phi$ represent fluctuations around the limit \eqref{hoare:limit}.
Then we have the following result for key exchanges corresponding to Theorem \ref{mainTheorem}:
\begin{theorem}\label{satz:swap}
    Let $K_{\alpha,n}$ be the number of key exchanges required by $\mathrm{QuickVal}((U_1, \dots, U_n),\allowbreak\alpha)$ with Hoare's partition algorithm in a suitable embedding. Then, as $n\to\infty$, we have
\begin{align*}
    \left(\frac{K_{\alpha,n}-nL_{\alpha}}{\sqrt n}\right)_{\alpha\in[0,1]} \longdto G\swap \quad \mbox{in } (D[0,1],d_\mathrm{SK}).
 \end{align*}
    Furthermore, the latter convergence also holds jointly with the convergence for the comparisons,
\begin{align*}
\left(\frac{S_{\alpha,n} - nS_\alpha}{\sqrt{n}}\right)_{\alpha\in[0,1]}\longdto G_{\infty}  \quad \mbox{in } (D[0,1],d_\mathrm{SK}),
\end{align*}
from \Cref{mainTheorem}, when $G\swap$ and $G_\infty$ are constructed as in \eqref{defi:gswap} and \eqref{ginf-explizit} respectively using the same $Z_\phi$.
\end{theorem}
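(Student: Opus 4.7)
The plan is to decompose the total number of swaps along the path of sublists visited by $\mathrm{QuickVal}$. Writing $K_{\phi,n}$ for the number of swaps Hoare's partition performs on the sublist indexed by $\phi\in\mg{0,1}^\ast$ (zero if that sublist is never generated), we have $K_{\alpha,n}=\sum_{k\ge 0}K_{\phi(\alpha,k),n}$. With $N_\phi:=|\{i\le n : U_i\in[L_\phi,R_\phi)\}|$, conditional on $\mathcal F_\infty$ and on $(N_{\phi 0},N_{\phi 1})$ the non-pivot entries of the sublist are in uniformly random order, so $K_{\phi,n}$ is a hypergeometric count with conditional mean $N_{\phi 0}N_{\phi 1}/(N_\phi-1)$ and conditional variance of order $(N_{\phi 0}N_{\phi 1})^2/N_\phi^3$. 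Combined with $N_\phi/n\to I_\phi$, summing the conditional means along the path recovers $nL_\alpha$ with $L_\alpha$ as in \eqref{hoare:limit}.

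First I would split each summand into two pieces: a \emph{hypergeometric} part $H_{\phi,n}:=K_{\phi,n}-\mathbb E[K_{\phi,n}\mid N_{\phi 0},N_{\phi 1}]$ whose conditional variance is asymptotic to $n(I_{\phi 0}I_{\phi 1}/I_\phi^{3/2})^2$, matching the coefficient of $Y_\phi$ in \eqref{defi:gswap}, and a \emph{smoothness} part $\mathbb E[K_{\phi,n}\mid N_{\phi 0},N_{\phi 1}]-nI_{\phi 0}I_{\phi 1}/I_\phi$. The latter is a Taylor expansion of $(x,y)\mapsto xy/(x+y)$ around $(I_{\phi 0},I_{\phi 1})$ applied to $(N_{\phi 0}/n,N_{\phi 1}/n)$; after division by $\sqrt n$ and using that $\sqrt n\,(N_{\phi c}/n-I_{\phi c})$ is precisely what drives the Gaussian increments encoded by the $Z_\phi$'s of \Cref{mainTheorem} (see Section \ref{sec:lim_rep}), the partial derivatives of $xy/(x+y)$ produce the prefactors $I_{\phi 1}/I_\phi$, $I_{\phi 0}/I_\phi$ and $-I_{\phi 0}I_{\phi 1}/I_\phi^2$ multiplying $Z_{\phi 0}$, $Z_{\phi 1}$ and $-Z_\phi$ in \eqref{defi:gswap}.

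For finite-dimensional convergence I would fix $\alpha_1,\dots,\alpha_m$ and truncate both sums at some depth $K$. For each fixed depth the $H_{\phi,n}$ associated with distinct nodes act on disjoint blocks of data, so they are conditionally independent given $\mathcal F_\infty$; the conditional CLT produces a centered Gaussian limit whose covariance is diagonal with entries $(I_{\phi 0}I_{\phi 1}/I_\phi^{3/2})^2$, and hence admits the representation through i.i.d.\ $\mathcal N(0,1)$ variables $Y_\phi$ independent of $\mathcal F_\infty$ and of the $Z_\phi$. The smoothness pieces converge jointly with the comparison process of \Cref{mainTheorem} because they are continuous linear functionals of the same $Z_\phi$, which simultaneously gives the joint convergence claim at the end of the theorem. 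Sending $K\to\infty$ uses geometric decay in $k$ of $\sum_{|\phi|=k}\mathbb E[I_{\phi 0}I_{\phi 1}/I_\phi]$ to control the truncated tail in $L^2$.

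The main obstacle will be tightness in $(D[0,1],d_\mathrm{SK})$. I plan to follow the strategy used for $G_n\longdto G_\infty$ in \Cref{mainTheorem}: the shallow-level truncation has only finitely many jumps (located at the $U_{\tau_\phi}$) and is tight by fdd convergence, while the deep-level remainder is shown to be negligible in sup norm. The new ingredient specific to the hypergeometric layer is a uniform bound on $\max_{|\phi|=k}|H_{\phi,n}|/\sqrt n$; since the typical $I_\phi$ at depth $k$ is exponentially small, Bernstein-type tail bounds for centered hypergeometric variables together with a union bound over the at most $2^k$ nodes should make this maximum $o(1)$ in probability, uniformly as $k$ grows at a suitable rate.
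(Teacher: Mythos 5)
Your decomposition matches the paper's: split each level's contribution into a hypergeometric fluctuation piece (producing the $Y_\phi$) and a Taylor-expansion-driven smoothness piece (producing the $Z_\phi$ terms), then truncate at a fixed depth for fdd convergence and bound the deep tail by union bounds over the $\sim 2^k$ nodes plus exponential interval decay (cf.\ Lemmas~\ref{lem:swap:clt} and \ref{lem:swap:lvlCosts}). The overall architecture and the limiting objects are identified correctly, and the joint convergence with the comparison process via shared $Z_\phi$'s is the right observation.

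However, there is a genuine gap in the smoothness piece. You write the conditional CLT as if $\sqrt{n}(N_{\phi c}/n - I_{\phi c})$ given $\mathcal F_\infty$ converges to $Z_{\phi c}$, i.e.\ as if the raw counts $N_\phi = |\{i\le n : U_i \in [L_\phi,R_\phi)\}|$ were conditionally binomial given $\mathcal F_\infty$. They are not: the interval endpoints $L_\phi, R_\phi$ are functions of the pivots $U_{\tau_\psi}$, which are themselves among the $U_i$, so the data and the interval lengths are mutually dependent. This is precisely the obstacle that the paper's perturbation construction (Section~\ref{Sec:pert}) is designed to remove: the $\widetilde U_i$ are built from fresh uniforms $V_i$ so that, by Lemma~\ref{lem_pert}, the sequence $(\widetilde U_i)$ is independent of $\mathcal F_\infty$, making the perturbed counts $\widetilde S_{\phi,n}$ exactly binomial given $\mathcal F_\infty$ (Lemma~\ref{lem:stilde}), with a discrepancy $|S_{\phi,n} - (\widetilde S_{\phi,n}-1)^+| \le |\phi|$ that is uniformly $o(\sqrt n)$ up to depth $O(\log n)$. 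Correspondingly, the swap count $K_{\phi,n}$ must be coupled with a perturbed version $\widetilde K_{\phi,n}$ (as in \eqref{def:ktilde}), again at distance $\le |\phi|+1$. Without this step, neither the conditional CLT for the smoothness piece nor the joint convergence with $G_\infty$ is justified, since both rely on the $Z_\phi$ being conditionally Gaussian increments of a process that is binomial given $\mathcal F_\infty$. You should either invoke the perturbation argument explicitly or supply an equivalent de-biasing of $N_\phi$; once that is in place, the rest of your plan — including the Bernstein/Chernoff tail bounds at depth $k$ and the cutoff at $k = O(\log n)$ using the binary search tree height — runs as in the paper.
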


\noindent
\textit{ (ii) Lomuto's partition.}

The Lomuto partition procedure is simpler to implement and much easier to analyze. It swaps the pivot and every element smaller than the pivot, so the amount of swaps at some path $\phi\in\mg{0,1}^*$ is given by $S_{\phi0}+1$. Thus, the limiting process $ G^\mathrm{Lo}= (G^\mathrm{Lo}_{\alpha})_{\alpha\in[0,1]}$ is
 \begin{equation*}
G^\mathrm{Lo}_{\alpha}=\sum_{k=0}^\infty Z_{\phi(\alpha,k)0},\quad \alpha\in [0,1].
 \end{equation*}
Our proof for the number of key comparisons in Theorem \ref{mainTheorem} can be straightforwardly transferred to yield:
\begin{theorem}\label{thm:lomuto}
    Let $K_{\alpha,n}^\mathrm{Lo}$ be the number of key exchanges required by  $\mathrm{QuickVal}((U_1, \dots, U_n),\alpha)$ with Lomuto's partition procedure in a suitable embedding. Then, as $n\to\infty$, we have
  \begin{equation}
    \kl*{
        \frac{K_{\alpha,n}^\mathrm{Lo}
        -n\sum_{k=0}^\infty I_{\phi(\alpha,k)0}}{\sqrt{n}}
    }_{\alpha\in[0,1]}
    \longdto G^\mathrm{Lo}
    \quad \mbox{in } (D[0,1],d_\mathrm{SK}).
 \end{equation}
\end{theorem}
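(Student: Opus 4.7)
The plan is to reduce the statement to the ingredients already established on the way to Theorem~\ref{mainTheorem} by recognising that the Lomuto swap count is, up to a negligible correction, an additive functional of the same binary search tree, with $S_{\phi,n}$ replaced by its left-child counterpart $S_{\phi0,n}$. Concretely, whenever $\mathrm{QuickVal}((U_1,\ldots,U_n),\alpha)$ processes a sublist sitting at a node $\phi(\alpha,k)$ and chooses its pivot, the Lomuto procedure performs one swap per element strictly below the pivot and one final swap to place the pivot itself, i.e.\ $S_{\phi(\alpha,k)0,n}+1$ swaps. Summing along the path gives
\begin{equation*}
    K^{\mathrm{Lo}}_{\alpha,n} = \sum_{k=0}^{\infty} S_{\phi(\alpha,k)0,n} + D_{\alpha,n},
\end{equation*}
where $D_{\alpha,n}$ counts the non-empty sublists visited. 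Since $\sup_{\alpha\in[0,1]} D_{\alpha,n}$ is bounded by the height of the binary search tree built from $U_1,\ldots,U_n$, which is $O(\log n)$ with all moments, the $D_{\alpha,n}$-term contributes $o(1)$ after division by $\sqrt n$, uniformly in $\alpha$.

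Subtracting the centering $n\sum_{k}I_{\phi(\alpha,k)0}$ and dividing by $\sqrt n$ thus reduces the problem to showing
\begin{equation*}
    H_{\alpha,n}:=\sum_{k=0}^{\infty}\frac{S_{\phi(\alpha,k)0,n}-nI_{\phi(\alpha,k)0}}{\sqrt n}\longdto \sum_{k=0}^{\infty} Z_{\phi(\alpha,k)0}=G^{\mathrm{Lo}}_{\alpha}\quad\mbox{in }(D[0,1],d_\mathrm{SK}).
\end{equation*}
The proof of Theorem~\ref{mainTheorem} supplies exactly the needed joint convergence of the family of normalised fluctuations $((S_{\phi,n}-nI_\phi)/\sqrt n)_{\phi\in\{0,1\}^\ast}$ towards the family $(Z_\phi)_{\phi\in\{0,1\}^\ast}$; restricting this convergence to the sub-family indexed by left children $\phi0$ and summing along the path $\phi(\alpha,\cdot)$ produces the finite-dimensional marginals of $G^{\mathrm{Lo}}$. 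The interchange of summation and limit is justified by a truncation argument: the tail
\begin{equation*}
    \sum_{k>K}\frac{S_{\phi(\alpha,k)0,n}-nI_{\phi(\alpha,k)0}}{\sqrt n}
\end{equation*}
has conditional (on $\mathcal F_\infty$) second moment controlled by $\sum_{k>K}I_{\phi(\alpha,k)0}$, which vanishes uniformly in $\alpha\in[0,1]$ as $K\to\infty$ by the geometric decay of subinterval lengths along any path of the binary search tree — precisely the same tail-control mechanism underlying the proof of Theorem~\ref{mainTheorem}.

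Tightness in $(D[0,1],d_\mathrm{SK})$ is the step I expect to require the most care, and it is where the proof of Theorem~\ref{mainTheorem} is re-used rather than merely transferred cosmetically. The jumps of $\alpha\mapsto H_{\alpha,n}$ occur exactly at the pivot values $U_{\tau_\phi}$, i.e.\ at the same locations as those of the comparison fluctuation process $G_n$ of \eqref{fluc_pro}, so the modulus-of-continuity estimates used to establish tightness of $G_n$ carry over once one replaces $I_\phi$ by $I_{\phi0}\le I_\phi$ throughout. The main obstacle is the bookkeeping required to verify that all the uniform-in-$\alpha$ moment and oscillation bounds survive the shift of indices from $\phi$ to $\phi0$; because the relevant quantities can only decrease under this shift, no new analytical difficulty should arise, and Theorem~\ref{thm:lomuto} follows.
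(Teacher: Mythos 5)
Your proposal is correct and follows exactly the route the paper intends: the paper gives no worked-out proof of \Cref{thm:lomuto} but merely asserts that the argument for \Cref{mainTheorem} transfers, and what you wrote is precisely that transfer. You correctly identify the swap count at node $\phi$ as $S_{\phi0,n}$ up to a bounded additive correction, absorb the $O(\log n)$ correction (bounded by the BST height) into an $o(1)$ term after dividing by $\sqrt n$, and reduce to the level-shifted analogue of $G_n$ with $\phi$ replaced by $\phi0$, which decreases all relevant interval lengths and so can only improve the estimates of Lemmas \ref{lem:k1}, \ref{lem:stilde} and \ref{lem:lvlCosts}.

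One phrase is slightly misleading: describing the uniform-in-$\alpha$ tail control as a ``conditional second moment'' argument understates what is actually used. A per-$\alpha$ second-moment bound does not by itself control $\sup_{\alpha}\abs{\sum_{k>K}(\cdot)}$; the mechanism in Proposition \ref{prop:rest-klein} is the Chernoff bound of \Cref{lem:lvlCosts} combined with a union bound over the $2^k$ nodes at level $k$ and the a.s.\ height bound $H_n = O(\log n)$, together with the perturbation device of \Cref{lem:stilde} to pass from $S$ to the conditionally binomial $\widetilde S$. Since you do explicitly invoke ``precisely the same tail-control mechanism underlying the proof of Theorem \ref{mainTheorem}'', this is a matter of wording rather than a substantive gap, but the second-moment framing should be replaced by a reference to that exponential concentration argument.
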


\subsubsection{$\eps$-tame cost functions} \label{subsec:bits}
We now consider the model where the cost to compare two keys depends on their values.  These costs are described by a measurable \textit{cost function} $\beta: [0,1]^2 \to [0,\infty)$, and we require that they have a polynomial tail, that is: There are constants $c$, $\eps>0$ such that for all $u\in[0,1], x \in \NN$ and for $V$ being $\unif[0,1]$ distributed
\begin{equation*}
    \Pk*{\beta(u,V)≥x} \le c x^{-1/\eps}.
\end{equation*}
This condition is called $(c,\eps)$\textit{-tameness}, see \cite{FillNakama, Matterer, fill:quickselect-residual}, and  $\beta$ is called $\eps$\textit{-tame} if it is $(c,\eps)$-tame for some $c>0$. Note that, e.g., $\beta$ counting the number of bit comparisons is   $\eps$-tame for all $\eps>0$.
The costs of $\mathrm{QuickVal}((U_1,\ldots,U_n),\alpha)$ in this model are given by
\begin{equation*}
    S^\beta_{\alpha,n} := \sum_{k=0}^\infty \sum_{\tau_{\alpha,k}<i≤n} \eins_{[L_{\alpha,k},R_{\alpha,k})}(U_i) \beta(U_{\tau_{\alpha,k}},U_i)
\end{equation*}
and the limit is, with $V$ being $\unif[0,1]$ distributed and independent of the $U_1,\dots, U_n$, given as
\begin{equation*}
    S^\beta_{\alpha,\infty} := \sum_{k=0}^\infty \Ek*{\eins_{[L_{\alpha,k},R_{\alpha,k})}(V) \beta(U_{\tau_{\alpha,k}},V) \given \mathcal F_\infty}
\end{equation*}
\cite{fill:quickselect-residual} show for $\eps < \frac12$ that for fixed $\alpha\in[0,1]$ the resulting residual
%textcite
\begin{equation*}
    G^\beta_{\alpha,n} := \frac{S^\beta_{\alpha,n}-nS^\beta_{\alpha,\infty}}{\sqrt n}
\end{equation*}
converges to a mixed centred Gaussian random variable $G^\beta_{\alpha,\infty}$ in distribution and with all moments. It is possible to combine them to a mixture of centred Gaussian processes
\begin{align}\label{mix_gau_g_beta}
G^\beta_{\infty}=(G^\beta_{\alpha,\infty})_{\alpha\in[0,1]},
\end{align}
defined by the conditional covariance functions given, with
\begin{equation}\label{def:xbeta}
    X^\beta_{\alpha,k} :=
    \eins_{[L_{\alpha,k},R_{\alpha,k})}(V)\cdot\beta(U_{\tau_{\alpha,k,n}},V),
\end{equation}
by
\begin{equation}\label{cov:beta}
    \Cov\kl[\big]{G^\beta_{\alpha,\infty},
    G^\beta_{\gamma,\infty} \mid \mathcal F_\infty}
    = \Cov\kl*{\sum_{k=0}^\infty X^\beta_{\alpha,k},
    \sum_{k=0}^\infty X^\beta_{\gamma,k}
    \:\big\vert\: \mathcal F_\infty},\quad \alpha,\gamma \in [0,1].
\end{equation}
We show later, in \Cref{lem:k1}, that the latter covariance is well-defined for $s<\eps\inv$.

Note that now the total cost for all key comparisons required by $\mathrm{QuickVal}((U_1,\ldots,U_n),\alpha)$ or  $\FIND((U_1,\ldots,U_n),k)$ is no longer determined by the fact that the ranks of $(U_1,\ldots,U_n)$ form an uniformly random permutation. Here, the distribution of the $U_i$ matters. We only consider the uniform distribution as for the other cost measures in the present paper and leave other distributions for future work. We have the following result corresponding to Theorem \ref{mainTheorem}.
\begin{theorem}[$\eps$-tame cost function] \label{thm:beta}
   Let $S^\beta_{\alpha,n}$ be the cost of $\mathrm{QuickVal}((U_1, \dots, U_n),\alpha)$ for an  $\eps$-tame cost function $\beta$ with $\eps<\frac14$. Then we have
    \begin{align*}
      \left(  \frac{S^\beta_{\alpha,n}-nS^\beta_{\alpha,\infty}}{\sqrt n}\right)_{\alpha\in[0,1]} \longdto  G^\beta_\infty  \quad \mbox{ in } (D[0,1],d_\mathrm{SK}),
    \end{align*}
where $G^\beta_\infty$ is the mixture of centred Gaussian processes defined in  \eqref{mix_gau_g_beta}.
\end{theorem}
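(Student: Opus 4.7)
The plan is to adapt the architecture of the proof of Theorem \ref{mainTheorem} to the weighted setting, replacing the unit-cost indicator $\mathbf{1}_{[L_\phi,R_\phi)}(U_i)$ by its $\beta$-weighted version $\mathbf{1}_{[L_\phi,R_\phi)}(U_i)\,\beta(U_{\tau_\phi},U_i)$, and rerunning the three-step scheme of depth-wise decomposition, finite-dimensional conditional central limit theorem, and tightness, with all moment estimates upgraded to accommodate the polynomial tails permitted by $\eps$-tameness.

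First I would centre layer by layer, writing
\[
\frac{S^\beta_{\alpha,n}-nS^\beta_{\alpha,\infty}}{\sqrt n} \;=\; \frac{1}{\sqrt n}\sum_{k=0}^{\infty}\Bigl(S^\beta_{\alpha,k,n} - n\,\mathbb{E}[X^\beta_{\alpha,k}\mid \mathcal F_\infty]\Bigr),
\]
where, conditionally on $\mathcal F_\infty$, each layer $S^\beta_{\alpha,k,n}$ is a sum of the order of $n I_{\alpha,k}$ i.i.d.\ copies of the weighted indicator $X^\beta_{\alpha,k}$ from \eqref{def:xbeta}. This exploits the fact that the tree structure $(I_\phi,L_\phi,R_\phi,\tau_\phi)$ is $\mathcal F_\infty$-measurable, so that within a fixed layer only the count of $U_i$'s in $[L_{\alpha,k},R_{\alpha,k})$ and their relative positions contribute randomness beyond $\mathcal F_\infty$.

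Next, for convergence of finite-dimensional distributions, fix $\alpha_1,\dots,\alpha_m\in[0,1]$ and a truncation depth $K$. Conditionally on $\mathcal F_\infty$, the truncated sum over $0\le k\le K$ is a centred triangular array to which a multivariate Lindeberg CLT applies, yielding a centred Gaussian vector with covariance equal to the truncation of \eqref{cov:beta} at level $K$; the Lindeberg condition requires only a uniform $(2+\delta)$-moment of $X^\beta_{\alpha,k}$, which $\eps$-tameness already secures for $\eps<\tfrac12$. To pass $K\to\infty$, I would use the conditional $L^2$ tail estimate
\[
\sum_{k>K}\operatorname{Var}\bigl(X^\beta_{\alpha,k}\mid \mathcal F_\infty\bigr) \longrightarrow 0 \quad\text{a.s.},
\]
which follows from \Cref{lem:k1} (finiteness of conditional $s$-moments for $s<\eps^{-1}$) combined with the a.s.\ geometric decay of the $I_{\alpha,k}$.

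The decisive, and main, obstacle is tightness in $(D[0,1],d_{\mathrm{SK}})$. I would invoke Billingsley's moment criterion for càdlàg processes, which reduces matters to an estimate of the form
\[
\mathbb{E}\bigl[|G^\beta_{\alpha,n}-G^\beta_{\beta,n}|^{2}\,|G^\beta_{\beta,n}-G^\beta_{\gamma,n}|^{2}\bigr] \;\le\; C\,\bigl(F(\gamma)-F(\alpha)\bigr)^{1+\delta}
\]
for $0\le\alpha\le\beta\le\gamma\le 1$, with $F$ continuous and nondecreasing. The increment $G^\beta_{\alpha,n}-G^\beta_{\beta,n}$ is supported on the symmetric difference of the two infinite paths $\phi(\alpha,\cdot)$ and $\phi(\beta,\cdot)$, and is, after normalization by $\sqrt n$, a centred sum of $\beta$-weighted indicators. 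Expanding the left-hand side and using the conditional independence across depths given $\mathcal F_\infty$ produces products of four such $\beta$-weights, so uniform fourth-moment control $\sup_{u\in[0,1]} \mathbb{E}[\beta(u,V)^{4}]<\infty$ is exactly what is needed; this is precisely what the hypothesis $\eps<\tfrac14$ provides via $(c,\eps)$-tameness, because then $1/\eps>4$. Coupled with the already established $L^2$-control of deep levels and the a.s.\ geometric decay of the $I_\phi$'s, this delivers the required $(1+\delta)$-bound. The finite-dimensional convergence together with this tightness then yields the claimed functional weak limit.
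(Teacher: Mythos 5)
Your decomposition into layers, the conditional multivariate CLT for the finite-dimensional distributions, and the use of the geometric decay of the $I_{\alpha,k}$ to pass $K\to\infty$ all match the structure of \Cref{lem:beta:clt} and the surrounding discussion, so the fdd part of your proposal is essentially the paper's. The tightness argument, however, takes a genuinely different route, and it is there that your proposal has a real gap. The paper does not use Billingsley's product-of-increments moment criterion at all; instead it uses the tailored criterion of \Cref{lem:convergence}, which reduces tightness to showing that the sup-norm over $\alpha$ of the residual below a truncation level $K$ is uniformly small in probability (\Cref{prop:beta:prop:rest-klein}). That estimate is produced from the per-level $L^s$-moment bound of \Cref{lem:beta:snorm}, which explicitly carries the interval-length dependence $\Ek{|G^\beta_{\alpha,k,n}|^s\mid L_{\alpha,k},R_{\alpha,k}} = O(I_{\alpha,k}^{2-s\eps}n^{2-s/2} + k^{s/2}I_{\alpha,k}^{(1/2-\eps)s})$; the constraint $\eps<\tfrac14$ then enters because one simultaneously needs $s\ge 4$ (to kill the $n^{2-s/2}$ factor), $s<\eps^{-1}$, and $s\ge \tfrac{2}{1-2\eps}$, and $s=4$ is the choice that satisfies all three precisely when $\eps<\tfrac14$.

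Your proposed chaining bound
$\Ek{|G^\beta_{\alpha,n}-G^\beta_{\beta,n}|^{2}|G^\beta_{\beta,n}-G^\beta_{\gamma,n}|^{2}}\le C(F(\gamma)-F(\alpha))^{1+\delta}$
is asserted but not established, and establishing it is not a routine corollary of ``uniform fourth moments of $\beta$.'' First, finiteness of $\sup_u \Ek{\beta(u,V)^4}$ is not the relevant quantity; what one needs is the refined bound $\Ek{(X^\beta_{\alpha,k})^4\mid\mathcal F_\infty}=O(I_{\alpha,k}^{1-4\eps})$ of \Cref{lem:beta:norm}, which decays with the interval length and is what makes deeper levels controllable. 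Second, and more fundamentally, the expectation in the chaining bound also averages over the random binary-search-tree structure that determines $J(\alpha,\beta)$ and $J(\beta,\gamma)$, and the exponent on $F(\gamma)-F(\alpha)$ must emerge from a two-parameter sum over divergence levels: the probability that the two pairs diverge at levels $(k_1,k_2)$ times the conditional product moment, summed over all $k_1,k_2$. This level-by-level accounting is nontrivial, is not done in your sketch, and is exactly the kind of work that the paper's \Cref{lem:convergence} is designed to avoid by replacing a moment modulus in the Euclidean metric with an approximation-by-step-functions criterion adapted to the tree topology. Without carrying out that averaging, your tightness step is a plausible plan but not a proof.
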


\subsubsection{Path properties of the limit processes}\label{sec:path}
It is obvious from the covariance function that the limit process $G_\infty$ is a.s.\ discontinuous at every $U_i$, $i\in\NN$, within the Euclidean metric. We equip $[0,1]$ with two equivalent (random) metrics $d_2$ and $d_G$, which are $\mathcal F_\infty$-measurable, and show that $G_\infty$ is Hölder continuous in these metrics.

The first metric we consider is
\begin{equation}\label{defi:d2}
    d_2(\alpha,\beta) := 2^{-J(\alpha,\beta)}
\end{equation}
with $J(\alpha,\beta)$ defined in Theorem \ref{mainTheorem}. Then,  $d_2$ is an ultrametric (i.e. the longest two sides of a triangle have the same length) and the balls in this metric are the intervals $[L_{\phi}, R_\phi)$ resp. $[L_\phi,R_\phi]$ in the case of $R_\phi=1$. While we could use another base than 2, this choice  is the natural choice in which the Hausdorff dimension of $([0,1],d_2)$ is one. A function defined on $([0,1],d_2)$ is continuous if and only if it is càdlàg and continuous in the Euclidean metric at all points outside the set $\mg{U_\phi\mid\phi\in\mg{0,1}^\ast}$. Consequently, the topology induced by $d_2$ is coarser than the Euclidean topology on $[0,1]$.

The second metric we consider is the canonical metric of the (mixed) Gaussian process $G\comp_\infty$ given by
\begin{equation}\label{def:dg}
    d_G(\alpha,\beta) := \Ek[\big]{\kl{G_{\alpha,\infty}\comp-G\comp_{\beta,\infty}}^2\given\mathcal F_\infty}^{\frac12}.
\end{equation}
We show in \Cref{lem:d2dg-equivalent} that the metrics $d_2$ and $d_G$ are a.s.~equivalent.

\begin{theorem}\label{satz:hölder}
For the limit processes $G_\infty\comp$, $G_\infty\swap$ and  $G^\beta_\infty$ appearing in Theorems \ref{mainTheorem}, \ref{satz:swap} and \ref{thm:beta} we have:
\begin{enumerate}
    \item The processes $G_\infty\comp$ and $G_\infty\swap$ are a.s.\ Hölder continuous with respect to $d_2$ for any Hölder exponent $\gamma < \frac14\log_2\frac32\approx 0.1462$ and Hölder continuous with regard to $d_G$ for any Hölder exponent $\gamma<1$.
    \item For an $\eps$-tame comparison cost function $\beta$ with $\eps<1/4$, the residual process $G^\beta_\infty$ is a.s.\ $d_2$-Hölder continuous with Hölder exponent $\gamma < (1-2\eps)\frac14\log_2\frac32$ and $d_G$-Hölder continuous with Hölder exponent $\gamma < 1-2\eps$.
\end{enumerate}\end{theorem}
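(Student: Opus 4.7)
The plan is to reduce the problem to a quantitative comparison of the canonical metric $d_G$ with the ultrametric $d_2$, and then combine this comparison with standard Gaussian chaining arguments. Conditional on $\mathcal F_\infty$, each of the three processes is a centred Gaussian process with canonical metric $d_G$, so Fernique's theorem on the modulus of continuity makes such a process $\gamma$-Hölder in its canonical metric for every $\gamma<1$, provided the metric entropy $N(\epsilon,d_G)$ grows polynomially in $1/\epsilon$. The latter is immediate from $N(2^{-k},d_2)\le 2^k$ (since $d_2$-balls of radius $2^{-k}$ are precisely the $\phi$-intervals of depth $k$) combined with the equivalence $d_2\sim d_G$ of \Cref{lem:d2dg-equivalent}. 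For $G^\beta_\infty$ the same strategy applies, but only moments of $\beta(U,V)$ of order $<1/\varepsilon$ are available, which forces a truncation in the Kolmogorov/Fernique estimate and yields the advertised $d_G$-Hölder exponent $\gamma<1-2\varepsilon$ instead of $\gamma<1$.

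The $d_2$-Hölder claim then reduces to the quantitative comparison
\[
 d_G(\alpha,\beta) \;\le\; C(\omega)\cdot d_2(\alpha,\beta)^{\frac14\log_2\frac32}, \qquad \alpha,\beta\in[0,1],
\]
with an a.s.\ finite $C(\omega)$; combined with the $d_G$-Hölder exponent $1-\delta$, this gives the claimed $d_2$-Hölder exponent. To prove it I start from the representation \eqref{cov_desc}, so that $d_G(\alpha,\beta)^2 = \mathrm{Var}(J(V,\alpha)-J(V,\beta)\mid\mathcal F_\infty)$. Setting $J:=J(\alpha,\beta)$, the event $\{J(V,\alpha)\le J\}$ forces $J(V,\alpha)=J(V,\beta)$, so the variance is supported on the disjoint events that $V$ lies in $[L_{\phi(\alpha,J+1)},R_{\phi(\alpha,J+1)})$ or in $[L_{\phi(\beta,J+1)},R_{\phi(\beta,J+1)})$. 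A direct computation using $\mathbb{P}(J(V,\alpha)\ge J+m\mid\mathcal F_\infty)=I_{\alpha,J+m}$ yields the deterministic upper bound
\[
 d_G(\alpha,\beta)^2 \;\le\; 2\sum_{j\ge 1}(2j-1)\bigl(I_{\alpha,J+j}+I_{\beta,J+j}\bigr).
\]

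The main tool to bound the right-hand side is the BST moment identity
\[
 \mathbb{E}\!\left[\sum_{|\phi|=k} I_\phi^s\right] \;=\; \left(\frac{2}{s+1}\right)^{\!k},
\]
obtained by induction on $k$ from the fact that each split ratio $I_{\psi 0}/I_\psi$ is i.i.d.\ uniform on $[0,1]$. With $s=2$ it gives $(2/3)^k$; Markov's inequality and Borel--Cantelli applied level by level then deliver $\max_{|\phi|=k} I_\phi \le (2/3)^{k(1-\eta)/2}$ a.s.\ for every $\eta>0$ and all $k$ sufficiently large. The tail $\sum_{j\ge 2}(2j-1)I_{\alpha,J+j}/I_{\alpha,J+1}$ is, by the self-similarity of the sub-BST rooted at $\phi(\alpha,J+1)$, independent of the past and has finite expectation (again via the $s=2$ identity), so it contributes only to the random constant. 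Collecting, $d_G(\alpha,\beta)^2 \le C(\omega)(2/3)^{J(1-\eta)/2}$, which rearranges into the required comparison since $d_2(\alpha,\beta)=2^{-J}$.

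The extensions to the other two processes follow the same blueprint. For $G^\swap_\infty$ the additional independent $Y_\phi$-contributions to the conditional variance are scaled by $I_{\phi 0}I_{\phi 1}/I_\phi^{3/2}$, whose square is at most $I_\phi/16$ by AM--GM, and summed along the remaining paths they are again dominated by $C(\omega) I_{\alpha,J+1}$, producing the same exponent. For $G^\beta_\infty$ the conditional variance of $X^\beta_{\alpha,k}$ is controlled via only the moments of $\beta(U,V)$ of order $<1/\varepsilon$, which by Cauchy--Schwarz effectively replaces $s=2$ in the BST identity by a slightly larger value; this accounts for the deterioration to the exponent $(1-2\varepsilon)\tfrac14\log_2\tfrac32$. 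The main technical obstacle, common to all three cases, is the uniformity-in-the-tree of the Borel--Cantelli step: one needs a maximal inequality strong enough to trade a polynomial correction $k^\eta$ for a small loss in the Hölder exponent, and for the $\beta$-case a careful bookkeeping to balance the tameness moment loss against the required branching decay.
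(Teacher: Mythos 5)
Your route is genuinely different from the paper's. The paper works directly with the explicit series representation $G_\infty = \sum_\phi Z_\phi \mathbf 1_{[L_\phi,R_\phi)}$ and level-wise Borel--Cantelli bounds (Lemmas \ref{lem:zphi-small}, \ref{lem:gphi-small}, \ref{lem:k1}, \ref{lem:relative-geometric}), producing a pointwise almost-sure estimate of $\abs{G_{\alpha,\infty}-G_{\beta,\infty}}$ for every pair $\alpha,\beta$ simultaneously; no chaining, entropy bound, or majorizing-measure argument ever appears. Your alternative via Fernique/Dudley is a recognizable standard Gaussian-process approach and would save the explicit series computation, but it requires a uniform covering-number estimate $N(\epsilon,d_G)\lesssim \epsilon^{-c}$, and that is \emph{not} immediate from the purely topological equivalence of $d_2$ and $d_G$ supplied by \Cref{lem:d2dg-equivalent}. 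You would need a quantitative two-sided comparison (essentially \eqref{di-formula} and \eqref{di-bound}) relating $\phi$-interval lengths to $d_G$-diameters, plus a count of the antichain of maximal $\phi$'s with $I_\phi\gtrsim\epsilon^2$, before the entropy integral is controlled. Your quantitative comparison $d_G\le C(\omega)\,d_2^{\frac14\log_2\frac32}$ also cannot hold with a finite constant at exactly that exponent: \Cref{lem:k1} gives $I_{\alpha,J}\le J\fracc23^{J/2}$, so the comparison carries a factor $\sqrt J=\sqrt{\log_2(1/d_2)}$; you acknowledge this must be traded for a loss in exponent, and you should make that loss explicit, as the paper does with the $J^{1-\eps}$ and $J^{1.5}$ prefactors in \eqref{g-bound}--\eqref{g-geometric}.

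There is, however, a genuine conceptual gap in your explanation of the $(1-2\eps)$ degradation for $G^\beta_\infty$. You attribute it to the polynomial tail of $\beta(U,V)$ ``forcing a truncation in the Kolmogorov/Fernique estimate''. That is not the mechanism: conditional on $\mathcal F_\infty$, $G^\beta_\infty$ is a centred \emph{Gaussian} process by construction \eqref{mix_gau_g_beta}, so it has all moments and the Gaussian chaining machinery applies without any truncation. The actual source of the loss is that $d_G$ in \eqref{def:dg} is, by definition, the canonical metric of the \emph{comparison} process $G\comp_\infty$, not of $G^\beta_\infty$. The canonical metric $d_{G^\beta}$ of $G^\beta_\infty$ is comparable to $d_G^{1-2\eps}$, because the conditional variances satisfy $\Var\kl{G^\beta_{\phi,\infty}\mid\mathcal F_\infty}=\O\kl{I_\phi^{1-2\eps}}$ instead of $\asymp I_\phi$ (this is \Cref{lem:beta:norm} together with \Cref{lem:gphi-small}(\ref{lem:cont:3})). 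Chaining gives $\gamma<1$ in $d_{G^\beta}$, which then converts to $\gamma<1-2\eps$ in $d_G$. If you proceed as written, treating $d_G$ as the canonical metric of $G^\beta_\infty$ and looking for the loss in the chaining step, the argument does not close and the asserted exponent does not come out.
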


\section{Proofs}
The present section is organized as follows: Section \ref{sec:d01} contains an abstract criterion, Proposition \ref{lem:convergence}, for weak convergence of probability measures on $(D[0,1],d_\mathrm{SK})$. All the functional limit theorems in the present paper are obtained via Proposition \ref{lem:convergence}. To verify the conditions of Proposition \ref{lem:convergence} first in Section \ref{Sec:pert} a novel perturbation argument is introduced, which is the basis of our analysis. Then in Section \ref{Sec:proof} estimates are presented to use Proposition \ref{lem:convergence} to prove the main result, Theorem \ref{mainTheorem}, on the number of key comparisons. Section \ref{sec:lim_rep} is an intermezzo where the explicit representation of the limit process appearing in Theorem \ref{mainTheorem} is added. Then, Sections \ref{Sec:swaps} and \ref{subsec:proof:bits} contain the estimates analogous to Section \ref{Sec:proof} to prove the main results on the number of swaps and on $\eps$-tame cost functions from Theorems \ref{satz:swap} and \ref{thm:beta} respectively. The final Section \ref{sec:paths} contains the proof of the path properties of the limit processes stated in Theorem \ref{satz:hölder}.

%In Section \ref{Sec:swaps} further functional fluctuation results are stated for the number of swaps (key exchanges) required by  $\mathrm{QuickVal}$  (depending on the specific algorithm used to partition $S$ into the sublists $S_<$ and $S_\ge$) as well as  functional fluctuation results  for cost measures which are based on key comparisons, where the cost of a comparison may depend on the values of the keys.

\subsection{On Weak Convergence in \texorpdfstring{$D[0,1]$}{D[0,1]}} \label{sec:d01}

Within the space $D[0,1]$ of all càdlàg functions, closeness of functions $f,g \in D[0,1]$ is measured in the  Skorokhod metric
\begin{align}
    d_{\mathrm{SK}}(f,g) = \inf_\lambda \max \mg[\big]{\absnorm{f\circ\lambda-g},\,\absnorm{\lambda-\id}},
\end{align}
where the infimum is taken over all increasing bijections $\lambda:\:[0,1]\to[0,1]$ and  $\id$ denotes identity.

To obtain weak convergence on $(D[0,1],d_\mathrm{SK})$ we first show tightness so that our sequences of processes in Theorems  \ref{mainTheorem}, \ref{satz:swap} and \ref{thm:beta} have  weakly convergent subsequences respectively, and then show uniqueness of the limit. The tightness on $(D[0,1], d_{\mathrm{SK}})$ can be shown using an equivalent of the Arzelà--Ascoli theorem, whereas the uniqueness follows from the fdd convergence.

For some $x\in D[0,1]$ and $\delta>0$, we define a version of a  modulus of continuity by
\begin{equation}\label{def:modulus}
    w'_x(\delta) = \inf_{\mg{t_i}}\max_{0<i≤r}\sup_{t_{i-1}≤s,u<t_{i}}\abs{x(s)-x(u)}.
\end{equation}
The infimum is taken over finite sets $\mg{t_i}_{i=1}^r$ with
\begin{equation*}
    \begin{cases}
        0=t_0< \dots< t_r = 1 \text{ and}      \\
        t_i - t_{i-1} > \delta \quad \text{ for } i=1,2,\dots, r.
    \end{cases}
\end{equation*}
We start recalling a well-known tightness criterion:
\begin{theorem}[Theorem 13.2 in \cite{billing}]\label{lem:tightness_appendix}
    Let $\mg{P_n}_{\ninn}$ be a sequence of probability measures on $(D[0,1],d_\mathrm{SK})$. The family $\mg{P_n}$ is tight if and only if the following two conditions are satisfied: \begin{enumerate}
        \item For each $\eta>0$, there exists an $a>0$ such that
        \begin{equation}\label{maximum-tight}
           \sup_{n\ge 1} P_n(\mg{x:\: \absnorm{x} > a}) ≤ \eta,
        \end{equation}
        \item For all $\eps, \eta>0$, there exist $0<\delta<1$ and $n_0\in\NN$ such that
        \begin{equation}\label{modulus-tight}
          \sup_{n\ge n_0}  P_n(\mg{x:\: w'_x(\delta)≥\eps})≤\eta.
        \end{equation}
    \end{enumerate}
\end{theorem}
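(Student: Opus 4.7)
The plan is to reduce the tightness criterion to an Arzelà--Ascoli style characterization of relatively compact subsets of $(D[0,1], d_{\mathrm{SK}})$: a set $A \subset D[0,1]$ has compact closure if and only if $\sup_{x \in A}\sup_{t\in[0,1]}|x(t)|<\infty$ and $\lim_{\delta \downarrow 0} \sup_{x \in A} w'_x(\delta) = 0$. Granting this characterization, conditions (i) and (ii) are tailored exactly to produce compact subsets of $D[0,1]$ of arbitrarily large $P_n$-measure uniformly in $n$, which is tightness by Prohorov.

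For the sufficiency direction, given $\eta > 0$ I would first invoke (i) to choose $a>0$ with $\sup_n P_n\bigl(\sup_t |x(t)| > a\bigr) \le \eta/2$. For each $k \ge 1$, applying (ii) with $\epsilon=1/k$ and with $\eta$ replaced by $\eta/2^{k+1}$ yields $\delta_k > 0$ and $n_k \in \mathbb{N}$ such that $\sup_{n \ge n_k} P_n(w'_x(\delta_k) \ge 1/k) \le \eta/2^{k+1}$. The finitely many exceptional measures $P_1,\dots,P_{n_k-1}$ are individually tight on the Polish space $D[0,1]$, so after shrinking each $\delta_k$ if necessary the bound extends to all $n$. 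The set
\[
K := \Bigl\{x : \sup_t |x(t)| \le a\Bigr\} \cap \bigcap_{k\ge 1} \bigl\{x : w'_x(\delta_k) \le 1/k\bigr\}
\]
then satisfies $P_n(K^c) \le \eta$ for every $n$, and by the precompactness characterization its closure is compact. For the necessity direction, given tight $\{P_n\}$ and $\eta>0$, one picks a compact $K$ with $P_n(K) \ge 1-\eta$ for all $n$; reading off boundedness of $\sup_t |x(t)|$ and uniform vanishing of $w'_x(\delta)$ on $K$ directly supplies (i) and (ii).

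The main obstacle is the Arzelà--Ascoli analogue itself, which is the genuinely topological ingredient. Because $d_{\mathrm{SK}}$ allows time reparametrizations, the modified modulus $w'_x$ is designed to ignore isolated jumps by skipping over short subintervals in the partition $\{t_i\}$ in \eqref{def:modulus}. Proving the characterization requires, for each tolerance, extracting from a uniformly controlled family $A$ a common partition $0=t_0<\cdots<t_r=1$ on which every $x \in A$ oscillates by less than the tolerance on each $[t_{i-1},t_i)$, and then running a Cantor-style diagonal argument along finer and finer such partitions. The Skorokhod convergence of the extracted subsequence is witnessed by a reparametrization $\lambda$ mapping each member's partition onto a fixed reference one, thereby converting Skorokhod closeness into uniform closeness of the composed paths $x\circ\lambda$. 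Once this compactness description is in hand, the rest of the argument sketched above is essentially bookkeeping.
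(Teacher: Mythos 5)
The paper does not prove this statement; it quotes it verbatim as Theorem 13.2 of Billingsley \cite{billing} and uses it as a black box. There is therefore no in-paper proof to compare against. Your sketch is, in substance, Billingsley's own argument: you reduce tightness to the Arzel\`a--Ascoli characterization of relatively compact subsets of $(D[0,1],d_{\mathrm{SK}})$ (Billingsley, Theorem 12.3), namely uniform boundedness together with $\lim_{\delta\downarrow 0}\sup_{x\in A}w'_x(\delta)=0$, and then in the sufficiency direction intersect the sublevel sets of $\|\cdot\|_\infty$ and of $w'_\cdot(\delta_k)$ over a suitable sequence $\delta_k$ to manufacture sets of uniformly large $P_n$-measure with compact closure. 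The bookkeeping details you mention are correct: the union bound over $\eta/2 + \sum_k \eta/2^{k+1} = \eta$ gives the measure estimate, and the finitely many exceptional indices $n < n_0$ can be absorbed because each single probability measure on a Polish space is tight (equivalently, because $w'_x(\delta)\to 0$ pointwise as $\delta\to 0$ for each fixed $x\in D[0,1]$, so $P_i(w'_x(\delta)\ge\epsilon)\to 0$ for each fixed $i$). The necessity direction is read off a compact $K$ with $\inf_n P_n(K)\ge 1-\eta$ exactly as you say. The one genuinely nontrivial ingredient, the $D[0,1]$ analogue of Arzel\`a--Ascoli, you correctly identify and outline via the diagonal/reparametrization argument; this is also how Billingsley proves it. So the approach is correct and is the same as the cited source's.
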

We derive from the latter theorem our own criteria for convergence:

\begin{proposition}[Convergence Criterion]\label{lem:convergence}
    Let $X_1,X_2,\dots$ be a sequence of random variables in $(D[0,1],d_\mathrm{SK})$. Suppose that for every $K\in\NN$, there exist random càdlàg step functions $X^K_1,X^K_2,\dots$ such that their jumps are all contained in the set $\mg{U_\phi\mid \abs\phi< K}$. If \begin{enumerate}
        \item For all $\alpha_1,\dots, \alpha_r \in [0,1]$, the marginals $\mathcal L\kl{X_n(\alpha_1),\dots, X_n(\alpha_n)}$ converge weakly to some distribution $\mu_{\alpha_1,\dots, \alpha_r}$, and
        \item For all $\eps>0$, \begin{equation*}
            \lim_{K\to\infty}\limsup_{n\to\infty} \Pk[\Big]{\absnorm[\big]{X_n-X^K_{n}}>\eps} = 0,
        \end{equation*}
    \end{enumerate}
    then $X_n$ converges weakly to some random process $X$ on $(D[0,1],d_\mathrm{SK})$ as $n\to \infty$, and for $\alpha_1,\dots, \alpha_r \in [0,1]$, we have
    \begin{equation}\label{lem:convergence:marginals}
        (X\kl{\alpha_1},\dots, X\kl{\alpha_r}) \sim \mu_{\alpha_1,\dots, \alpha_r}.
    \end{equation}
\end{proposition}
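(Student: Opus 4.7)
The plan is to verify the two conditions of the Billingsley tightness criterion, \Cref{lem:tightness_appendix}, for $(X_n)_{n\in\N}$ in $(D[0,1], d_{\mathrm{SK}})$, and then to use the prescribed finite-dimensional distributions from condition (1) to pin down every weak subsequential limit. The key observation is the elementary pair of bounds $\absnorm{X_n}\le \absnorm{X_n^K}+\absnorm{X_n-X_n^K}$ and $w'_{X_n}(\delta)\le w'_{X_n^K}(\delta)+2\absnorm{X_n-X_n^K}$. Combined with condition (2), these reduce both tightness criteria to elementary estimates for the step function $X_n^K$, whose jumps lie in the finite random set $W_K:=\mg{U_\phi:\abs\phi<K}$ of size at most $2^K-1$. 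Because the $U_i$ are diffuse, the minimum spacing $\Delta_K$ between successive elements of $\mg0\cup W_K\cup\mg1$ is positive almost surely.

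To verify the modulus condition \eqref{modulus-tight}: given $\eps,\eta>0$, first use (2) to pick $K$ and $n_0$ such that $\Pk{\absnorm{X_n-X_n^K}>\eps/4}<\eta/2$ for $n\ge n_0$, and then pick $\delta>0$ with $\Pk{\Delta_K\le 2\delta}<\eta/2$. On $\mg{\Delta_K>2\delta}$, taking $\mg0\cup W_K\cup\mg1$ as partition points of $[0,1]$ yields interval lengths $>\delta$, and $X_n^K$ is constant on each half-open interval (its jumps sitting on left endpoints), so $w'_{X_n^K}(\delta)=0$. The two events combine to give $\Pk{w'_{X_n}(\delta)>\eps}<\eta$.

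To verify the maximum condition \eqref{maximum-tight}: again pick $K,n_0$ by (2) with $\Pk{\absnorm{X_n-X_n^K}>1}<\eta/4$ for $n\ge n_0$, then choose $N$ with $\Pk{\Delta_K\le 1/N}<\eta/4$. On $\mg{\Delta_K>1/N}$ the deterministic grid $G_N:=\mg{j/N:0\le j\le N}$ meets every interval on which $X_n^K$ is constant, so $\absnorm{X_n^K}=\max_{t\in G_N}\abs{X_n^K(t)}\le \max_{t\in G_N}\abs{X_n(t)}+\absnorm{X_n-X_n^K}$. Condition (1) makes the $\R^{N+1}$-valued vector $(X_n(t))_{t\in G_N}$ convergent in distribution and hence tight, so there exists $a>0$ (enlarging if needed to absorb the finitely many small $n$) with $\Pk{\max_{t\in G_N}\abs{X_n(t)}>a}<\eta/4$ for every $n$. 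Union-bounding the three error events yields $\Pk{\absnorm{X_n}>a+2}<\eta$ uniformly in $n$.

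With tightness in hand, let $X$ be any weak subsequential limit of $(X_n)$ in $(D[0,1],d_{\mathrm{SK}})$. Then $X$ is càdlàg and its set of fixed discontinuities $D_X:=\mg{\alpha\in[0,1]:\Pk{X(\alpha-)\neq X(\alpha)}>0}$ is at most countable. At each $\alpha\notin D_X$ the projection $\pi_\alpha$ is $\mathcal L(X)$-a.s.\ continuous, so the continuous mapping theorem applied to $(\pi_{\alpha_1},\dots,\pi_{\alpha_r})$, combined with condition (1), forces the fdd of $X$ at such points to equal $\mu_{\alpha_1,\dots,\alpha_r}$. Since laws on $D[0,1]$ are determined by their fdd on a dense set of continuity points, all subsequential limits coincide, the full sequence converges weakly, and \eqref{lem:convergence:marginals} at arbitrary $\alpha_1,\dots,\alpha_r$ follows by a short right-continuity argument using condition (1) at approximating continuity points. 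The main delicate step is the maximum estimate: interfacing the random jump structure of $X_n^K$ with the $n$-uniform tightness provided only at deterministic points is the crux, and the grid trick above circumvents this by exploiting that $X_n^K$ takes only finitely many values.
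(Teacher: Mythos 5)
Your tightness argument is essentially the paper's. The modulus bound $w'_{X_n}(\delta)\le w'_{X_n^K}(\delta)+2\absnorm{X_n-X_n^K}$ (the paper simply writes $w'_{X_n}(\delta)\le 2\absnorm{X_n-X_n^K}$ since $w'_{X_n^K}(\delta)=0$ on the good event), the random spacing variable $\Delta_K$ (the paper's $\delta_K$), and the deterministic grid trick for \eqref{maximum-tight} (you use $\{j/N\}$, the paper uses $\{j2^{-r}\}$) all line up. These parts are correct.

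The genuine gap is in the last paragraph. After Theorem 13.1 in Billingsley you obtain the claimed marginals only at points $a\notin D_X$, the a.s.\ continuity points of $X$, and you propose to recover \eqref{lem:convergence:marginals} at arbitrary $a$ via ``a short right-continuity argument using condition (1) at approximating continuity points.'' That does not work as stated. Right-continuity of the paths of $X$ gives $\mu_{\alpha_k}=\mathcal L(X(\alpha_k))\Rightarrow\mathcal L(X(a))$ for $\alpha_k\downarrow a$, $\alpha_k\notin D_X$, but to conclude $\mathcal L(X(a))=\mu_a$ you must also show $\mu_{\alpha_k}\Rightarrow\mu_a$, and neither right-continuity nor condition (1) alone delivers this: condition (1) gives pointwise convergence $\mathcal L(X_n(\alpha))\to\mu_\alpha$ for each fixed $\alpha$ but says nothing about continuity of $\alpha\mapsto\mu_\alpha$. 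Equivalently, the projection $\pi_a$ is Skorokhod-continuous at $x$ only if $x$ is continuous at $a$, so until you prove $a\notin D_X$ a.s.\ you cannot invoke the continuous mapping theorem. The paper closes exactly this hole with a separate argument: it introduces the Skorokhod-continuous functional $J_{\delta,a}(x)=\tfrac1\delta\int_0^\delta x(a+s)\,\mathrm ds-\tfrac1\delta\int_{-\delta}^0 x(a+s)\,\mathrm ds$, shows $J_{\delta,a}(X_n)$ is small in probability for small $\delta$ by combining condition (2) with the a.s.\ event that no $U_\phi$ with $\abs\phi<K$ falls within $\delta$ of $a$, and lets $\delta\downarrow 0$ to conclude $X$ has no jump at $a$ a.s. One can repackage this (e.g.\ by bounding $\abs{X_n(\alpha_k)-X_n(a)}\le 2\absnorm{X_n-X_n^K}$ on the event that $[a,\alpha_k]$ avoids $\{U_\phi:\abs\phi<K\}$ and deducing $\mu_{\alpha_k}\Rightarrow\mu_a$), but either way condition (2) must re-enter, and the step is comparable in length to your entire tightness estimate --- it is not ``short,'' and as written it is missing.
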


\begin{proof}
    We first show that the set of distributions $\mg{\P_{X_1},\P_{X_2},\dots}$ is tight using \Cref{lem:tightness_appendix}.
    Observe that since $I_\phi>0$ a.s.\ for all $\phi\in\mg{0,1}^*$, there exists for every $K$ a random variable $\delta_K$ such that all points in $\mg{U_\phi\mid \abs\phi <K }$ are at least $\delta_K$ away from 0, 1 and each other.
    Hence, for every $\delta≤\delta_K$, the points $\mg{0,1} \cup \mg{U_\phi\mid \abs\phi <K}$ are a valid choice for the grid $\mg{t_0<\dots<t_r}$ in \eqref{def:modulus}. But since the processes $X^K_n$ are constant between these points,
    \begin{equation*}
        w'_{X_n}(\delta) ≤ \max_{0<i≤r}\sup_{t_{i-1}≤s,u<t_{i}}\abs{X_n(s)-X_n(u)} ≤ 2\absnorm[\big]{X_n-X^K_n}.
    \end{equation*}
    To show \eqref{modulus-tight}, let $\eps>0$. Then, for every $n,K\in\NN$ and $\delta>0$,
    \begin{equation*}
        \Pk{w'_{X_n}(\delta) > \eps} ≤ \Pk{\delta_K < \delta} + \Pk[\Big]{\absnorm{X_{n}-X_{n}^K} > \frac{\eps}2}.
    \end{equation*}
    For $n$ and $K$ sufficiently large, the last term is arbitrarily small; by then choosing $\delta$ small, the first term is also small, showing \eqref{modulus-tight}. We now can use \eqref{modulus-tight} to show \eqref{maximum-tight}: Let $r\in\NN$. Every subinterval with length at least $2^{-r}$ contains a dyadic number $j2^{-r}$, $j\in\NN$. Therefore,
    \begin{equation*}
        \absnorm{X_n} ≤ w'_{X_n}(2^{-r}) + \max_{j=0,\ldots,2^r} \:\abs[\big]{X_n\kl{j2^{-r}}}.
    \end{equation*}
    By \eqref{modulus-tight}, the first term fulfills
    \begin{equation}\label{pf:conv:1}
        \Pk[\big]{w'_{X_n}(2^{-r})>1}<\eta/2
    \end{equation} for sufficiently large $r$ and $n$.
    By our first condition, $\kl{X_n\kl{j2^{-r}}\mid 0≤j≤2^r}$ converges weakly for $n\to\infty$, so it is a tight family on $\RR^{2^r}$. Therefore there exists a $\rho\in \RR$ such that
    \begin{equation}\label{pf:conv:2}
        \Pk*{\max_{j=0,\ldots,2^r} \:\abs[\big]{X_n\kl{j2^{-r}}} > \rho} < \eta/2.
    \end{equation}
Combining \eqref{pf:conv:1} and \eqref{pf:conv:2}, we get \eqref{maximum-tight} for all $n$ bigger than some $n_0$, by possibly further enlarging $\rho$, this also holds for $n<n_0$.

    By \Cref{lem:tightness_appendix} $\mg{\P_{X_1},\P_{X_2},\dots}$ is tight, so $X_1,X_2,\dots$ has a subsequence converging in distribution to some random variable $X$. By Theorem 13.1 in \cite{billing} and the discussion preceding it, condition (1) implies that $X$ is the unique limit and that
    \begin{equation*}
        X\kl{\alpha_1},\dots, X\kl{\alpha_r} \sim \mu_{\alpha_1,\dots, \alpha_r}
    \end{equation*}
    for $\alpha_1,\dots, \alpha_r \in [0,1]$ such that $X$ is a.s.\ continuous in $\alpha_1,\dots, \alpha_r$. For \eqref{lem:convergence:marginals}, it remains to show that for all $a\in[0,1]$ the function $X$ is a.s.\ continuous in $a$.
    % Writing $U_\eps(x)$ for the $\eps$-ball around x, this is equivalent to showing
    % \begin{equation}
    %     \lim_{m\to\infty} \lim_{l\to\infty} \Pk[\big]{X\kl{U_{1/l}(a)}\subseteq U_{1/m}(X(a))} = 1.
    % \end{equation}

    On $D[0,1]$, the function $x\mapsto x(a)-x(a-)$ has discontinuities, but we may approximate it with
    \begin{equation*}
        J_{\delta,a}(x) := \frac1{\delta} \int_{0}^\delta x(a+s)\mathrm ds - \frac1{\delta}\int_{-\delta}^0 x(a+s)\mathrm ds \longrightarrow x(a)-x(a-) \quad (\delta\downarrow 0).
    \end{equation*}
  Since Skorokhod convergence implies pointwise convergence almost everywhere and convergence of the maximum, we obtain that $J_{\delta,a}$ is a continuous function.

    We show that $J_{\delta,a}(X) \to 0$ for $\delta\to0$ a.s. Let $\eps>0$, then by weak convergence
    \begin{align*}
        \lim_{\delta\to 0} \Pk*{J_{\delta,a}(X) > \eps}
        ≤ \lim_{\delta\to 0} \lim_{n\to\infty} \Pk*{J_{\delta,a}(X_n) > \eps}.
    \end{align*}
    For any $K\in\NN$, we have $J_{\delta,a} ≤ \absnorm{X_n-X^K_n}$ if there is no $U_\phi$, $\abs\phi<K$ closer than $\delta$ to $a$, so
    \begin{align*}
        \Pk*{J_{\delta,a}(X_n) > \eps}
        ≤ \inf_{K\in\NN}
        \Pk*{\absnorm{X_n-X^K_n} > \eps} + \Pk*{\exists\phi:\: \abs\phi<K,\,\abs{U_\phi-a}<\delta}.
    \end{align*}
    The first term is small for $n$, $K$ sufficiently large, the second term is small for $\delta$ sufficiently small. With this, $X$ is a.s.\ continuous in $a$.

\end{proof}

\subsection{Perturbation of the data} \label{Sec:pert}

$\mathrm{QuickVal}$ splits an interval $[L_{\phi},R_{\phi})$ by the first value falling into  $[L_{\phi},R_{\phi})$ denoted by $U_{\tau_{\phi}}$. Obviously, this implies dependencies between the data $U_i$ and the lengths $I_\phi$ of the intervals $[L_{\phi},R_{\phi})$. In the present section we construct a perturbed sequence $(\widetilde U_i)_{i\in\NN}$ to the data $(U_i)_{i\in\NN}$ such that we gain independence of $(\widetilde U_i)_{i\in\NN}$ from the $\sigma$-algebra $ \mathcal F_\infty$ of the  interval lengths defined in  \eqref{def_f_inf}. In particular, we aim that conditional on  $\mathcal F_\infty$ the number of data $(\widetilde U_1,\ldots,\widetilde U_n)$ falling into an interval $[L_{\phi},R_{\phi})$ is binomial $B(n, I_{\phi})$ distributed, see Lemma \ref{lem:stilde} below.

Every value $U_i$, $i\in\NN$, falls successively into subintervals generated by  $\mathrm{QuickVal}$ until becoming a pivot element. These subintervals correspond to the path between the root of the corresponding binary search tree and the node  where $U_i$ is inserted. Let $\phi_i \in\mg{0,1}^\ast$ denote the node where $U_i$ is inserted. Hence, we have $\tau_{\phi_i}=i$ and $U_i = L_{\phi_i} + I_{\phi_i0}$.

Let $(V_i)_{i\in\NN}$ be a sequence of i.i.d.~$\mathrm{unif}[0,1]$ random variables being independent of $(U_i)_{i\in\NN}$. We define
\begin{equation}\label{def_u_tilde}
\widetilde U_i := L_{\phi_i} + I_{\phi_i}V_i.
\end{equation}

\begin{lemma}\label{lem_pert}
The sequence $(\widetilde U_i)_{i\in\NN}$ defined in \eqref{def_u_tilde} consists of i.i.d.~$\mathrm{unif}[0,1]$ distributed random variables and is independent of $\mathcal F_\infty$.
\end{lemma}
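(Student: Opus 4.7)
I plan to prove the stronger conditional statement that, given $\mathcal F_\infty$, the sequence $(\widetilde U_i)_{i\in\NN}$ is i.i.d.~$\unif[0,1]$; since the conditional law will then not depend on $\mathcal F_\infty$, both assertions of the lemma drop out simultaneously. Concretely, it suffices to show that for each $n\ge 1$ and all bounded Borel $g_1,\dots,g_n\colon[0,1]\to\R$,
\begin{equation*}
    \Ek*{\prod_{i=1}^n g_i(\widetilde U_i)\mid\mathcal F_\infty}=\prod_{i=1}^n\int_0^1 g_i(u)\,du\qquad\mbox{almost surely.}
\end{equation*}

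The first step is to peel off the perturbations $V_i$ by conditioning on the finer $\sigma$-algebra $\mathcal F_\infty\vee\sigma(\phi_1,\dots,\phi_n)$. Because the $(V_i)$ are independent of $(U_j)_{j\in\NN}$ and the quantities $L_{\phi_i}$, $I_{\phi_i}$ are measurable with respect to $\mathcal F_\infty\vee\sigma(\phi_i)$, the conditional law of $\widetilde U_i$ is $\unif[L_{\phi_i},R_{\phi_i})$, independently across $i$. Setting $G_i(\phi):=\int_{L_\phi}^{R_\phi}g_i(u)\,du$, this yields
\begin{equation*}
    \Ek*{\prod_{i=1}^n g_i(\widetilde U_i)\mid\mathcal F_\infty,\phi_1,\dots,\phi_n}=\prod_{i=1}^n\frac{G_i(\phi_i)}{I_{\phi_i}},
\end{equation*}
which is already $\mathcal F_\infty\vee\sigma(\phi_1,\dots,\phi_n)$-measurable, reducing the proof to evaluating $\Ek*{\prod_{i=1}^n G_i(\phi_i)/I_{\phi_i}\mid\mathcal F_\infty}$.

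The telescoping I need is produced by the Markov-type identity
\begin{equation}\label{plan:markov}
    \Pk*{\phi_i=\phi\mid\mathcal F_\infty,\phi_1,\dots,\phi_{i-1}}=I_\phi\qquad\mbox{for }\phi\mbox{ in the frontier }F_i\mbox{ at step }i.
\end{equation}
Granted \eqref{plan:markov} and the partition property $\sum_{\phi\in F_i}I_\phi=1$, the tower property yields $\Ek*{G_i(\phi_i)/I_{\phi_i}\mid\mathcal F_\infty,\phi_1,\dots,\phi_{i-1}}=\sum_{\phi\in F_i}I_\phi\cdot G_i(\phi)/I_\phi=\int_0^1 g_i$, and iterating in $i$ collapses the product to the deterministic constant $\prod_i\int_0^1 g_i$. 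To prove \eqref{plan:markov} I plan to pass through a finite truncation. Writing $\mathcal F_N$ for the $\sigma$-algebra generated by the label-erased BST on $(U_1,\dots,U_N)$, exchangeability of the indices $\{i,\dots,N\}$ makes the residual assignment of those indices to the remaining tree-nodes a uniformly random linear extension of the truncated tree-poset. The hook-length formula for rooted trees then delivers $\Pk{\phi_i=\phi\mid\mathcal F_N,\phi_1,\dots,\phi_{i-1}}=|T^{(N)}_\phi|/(N-i+1)$, where $|T^{(N)}_\phi|$ is the size of the subtree rooted at $\phi$ in the BST on $(U_1,\dots,U_N)$. The strong law of large numbers forces $|T^{(N)}_\phi|/N\to I_\phi$ a.s., and martingale convergence along $\mathcal F_N\vee\sigma(\phi_1,\dots,\phi_{i-1})\uparrow\mathcal F_\infty\vee\sigma(\phi_1,\dots,\phi_{i-1})$ identifies the limit of the left-hand side with the conditional probability appearing in \eqref{plan:markov}.

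The hard part is the Markov identity \eqref{plan:markov}: heuristically it merely says ``a uniform insertion into the pre-existing tree picks each frontier node with probability equal to its interval length'', but the rigorous argument needs to combine the combinatorial hook-length counting on the finite BST with a careful passage to $N\to\infty$ via martingale convergence. Once \eqref{plan:markov} is in hand, the remaining manipulations---the $V$-conditioning of the second paragraph and the iterated tower of the third---are routine.
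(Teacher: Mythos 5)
Your first two paragraphs reproduce the paper's own argument: integrate out the $V_i$'s to reduce the claim to the Markov identity $\Prob(\phi_i=\phi\mid\mathcal F_\infty,\phi_1,\dots,\phi_{i-1})=I_\phi$ on the frontier, then iterate with the tower property. The paper states this identity without proof; you are right that it is the crux, and your attempt to derive it via finite binary search trees is a natural thing to try, but the passage to the limit is broken. The hook-length computation $\Prob(\phi_i=\phi\mid\mathcal F_N,\phi_1,\dots,\phi_{i-1})=|T^{(N)}_\phi|/(N-i+1)$ and the strong law $|T^{(N)}_\phi|/N\to I_\phi$ are both fine, but the $\sigma$-algebras $\mathcal F_N$ you use (generated by the label-erased BST built from $U_1,\dots,U_N$) are \emph{not nested}, so L\'evy's upward theorem does not apply. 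Concretely, $\mathcal F_2=\sigma(U_1,U_2)$ (the two-node tree reveals which child of the root $U_2$ occupies, hence its value), whereas on the event that all three of $\epsilon,0,1$ are occupied after three insertions, $\mathcal F_3$ cannot tell $U_2$ apart from $U_3$; thus $\mathcal F_2\not\subseteq\mathcal F_3$, and joining with the fixed $\sigma(\phi_1,\dots,\phi_{i-1})$ does not repair this. Forcing monotonicity by replacing $\mathcal F_N$ with $\bigvee_{M\le N}\mathcal F_M$ overshoots: the full evolution of the label-erased trees determines all arrival times $\tau_\phi$ and hence the entire sequence, so the union is $\sigma(U_1,U_2,\dots)\supsetneq\mathcal F_\infty$, and conditional on that $\phi_i$ is deterministic.

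To repair the argument you would need an honest filtration increasing to $\mathcal F_\infty$, for instance $\mathcal H_m:=\sigma(I_\phi:|\phi|\le m)$, and a direct proof of $\Prob(\phi_i=\phi\mid\mathcal H_m,\phi_1,\dots,\phi_{i-1})=I_\phi$ at each finite level $m$. This no longer follows from counting linear extensions of a finite tree, since $\mathcal H_m$ is not the $\sigma$-algebra of any finite BST. A cleaner route is a decoupling observation: given the level-$\le m$ pivot values, the interleaving of arrivals among the $2^m$ level-$m$ intervals is a multinomial process with cell probabilities $(I_\phi)_{|\phi|=m}$, independent of the within-interval subsequences that generate the rest of $\mathcal F_\infty$; the Markov identity then drops out and the level-$m$ truncation can be sent to infinity by martingale convergence along $\mathcal H_m$. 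As it stands, your proposal identifies the correct hard step but does not establish it.
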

\begin{proof}
It suffices to show that $\widetilde U_i$ conditional on  $\mathcal F_\infty$ and $\widetilde U_1,\dots, \widetilde U_{i-1}$ is uniformly distributed on $[0,1]$ for all $i\in\NN$. We use infinitesimal notation to denote this claim by
\begin{equation*}
    \Pk*{\widetilde U_i \in \mathrm du\given \mathcal F_\infty, \widetilde U_1,\dots, \widetilde U_{i-1}}
    = \mathbf{1}_{[0,1]}(u) \mathrm du,\quad i\in\NN.
\end{equation*}
For each $i\in \NN$ the random variables $\widetilde U_i$ and $U_i$ fall into the same interval $[L_{\phi_i}, R_{\phi_i})$, hence  $\phi_1,\ldots,\phi_{i-1}$ are determined by $\widetilde U_1,\dots,\widetilde U_{i-1}$. Let us additionally condition on $\phi_i$, then, by definition,
\begin{align*}
    \Pk*{\widetilde U_i \in \mathrm du\given \mathcal F_\infty, \widetilde U_1,\dots, \widetilde U_{i-1}, \phi_i} = \frac{1}{I_{\phi_i}}\eins_{[L_{\phi_i}, R_{\phi_i})}(u)\mathrm du.
\end{align*}
Note that $\phi_i$ denotes one of the $i$ external nodes of the binary search tree with internal nodes denoted by $\phi_1,\ldots,\phi_{i-1}$. We denote by $\mathrm{Ext}_{i-1}$ the set of the labels of these external nodes. Hence, conditional on $\mathcal F_\infty, \phi_1,\dots, \phi_{i-1}$ the label $\phi_i$ is chosen from $\mathrm{Ext}_{i-1}$ with probability given by the length of the corresponding interval, i.e., $\Pk{\phi_i=\phi\given \mathcal F_\infty, \phi_1,\dots, \phi_{i-1}}=I_\phi$ for all $\phi\in \mathrm{Ext}_{i-1}$. Thus, by the law of total probability we obtain
\begin{equation*}
    \Pk*{\widetilde U_i \in \mathrm du\given \mathcal F_\infty, \widetilde U_1,\dots, \widetilde U_{i-1}}
    = \sum_{\phi\in \mathrm{Ext}_{i-1}} I_{\phi}\frac{1}{I_{\phi}}\eins_{[L_\phi,R_\phi)}(u)\mathrm du = \mathbf{1}_{[0,1]}(u)\mathrm du.
\end{equation*}
This implies the assertion.
\end{proof}
The $\widetilde U_i$ are now coupled with the $U_i$ but independent of the $I_\phi$. To compare with the number of key comparisons required by $\mathrm{QuickVal}((U_1,\ldots,U_n),\alpha)$ we define
\begin{equation*}
    \widetilde S_{\alpha,k,n} := \sum_{i=1}^n \eins_{[L_{\alpha,k},R_{\alpha,k})}(\widetilde U_i).
\end{equation*}
\begin{lemma}\label{lem:stilde}
    Conditional on $I_{\alpha,k}$ we have that $\widetilde S_{\alpha,k,n}$ has the binomial B$(n,I_{\alpha,k})$ distribution. Moreover, for all $\alpha\in[0,1]$, $n\in \NN$ and $0\le k\le n$ we have
    \begin{equation}
        S_{\alpha,k,n}\le (\widetilde S_{\alpha,k,n} - 1)^+ ≤ S_{\alpha,k,n} + k - 1.
    \end{equation}
\end{lemma}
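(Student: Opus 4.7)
For the first assertion I apply \Cref{lem_pert}. That result says $(\widetilde U_i)_{i \in \NN}$ is an i.i.d.\ $\mathrm{unif}[0,1]$ sequence independent of $\mathcal F_\infty$, and the interval $[L_{\alpha,k}, R_{\alpha,k})$ of length $I_{\alpha,k}$ is $\mathcal F_\infty$-measurable. Conditioning on $\mathcal F_\infty$ therefore makes the indicators $\eins_{[L_{\alpha,k}, R_{\alpha,k})}(\widetilde U_i)$, $1 \le i \le n$, i.i.d.\ Bernoulli with parameter $I_{\alpha,k}$, so $\widetilde S_{\alpha,k,n}$ is conditionally $B(n, I_{\alpha,k})$. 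Since the conditional distribution depends on $\mathcal F_\infty$ only through $I_{\alpha,k}$, the same binomial law holds conditional on the coarser $\sigma$-algebra $\sigma(I_{\alpha,k})$.

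For the comparison inequality I set $\phi := \phi(\alpha,k)$ and classify each $i \in \{1,\dots,n\}$ by the position of the insertion node $\phi_i$ (the node at which $U_i$ is stored in the binary search tree) relative to $\phi$. The decisive observation is that $\widetilde U_i = L_{\phi_i} + I_{\phi_i} V_i$ lies in $[L_{\phi_i}, R_{\phi_i})$ by construction, while the family of tree intervals is either nested or disjoint. Consequently only three cases arise: (a) $\phi$ is a prefix of $\phi_i$, in which case $[L_{\phi_i}, R_{\phi_i}) \subseteq [L_\phi, R_\phi)$ and $\widetilde U_i \in [L_\phi, R_\phi)$ automatically; (b) $\phi_i$ is a strict prefix of $\phi$, so $\widetilde U_i$ is uniform on the strictly larger interval $[L_{\phi_i}, R_{\phi_i})$ and may or may not lie in $[L_\phi, R_\phi)$ depending on $V_i$; (c) $\phi$ and $\phi_i$ are incomparable, so the intervals are disjoint and $\widetilde U_i \notin [L_\phi, R_\phi)$.

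Assembling the three contributions yields the sandwich. The indices in case (a) are precisely $\tau_\phi$ (if $\tau_\phi \le n$) together with the descendants of $\phi$ inserted by time $n$, and for $i > \tau_\phi$ the membership $U_i \in [L_\phi, R_\phi)$ is equivalent to $\phi$ being a prefix of $\phi_i$; thus case (a) contributes exactly $\eins_{\{\tau_\phi \le n\}} + S_{\alpha,k,n}$. Case (c) contributes $0$. In case (b) each of the proper prefixes $\psi$ of $\phi$ contributes at most one index, namely $i = \tau_\psi$ when $\tau_\psi \le n$, giving an overall contribution bounded by the number $k$ of proper prefixes. Writing $\widetilde S_{\alpha,k,n} = \eins_{\{\tau_\phi \le n\}} + S_{\alpha,k,n} + X$ with an integer $X \ge 0$ controlled by the case-(b) count and taking $(\cdot - 1)^+$ then yields the lower bound by distinguishing the cases $\tau_\phi \le n$ (in which $(\cdot)^+$ is inert) and $\tau_\phi > n$ (in which $S_{\alpha,k,n} = 0$), while the upper bound follows from the case-(b) count together with the observation that the case-(a) correction $\eins_{\{\tau_\phi \le n\}}$ and the ancestor correction $X$ cannot both be at their maxima simultaneously: when $\tau_\phi > n$ the pivot at $\phi$ is absent, and when $\tau_\phi \le n$ the interval $[L_\phi, R_\phi)$ is strictly contained in each ancestor interval. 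The principal obstacle is this last book-keeping step, which requires linking the presence of the pivot at $\phi$ to the possible locations of the perturbed ancestor pivots $\widetilde U_{\tau_\psi}$ inside $[L_\phi, R_\phi)$.
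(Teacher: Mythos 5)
Your proof follows the same route as the paper's: the first assertion is read off from \Cref{lem_pert}, and for the sandwich you classify each index $i$ according to whether $\phi_i$ is a descendant of, a proper ancestor of, or incomparable with $\phi(\alpha,k)$, using that $U_i$ and $\widetilde U_i$ always share the node interval $[L_{\phi_i}, R_{\phi_i})$ and that the ancestor case contributes at most $k$ extra indices. This is exactly the paper's argument, merely phrased more explicitly in tree language, so the approach is essentially identical.

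One remark on the final tally, which you yourself flag as ``the principal obstacle''. Your decomposition correctly gives $\widetilde S_{\alpha,k,n} = \mathbf{1}_{\{\tau_{\alpha,k}\le n\}} + S_{\alpha,k,n} + X$ with $0\le X\le k$, hence $(\widetilde S_{\alpha,k,n}-1)^+ \le S_{\alpha,k,n} + k$. The attempt to tighten this to the stated $S_{\alpha,k,n}+k-1$ by arguing that the indicator and $X$ cannot both be maximal does not go through: when $\tau_{\alpha,k}\le n$ all ancestor pivots have already appeared, and each $\widetilde U_{\tau_{\alpha,j}}$, $j<k$, is uniform on the strictly larger interval $[L_{\alpha,j},R_{\alpha,j})$ conditionally on $\mathcal F_\infty$, so $X=k$ occurs with positive conditional probability (already for $k=1$, $\tau_{\alpha,1}\le n$ and $\widetilde U_1\in[L_{\alpha,1},R_{\alpha,1})$ one gets $(\widetilde S_{\alpha,1,n}-1)^+ = S_{\alpha,1,n}+1$). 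The paper's own proof is terse at this point (``there are only $k$ such pivots''), and the constant appears to be a harmless off-by-one; the bound $S_{\alpha,k,n}+k$ that you actually establish is all the sequel uses, since only $|S_{\alpha,k,n}-\widetilde S_{\alpha,k,n}| = \mathrm O(k)$, summed to $\mathrm O(K^2)$, is needed.
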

\begin{proof}
    The conditional distribution of $\widetilde S_{\alpha,k,n}$ follows from Lemma \ref{lem_pert}.
    Recall that $S_{\alpha,k,n}$ is defined as $\sum_{i=\tau_{\alpha,k}}^n \eins\mg{L_{\alpha,k-1}\le U_i < R_{\alpha,k-1}}$.  By definition, $U_i$ and $\widetilde U_i$ are in the interval $[L_{\phi_i}, R_{\phi_i})$ for all $i\in\NN$. If $U_i\in(L_{\alpha,k}, R_{\alpha,k})$, then $U_i$ appears as a pivot after the $k$-th pivot. Hence, its interval $[L_{\phi_i}, R_{\phi_i})$ and thus also $\widetilde U_i$ are contained in $(L_{\alpha,k}, R_{\alpha,k})$. The $k$-th pivot $U_{\tau_{\alpha,k}}$ itself does not contribute to $S_{\alpha,k,n}$, which implies the left inequality stated in the present lemma.

    For the right inequality, assume for some $i\in\NN$ that the perturbed value $\widetilde U_i$ is in $(L_{\alpha,k}, R_{\alpha,k})$, but $U_i$ is not. Then the corresponding interval $(L_{\phi_i}, R_{\phi_i})$ must
    contain $(L_{\alpha,k}, R_{\alpha,k})$, thus making $U_i$ a pivot that appears before the $k$-th pivot. Since there are only $k$ such pivots, the right inequality follows.
        % What is missing to the binomial distribution are the values $U_i$ for $i<\tau_{\alpha,k}$, which could have ended up in $[R_{\alpha,k-1},L_{\alpha,k-1})$ if they would not be pivots. We want to decouple the intervals and the values, so if $i=\tau_{\alpha,k'}$ for $k'< k$ is a pivot, we replace $U_i$ with $\widetilde U_i$, an independent random variable distributed uniformly on $[L_{\alpha,k'},R_{\alpha,k'})$. For all other $i$, we define $\widetilde U_i := U_i$. These $\widetilde U_i$ are now also i.i.d.\ unif[0,1] and additionally independent of the $I_{\alpha,k'}, k'<k$. Thus, $\widetilde S_{\alpha,k,n} := \sum_{i=1}^n \eins\mg{L_{\alpha,k-1}\le \widetilde U_i \le R_{\alpha,k-1}}$ is $B(n,I_{\alpha,k})$-distributed.
    % The left inequality comes from the fact that $\widetilde S_{\alpha,k,n}$ is always counting the $k$-th pivot $U_{\tau_{k,n}}$, which $S_{\alpha,k,n}$ does not. The right inequality holds because at most $k$ pivots get replaced.
\end{proof}
% But to use this model, we have to couple it to QuickVal. This is possible as follows: For $i\in \NN$ let $[L_i,R_i)$ be the interval that $U_i$ is the pivot of. Then we define $\widetilde U_i := L_i + V_i(R_i-L_i)$ with $(V_i)_{i\in\NN}$ i.i.d.\ unif[0,1] variables independent of $(U_i)_{i\in\NN}$, thus making $\widetilde U_i$ being uniformly distributed on $[L_i,R_i)$.

% These $\widetilde U_i$ are then the values of our modified problem with the same interval sizes. They are i.i.d.\ unif[0,1] and independent of the interval lengths. This is because when conditioning on which interval $[L_i,R_i)$ the value $U_i$ is pivot of, $U_i$ is also uniformly distributed on $[L_i,R_i)$

\subsection{Proof of Theorem \ref{mainTheorem}} \label{Sec:proof}

To split the contributions to the process $G_n$ into costs resulting from above and below a level $K\in\NN$ we define
\begin{align}\label{def:gakn}
    G_{\alpha,k,n} := \frac{S_{\alpha,k,n} - nI_{\alpha,k}}{\sqrt n}
\end{align}
as the normalized fluctuations of the contribution at level $k$, and set
\begin{align}
    G^{\le K}_{\alpha,n} := \sum_{k=0}^K  G_{\alpha,k,n}, \qquad
    G^{\le K}_{n}:=\kl[\big]{G^{\le K}_{\alpha,n}}_{\alpha\in[0,1]},\qquad
    G^{>K}_{\alpha,n} := \sum_{k=K+1}^\infty G_{\alpha,k,n}.
    \label{def:glek}
\end{align}
Hence, $G_{\alpha,n} =  G^{\le K}_{\alpha,n} + G^{>K}_{\alpha,n}$. Analogously, for the perturbed values $\widetilde S_{k,n}$ we define
\begin{align}
    W_{\alpha,k,n} := \frac{\widetilde S_{\alpha,k,n} -nI_{\alpha,k}}{\sqrt n}, \qquad
    W^{\le K}_{\alpha,n} := \sum_{k=0}^K  W_{\alpha,k,n} \qquad
    W^{\le K}_{n} :=  \kl[\big]{W^{\le K}_{\alpha,n}}_{\alpha\in[0,1]}.
    \label{def:w}
\end{align}
% We split the process like this because the most costs are incurred in the first steps and in the first $K$ steps, there are only $2^K$ possible different values.
% The cost above level $K$, $G^{\le K,\alpha}_n$, form a step function taking only $2^K$ different values. We will first show that these values converge for $n\to\infty$ against a multivariate normal distribution for every $K$. The resulting limit process will then converge weakly against $G_\infty$ for $K\to\infty$. Then we will show that the costs below each level, $G^{\le K,\alpha}_n$ converge uniformly to zero.

\begin{lemma}\label{lem:clt}
For all  $K\in \NN$ we have convergence in distribution of $(G^{\le K}_n)_{n\in\NN}$ towards a mixture $G^{\le K}_\infty =(G^{\le K}_{\alpha,\infty})_{\alpha\in[0,1]}$ of centred Gaussian processes within $\|\cdot\|_\infty$. Conditional on $\mathcal F_\infty$, the limit  $G^{\le K}_\infty$ is a centred Gaussian  process with covariance function given, for  $\alpha,\beta\in[0,1]$ by
\begin{align}\label{def:sigma:k}
         \Cov\kl[\big]{G^{\le K}_{\alpha,\infty}, G^{\le K}_{\beta,\infty} \mid \mathcal F_\infty}
        = \sum_{k=0}^{K \minv J}\sum_{j=0}^K I_{\alpha, j \maxv k} + \kl[\big]{1 + (K \minv J)}\sum_{j=J+1}^K I_{\beta, j} - S_\alpha^{\le K} S_\beta^{\le K},
    \end{align}
    where $J = J(\alpha,\beta)$ is as in Theorem \ref{mainTheorem} and $S_\alpha^{\le K} := \sum_{k=0}^K I_{\alpha,k}$.
    The stated convergence in distribution also holds conditionally in $\mathcal F_\infty$, i.e., we have almost surely that  $\mathcal L(G^{\le K}_n\mid \mathcal F_\infty)$  converges weakly towards $\mathcal L(G^{\le K}_\infty\mid \mathcal F_\infty)$.
\end{lemma}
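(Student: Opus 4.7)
The plan is to reduce to the perturbed process $W_n^{\le K}$ from Section \ref{Sec:pert}, apply a multivariate central limit theorem conditional on $\mathcal F_\infty$, and then convert finite-dimensional convergence into $\|\cdot\|_\infty$-convergence using the finite-step structure of $G_n^{\le K}$.

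First, Lemma \ref{lem:stilde} bounds $|S_{\phi(\alpha,k),n} - \widetilde S_{\phi(\alpha,k),n}|$ by a constant depending only on $k$, so
\begin{equation*}
\sup_{\alpha \in [0,1]} \bigl| G_{\alpha,n}^{\le K} - W_{\alpha,n}^{\le K}\bigr|
\le \frac{C_K}{\sqrt n} \longrightarrow 0,
\end{equation*}
and it suffices to prove the stated convergence for $W_n^{\le K}$. For each $\alpha$, the tuple $(\phi(\alpha,0), \dots, \phi(\alpha,K))$ is constant on each of the at most $2^K$ intervals $[L_\phi, R_\phi)$ with $|\phi| = K$, so $W_n^{\le K}$ is a step function whose jump locations $\{U_{\tau_\phi} : |\phi| < K\}$ are $\mathcal F_\infty$-measurable and do not depend on $n$. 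Consequently, convergence in $\|\cdot\|_\infty$ reduces to joint convergence in distribution of the finite vector $(W_{\phi,n})_{|\phi|\le K}$, where $W_{\phi,n} := (\widetilde S_{\phi,n} - nI_\phi)/\sqrt n$.

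Conditional on $\mathcal F_\infty$, Lemma \ref{lem_pert} ensures that $(\widetilde U_i)_{i\in\mathbb N}$ is i.i.d.\ $\mathrm{unif}[0,1]$, so $\widetilde S_{\phi,n} = \sum_{i=1}^n \eins_{[L_\phi,R_\phi)}(\widetilde U_i)$ and the vector $(W_{\phi,n})_{|\phi|\le K}$ is a properly normalized sum of $n$ i.i.d.\ random vectors whose conditional law depends only on the $\mathcal F_\infty$-measurable vector $(I_\phi)_{|\phi|\le K}$. A pathwise application of the multivariate CLT, valid since $I_\phi > 0$ almost surely, then yields almost-sure conditional weak convergence of this vector to a centred Gaussian with conditional covariance entries
\begin{equation*}
\Cov\bigl( \eins_{[L_\phi,R_\phi)}(\widetilde U_1),\, \eins_{[L_\psi,R_\psi)}(\widetilde U_1) \given \mathcal F_\infty \bigr)
= \bigl|[L_\phi,R_\phi) \cap [L_\psi,R_\psi)\bigr| - I_\phi I_\psi.
\end{equation*}
Integrating over $\mathcal F_\infty$ transfers this to unconditional weak convergence, while the conditional form of the claim is immediate from the pathwise CLT.

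Finally, summing the pairwise covariances above over prefixes $\phi(\alpha,k)$, $\phi(\beta,j)$ for $0 \le k, j \le K$ and using that any two intervals from the binary search tree are either nested (intersection equal to the shorter one) or disjoint yields \eqref{def:sigma:k}. The main obstacle here is the bookkeeping: one must separate the cases $k, j \le K \wedge J$ (both prefixes lie on the common ancestor path, contributing $I_{\alpha, j \vee k}$), exactly one of $k, j$ exceeds $J$ (one interval contains the other, contributing $I_{\beta, j}$ or $I_{\alpha, k}$), and $k, j > J$ (the two intervals lie in disjoint subtrees and contribute $0$). Collecting the resulting terms and subtracting $S_\alpha^{\le K} S_\beta^{\le K}$ from the bilinear cross term reproduces exactly \eqref{def:sigma:k}.
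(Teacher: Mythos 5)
Your proof is correct and follows essentially the same route as the paper: reduce to $W_n^{\le K}$ via Lemma~\ref{lem:stilde}, apply the multivariate CLT conditional on $\mathcal F_\infty$ using Lemma~\ref{lem_pert}, exploit that the step locations are $\mathcal F_\infty$-measurable to upgrade from fdd to $\|\cdot\|_\infty$-convergence, and compute the covariance by bilinearity over pairwise interval intersections. Your explicit case analysis ($k,j\le J$ nested on the common path; exactly one of $k,j$ exceeds $J$; both exceed $J$ giving disjoint subtrees) is a slightly more detailed rendering of the bookkeeping that the paper compresses into the displayed formula, but the underlying argument is identical.
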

%ALTE VERSION
%   For the contributions to the cost above level $K\in \NN$ converge weakly conditional on $\mathcal F_\infty$ in $\absnorm\cdot$. I.e.\ $\mathcal L(G^{\le K}_n\mid \mathcal F_\infty)$ converges a.s.\ to some process $L(G^{\le K}_\infty\mid \mathcal F_\infty)$. Given $\mathcal F_\infty$, $G^{\le K}_\infty$ is a centralized Gauss process with random covariances given by
%    \begin{align}
%        \Cov\kl[\big]{G^{\le K}_{\alpha,\infty}, G^{\le K}_{\beta,\infty} \mid \mathcal F_\infty}
%        = \sum_{k=0}^{K \minv J}\sum_{j=0}^K I_{\alpha, j \maxv k} + \sum_{k=0}^{K \minv J}\sum_{j=J+1}^K I_{\beta, j \maxv k} - S_\alpha^{\le K} S_\beta^{\le K},
%    \end{align}
%    where $J = J(\alpha,\beta)$ is as in Theorem \ref{mainTheorem} and $S_\alpha^{\le K} := \sum_{k=0}^K I_{\alpha,k}$. Because of dominated convergence, this result also holds unconditionally.

\begin{proof}
    First note that by Lemma \ref{lem:stilde} we have $\absnorm*{G^{\le K}_n - W^{\le K}_n} < K^2/\sqrt{n}$, so it suffices to show the lemma for $W^{\le K}_n$. Conditional on $\mathcal F_\infty$, the value of $W^{\le K}_{\alpha,n}$ is given by
    \begin{equation}
        W^{\le K}_{\alpha,n} = \frac1{\sqrt n}\sum_{i=1}^n\sum_{k=0}^{K} \eins\mg{L_{\alpha,k} ≤ \widetilde U_i < R_{\alpha,k}} - (R_{\alpha,k}-L_{\alpha, k}),
    \end{equation}
    thus the $2^k$ different values of the process $W^{\le K}_n$ can be expressed as the sum of $n$ centred, bounded i.i.d.\ random vectors, scaled by $1/\sqrt{n}$. By the multivariate central limit theorem, these converge towards a multivariate, centred normal variable. As the positions of the jumps, still conditional on $\mathcal F_\infty$, are fixed, we have convergence of $W^{\le K}_n$ and thus also of $G^{\le K}_n$ towards a Gaussian process.
    Define $X_{\alpha,k} := \eins\mg{L_{\alpha,k-1} ≤ \widetilde U_1 < R_{\alpha,k-1}}$. The covariance function then is given by
    \begin{align}\label{cov_oben}
        \Cov\kl[\big]{G^{\le K}_{\alpha,\infty}, G^{\le K}_{\beta,\infty} \mid \mathcal F_\infty}
        &= \Cov\kl[\bigg]{
            \sum_{k=0}^{K}X_{\alpha,k},
            \sum_{j=0}^{K}X_{\beta,j}
            \:\bigg\vert\: \mathcal F_\infty
        }
        \nl = \sum_{k=0}^{K}\sum_{j=0}^{K}\Ek*{X_{\alpha,k}X_{\beta,j} \given \mathcal F_\infty} - S^{\le K}_\alpha S^{\le K}_\beta
        \nl = \sum_{k=0}^{K \minv J}\sum_{j=0}^K I_{\alpha, k \maxv j} + \kl[\big]{1+\kl{K \minv J}}\sum_{j=J+1}^K I_{\beta, j} - S_\alpha^{\le K} S_\beta^{\le K}.
    \end{align}
    The assertion follows.
\end{proof}
% If we increase the cutoff $K$, we get convergence against $G_\infty$.
% \begin{lemma}
%     It holds a.s. that $\mathcal L(G^{\le K}_\infty \mid \mathcal F_\infty) \to \mathcal L(G_\infty \mid \mathcal F_\infty)$ weakly in $\absnorm \cdot$ for $K\to\infty$. Thus, we have also unconditionally $G^{\le K}_\infty \overset d\to G_\infty$.
% \end{lemma}

% For process convergence, we still have to show that $G_n$ is a tight family.
% By the Arzela-Ascoli~theorem for càdlàg functions\cite{billing}, we have to show
% two properties for tightness: The first is the tightness of $\absnorm{G_n}$, written out, this is
% \begin{equation}
%     \forall \eps > 0\, \exists C>0\, \forall n\in \NN :\:
%     \Pk{\absnorm{G_n}≥ C} ≤ \eps
% \end{equation}
% or equivalently, because every finite family is tight,
% \begin{equation}\label{condition1}
%     \forall \eps > 0\, \exists N,C>0\, \forall n≥N :\:
%     \Pk{\absnorm{G_n}≥ C} ≤ \eps.
% \end{equation}
% The second condition is given by
% \begin{equation}
%     \forall \eps,\eta > 0\, \exists \delta, N>0\, \forall n≥N :\:
%     \Pk{\omega'_{G_n}(\delta)≥\eta} ≤ \eps,
% \end{equation}
% where $\omega'_{G_n}(\delta)$ is defined as
% \begin{equation}\label{defi:stetigkeitsmodul}
%     \inf_{(T_i)_i}\sup_{\substack{i\in I\\s,t\in T_i}} \abs{G_n(t)-G_n(s)}
% \end{equation}
% where $T_i$ is a segmentation of $[0,1]$ into left-closed intervals of length
% at least $\delta$.

To see that the covariance functions in \eqref{cov_oben} converge towards the  covariance function of  $G_\infty$ stated in Theorem \ref{mainTheorem} we restate a Lemma of Gr\"ubel and R\"osler \cite[Lemma 1]{roesler:quickselect} that the maximal length of the intervals at a level is decreasing geometrically with increasing levels. It is obtained observing that $\Ek[\big]{\sum_{\abs\phi=k}I_\phi^2} = \fracc23^k$ and states:
\begin{lemma}\label{lem:k1}
    There exists an a.s.\ finite random variable $K_1$ such that for all $k\ge K_1$:
    \begin{equation}\label{interval-size}
        \max_{\alpha \in [0,1]} I_{\alpha, k} \le k\fracc23^{k/2}.
    \end{equation}
\end{lemma}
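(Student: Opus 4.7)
The plan is to reduce the problem to a summable tail bound via the elementary estimate
\[
  \max_{\alpha\in[0,1]} I_{\alpha,k} = \max_{|\phi|=k} I_\phi \le \Bigl(\sum_{|\phi|=k} I_\phi^2\Bigr)^{1/2},
\]
which is valid because the intervals $\mg{[L_\phi,R_\phi):|\phi|=k}$ partition $[0,1)$ almost surely, then to compute the expectation of the right-hand side exactly by a one-step recursion, and finally to invoke Markov's inequality together with the Borel--Cantelli lemma.

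For the expected sum of squares, I would use that by \eqref{defi:irl} the pivot $U_{\tau_\phi}$ is, conditional on the interval $[L_\phi,R_\phi)$, uniformly distributed on it, so that $I_{\phi 0} = V_\phi I_\phi$ and $I_{\phi 1} = (1-V_\phi) I_\phi$ for a $\mathrm{unif}[0,1]$ random variable $V_\phi$. From $\Ek{V_\phi^2} + \Ek{(1-V_\phi)^2} = 2/3$ I obtain
\[
  \Ek*{I_{\phi 0}^2 + I_{\phi 1}^2 \mid I_\phi} = \tfrac{2}{3}\,I_\phi^2.
\]
Summing over $|\phi|=k-1$, taking expectations and iterating down to $I_\epsilon = 1$ then yields
\[
  \Ek*{\sum_{|\phi|=k} I_\phi^2} = \Bigl(\frac{2}{3}\Bigr)^{k}.
\]

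Markov's inequality applied to the non-negative random variable $\max_{\alpha\in[0,1]} I_{\alpha,k}^2$ then gives
\[
  \Pk*{\max_{\alpha\in[0,1]} I_{\alpha,k} > k\,(2/3)^{k/2}}
  \le \frac{\Ek*{\sum_{|\phi|=k} I_\phi^2}}{k^2 (2/3)^k} = \frac{1}{k^2},
\]
which is summable in $k$; the Borel--Cantelli lemma then forces almost surely only finitely many $k$ to violate \eqref{interval-size}, and defining $K_1$ as one more than the largest such $k$ (taking $K_1 = 0$ if none exists) completes the argument. No real obstacle is anticipated: the proof rests only on the identity $\Ek{V^2}+\Ek{(1-V)^2}=2/3$ for $V\sim\mathrm{unif}[0,1]$, and the polynomial factor $k$ in the target bound is exactly what is needed to make the Markov tail summable.
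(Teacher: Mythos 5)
Your proof is correct and takes exactly the approach the paper indicates: the paper cites Gr\"ubel and R\"osler and notes the lemma ``is obtained observing that $\mathbb{E}\bigl[\sum_{|\phi|=k} I_\phi^2\bigr] = (2/3)^k$,'' which is precisely the recursion you compute, followed by the same Markov/Borel--Cantelli step to turn the summable tail $1/k^2$ into an a.s.\ finite $K_1$.
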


\Cref{lem:k1} implies that the covariance functions of $G^{≤K}_\infty$ from \eqref{def:sigma:k} converge a.s.\ to the covariance function of $G_\infty$ from \eqref{def:sigma:inf}.

For the costs from levels $k>K$ we find:

\begin{proposition}\label{prop:rest-klein}
    For all $\eps,\eta>0$ there are constants
    $K,N \in \NN$ such that for all $n≥N$
    \begin{equation}
        \Pk[\big]{\absnorm{G^{>K}_n} > \eta} < \eps.
    \end{equation}
\end{proposition}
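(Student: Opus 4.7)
The plan is to reduce to the perturbed process from Section~\ref{Sec:pert} and then bound the uniform norm by a truncated levelwise summation. First, by Lemma~\ref{lem:stilde}, $|\widetilde S_{\alpha,k,n}-S_{\alpha,k,n}|\le k$ at each level, and since the effective tree depth $\max\{k\ge 0:\tau_{\alpha,k}\le n\}$ is $O(\log n)$ with high probability, the total replacement error in $\|\cdot\|_\infty$ is $O((\log n)^2/\sqrt n)=o(1)$. It therefore suffices to bound $\|W^{>K}_n\|_\infty$, where $W^{>K}_n=\sum_{k>K}W_{k,n}$ as in \eqref{def:w}, and I use the bound $\|W^{>K}_n\|_\infty\le\sum_{k>K}\|W_{k,n}\|_\infty$. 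Since $W_{k,n}$ is constant on each interval $[L_\phi,R_\phi)$ with $|\phi|=k$, we have $\|W_{k,n}\|_\infty=\max_{|\phi|=k}|\widetilde S_{\phi,n}-nI_\phi|/\sqrt n$.

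The key calculation is a fourth-moment estimate at each level. Conditional on $\mathcal F_\infty$, $\widetilde S_{\phi,n}\sim\mathrm{Bin}(n,I_\phi)$ yields $\Ek*{((\widetilde S_{\phi,n}-nI_\phi)/\sqrt n)^4\given\mathcal F_\infty}\le 3I_\phi^2+I_\phi/n$. Bounding the max by the $\ell^4$-sum over $\phi$ and using $\sum_{|\phi|=k}I_\phi^2\le\max_{|\phi|=k}I_\phi$ (valid since $\sum_{|\phi|=k}I_\phi=1$) gives
\[
  \Ek*{\|W_{k,n}\|_\infty^4\given\mathcal F_\infty}\le 3\max_{|\phi|=k}I_\phi+\frac{1}{n}.
\]
By Lemma~\ref{lem:k1}, $\max_{|\phi|=k}I_\phi\le k(2/3)^{k/2}$ for all $k\ge K_1$; restricting to the high-probability event $\{K_1\le K_1^*\}$ for a deterministic $K_1^*$ handles the randomness in $K_1$.

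Combining via Markov's inequality with a summable sequence $c_k:=k^{-2}$ gives $\Pk*{\|W_{k,n}\|_\infty>c_k\given\mathcal F_\infty}\le c_k^{-4}(3\max_{|\phi|=k}I_\phi+1/n)$. Summing the geometric part $c_k^{-4}\cdot k(2/3)^{k/2}$ over $k>K$ tends to $0$ as $K\to\infty$. The residual $1/n$-contribution is handled by cutting off at depth $K^*(n):=C\log n$ with $C$ chosen large: over the $O(\log n)$ intermediate levels the $1/n$-part accumulates only to $O(\log^9 n/n)\to 0$, while for $k>K^*$ one has $n\max_{|\phi|=k}I_\phi\ll 1$ so that with high probability $\widetilde S_{\phi,n}\in\{0,1\}$ at those depths and the process is dominated by the deterministic tail $\sum_{k>K^*}\sqrt n\,k(2/3)^{k/2}=O(\log n/\sqrt n)\to 0$. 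The main obstacle is precisely this interplay: the fourth-moment bound becomes ineffective at depths $k\gtrsim\log n$ due to the $1/n$-term, and there one must switch to a direct argument based on the fact that essentially no $\widetilde U_i$ reaches these very deep intervals.
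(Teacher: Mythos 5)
Your proposal has a genuine gap in the reduction to the perturbed process. You claim the total replacement error in $\|\cdot\|_\infty$ is $O((\log n)^2/\sqrt{n})$ and that "it therefore suffices to bound $\|W^{>K}_n\|_\infty$" with $W^{>K}_n=\sum_{k>K}W_{k,n}$. But that quantity is a.s.\ infinite: for $\alpha=\widetilde U_i$ (any $i\le n$), one has $\widetilde S_{\widetilde U_i,k,n}\ge 1$ for every $k$, since $\widetilde U_i$ lies in exactly one depth-$k$ interval at every depth $k$, namely the one along its own path. Hence the partial sums $\sum_{K<k\le M}(\widetilde S_{\alpha,k,n}-nI_{\alpha,k})/\sqrt n$ tend to $+\infty$ as $M\to\infty$ for such $\alpha$, and $\|W^{>K}_n\|_\infty=\infty$. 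Lemma~\ref{lem:stilde} bounds $|\widetilde S_{\alpha,k,n}-S_{\alpha,k,n}|\le k$ for each level, but summing $k/\sqrt n$ over all $k>K$ diverges; the replacement only has bounded total error when restricted to $O(\log n)$ levels. Your patch for $k>K^*$ does not close this: $\widetilde S_{\phi,n}\in\{0,1\}$ with high probability still allows $\sum_{k>K^*}\widetilde S_{\alpha,k,n}/\sqrt n$ to diverge along the path of any $\widetilde U_i$, and the statement "essentially no $\widetilde U_i$ reaches these very deep intervals" is simply false --- every $\widetilde U_i$ sits in a depth-$k$ interval for every $k$. The object that actually vanishes beyond the tree height is $S_{\alpha,k,n}$, not $\widetilde S_{\alpha,k,n}$, and your argument never switches back from $\widetilde S$ to $S$.

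The correct structure, which is what the paper does, is to split the sum at $K^*=\lfloor(9/2)\log n\rfloor$ (or any $C\log n$ with $C$ large enough so that $\Pk{H_n>K^*}$ is small and $\sqrt n\,(2/3)^{K^*/2}K^*=o(1)$): for $K<k\le K^*$ one passes from $S$ to $\widetilde S$ with total replacement error $O((K^*)^2/\sqrt n)=O((\log n)^2/\sqrt n)$, and for $k>K^*$ one keeps $S$, restricts to the event $\{H_n\le K^*\}$, uses $S_{\alpha,k,n}=0$ there, and bounds the remaining deterministic part $\sqrt n\sum_{k>K^*}I_{\alpha,k}$ via Lemma~\ref{lem:k1}. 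Within that corrected framework, your fourth-moment computation $\Ek{\|W_{k,n}\|_\infty^4\mid\mathcal F_\infty}\le 3\max_{|\phi|=k}I_\phi+1/n$, together with Markov and the summable sequence $c_k=k^{-2}$, is a perfectly valid and slightly more elementary substitute for the Chernoff bound of Lemma~\ref{lem:lvlCosts} in the intermediate range $K<k\le K^*$; that part of your argument is sound.
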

We postpone the proof of the above \Cref{prop:rest-klein} and first use it and \Cref{lem:clt} to show convergence of the finite-dimensional distributions, denoted fdd-convergence.
\begin{lemma}\label{lem:fdd} We have fdd-convergence of $G_n$ towards $G_\infty$.
%   Conditionally on $\mathcal F_\infty $ almost surely  we have  fdd-convergence of $G_n$ towards $G_\infty$. Thus, the fdd-convergence also holds unconditionally.
\end{lemma}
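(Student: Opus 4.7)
Fix $\alpha_1,\dots,\alpha_r\in[0,1]$, and let $Y_n:=(G_{\alpha_i,n})_{i=1}^r$, $Y_n^K:=(G^{\le K}_{\alpha_i,n})_{i=1}^r$, $Y_\infty^K:=(G^{\le K}_{\alpha_i,\infty})_{i=1}^r$, and $Y_\infty:=(G_{\alpha_i,\infty})_{i=1}^r$ denote the relevant fdd-marginals. The plan is to apply the standard three-line approximation scheme for weak convergence (cf.\ Theorem~3.2 in \cite{billing}): it suffices to verify \emph{(i)} $Y_n^K\longdto Y_\infty^K$ as $n\to\infty$ for each fixed $K$; \emph{(ii)} $Y_\infty^K\longdto Y_\infty$ as $K\to\infty$; and \emph{(iii)} $\lim_{K\to\infty}\limsup_{n\to\infty}\Pk{\absnorm{Y_n-Y_n^K}>\eta}=0$ for every $\eta>0$. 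Condition (i) is the finite-dimensional content of Lemma~\ref{lem:clt}, and (iii) follows from Proposition~\ref{prop:rest-klein}, which even controls the full sup norm $\absnorm{G^{>K}_n}\ge \absnorm{Y_n-Y_n^K}$.

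The substantive step is (ii), which I plan to obtain via conditional characteristic functions. By Lemma~\ref{lem:clt}, conditional on $\mathcal F_\infty$ the vector $Y_\infty^K$ is centered Gaussian with covariance matrix $\Sigma^K=(\Sigma^K_{\alpha_i,\alpha_j})$ read off from \eqref{def:sigma:k}. I will show that $\Sigma^K\to\Sigma^\infty$ entrywise a.s.\ as $K\to\infty$, where $\Sigma^\infty$ is the target covariance from \eqref{def:sigma:inf}. Comparing the two formulas entrywise, the differences reduce to tail sums of the form $\sum_{j>K}I_{\beta,j}$ and $\sum_{k=0}^{J\minv K}\sum_{j>K}I_{\alpha,j\maxv k}$, together with the replacement of $K\minv J$ by $J$ once $K>J(\alpha,\beta)$. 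Here the cases $\alpha_i=\alpha_j$ (where $J=\infty$, so that the sum $\sum_{j=J+1}^K$ is empty and matches the indicator $\mathbf{1}_{\{\alpha\neq\beta\}}$ in \eqref{def:sigma:inf}) and $\alpha_i\neq\alpha_j$ (where $K\minv J=J$ for all $K\ge J$) are handled separately. Lemma~\ref{lem:k1} provides a.s.\ geometric decay $\max_\alpha I_{\alpha,k}\le k(2/3)^{k/2}$ for large $k$, so the tails are a.s.\ absolutely summable and vanish in the limit, giving $\Sigma^K\to\Sigma^\infty$ entrywise a.s.

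The conditional characteristic function of $Y_\infty^K$ at $\xi\in\R^r$ equals $\exp(-\tfrac12\xi^\top\Sigma^K\xi)$, which is bounded by $1$ and converges a.s.\ to $\exp(-\tfrac12\xi^\top\Sigma^\infty\xi)$. Dominated convergence then passes this through the expectation, yielding convergence of the unconditional characteristic functions and hence $Y_\infty^K\longdto Y_\infty$. Combining (i)--(iii) gives $Y_n\longdto Y_\infty$, the asserted fdd convergence. The main obstacle is the purely bookkeeping step of matching the covariance formulas \eqref{def:sigma:k} and \eqref{def:sigma:inf} in the limit $K\to\infty$; the genuinely analytic difficulty of controlling the residual fluctuations has already been discharged in Proposition~\ref{prop:rest-klein}.
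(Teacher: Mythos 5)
Your proposal follows essentially the same route as the paper's proof: the split $G_n=G^{\le K}_n+G^{>K}_n$, \Cref{lem:clt} for the $n\to\infty$ step at fixed $K$, almost-sure convergence of the conditional covariance functions $\Sigma^K\to\Sigma^\infty$ (via \Cref{lem:k1}) combined with Lévy's continuity theorem / dominated convergence of conditional characteristic functions for the $K\to\infty$ step, and \Cref{prop:rest-klein} to kill the residual. The paper combines the three ingredients by a diagonal sequence $K_n\to\infty$ plus Slutsky, whereas you invoke Billingsley's approximation theorem (his Theorem~3.2) directly; these are equivalent packagings of the same argument, and your write-up is, if anything, slightly more explicit about the covariance bookkeeping and the passage from conditional to unconditional characteristic functions.
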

% \begin{remark}
%     The conditional part means that $\mathcal L(G_n\mid \mathcal F_\infty)$ converges a.s.\ to $\mathcal L(G_\infty\mid \mathcal F_\infty)$. It does not mean that $\Ek{G_n\mid \mathcal F_\infty}$ converges to $\Ek{G_\infty \mid \mathcal F_\infty}$; that is a stronger statement.
% \end{remark}
\begin{proof}
    For any $K$, we can split $G_n=G^{≤K}_n + G^{>K}_n$. By \Cref{lem:clt}, we have
    \begin{align*}
    G^{≤K}_n \overset{\mathrm{fdd}}{\longrightarrow} G^{≤K}_\infty \qquad (n\to\infty).
    \end{align*}
    Furthermore, because the covariance functions of the  $G^{≤K}_\infty$ converge a.s., we obtain
        \begin{align*}
    G^{≤K}_\infty \overset{\mathrm{fdd}}{\longrightarrow}\ G_\infty \qquad (K\to\infty)
     \end{align*}
by Lévy's continuity theorem. Hence, for all $\alpha_1,\dots, \alpha_\ell\in [0,1]$ and all $t_1, \dots, t_\ell\in\RR$ we find a sequence $(K_n)_{n\in\NN}$ in $\NN$ such that
    \[
        \Pk[\big]{G^{≤K_n}_{\alpha_1,n} < t_1, \dots, G^{≤K_n}_{\alpha_\ell,n} < t_\ell} \longrightarrow \Pk[\big]{G_{\alpha_1,\infty} < t_1, \dots, G_{\alpha_\ell,\infty} < t_\ell} \quad (n\to\infty).
    \]
Now, since $\absnorm{G^{>K_n}_n} \to 0$ in probability by \Cref{prop:rest-klein} the claim of \Cref{lem:fdd} follows by Slutzky's theorem.
\end{proof}

To prepare for the proof of \Cref{prop:rest-klein}, we show that the fluctuations on each level are also at least geometrically decreasing. Recall $K_1$ from Lemma \ref{lem:k1}.
% Since $I_{\alpha,k}$ is decreasing by the factor $\sqrt{\frac32}$ and $I_{\alpha,k}=\sqrt n G_{\alpha,k,n}$ for deep levels $k>n$ where $S_{\alpha,k,n} = 0$, it is already clear that we will have to choose $a<\sqrt{\frac32}$.
\begin{lemma}\label{lem:lvlCosts}
    For $a>1$ sufficiently small, there exist constants $b>1$ and $C_b>0$ such that for all $k,n\in\NN$
    \begin{equation*}
        \Pk*{\max_{\alpha \in[0,1]} \abs{W_{\alpha,k,n}}>a^{-k},\, K_1≤k} \le C_b\exp\kl*{-b^k} + c(k,n),
    \end{equation*}
    where
    \begin{equation*}
        c(k,n) := 2^{k+1}\exp\kl*{
            -\frac12a^{-k}\sqrt n
        }
    \end{equation*}
    In particular, we can choose $a$ so small that for $n\to\infty$
    \begin{equation}\label{reihen}
         \sum_{k=1}^{\floor{(9/2)\log n}}c(k,n) \longrightarrow 0.
    \end{equation}
\end{lemma}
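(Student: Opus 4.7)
The plan is to condition on $\mathcal F_\infty$, reduce the supremum over $\alpha\in[0,1]$ to a maximum over the $2^k$ depth-$k$ cells $[L_\phi,R_\phi)$, and apply Bernstein's inequality to each of the resulting binomial counts $\widetilde S_{\phi,n}:=\sum_{i=1}^n\mathbf 1_{[L_\phi,R_\phi)}(\widetilde U_i)$. By \Cref{lem_pert}, conditional on $\mathcal F_\infty$ the variable $\widetilde S_{\phi,n}$ has distribution $B(n,I_\phi)$, so Bernstein's inequality yields
\begin{equation*}
    \Pk[\big]{|\widetilde S_{\phi,n}-nI_\phi|>a^{-k}\sqrt n\mid\mathcal F_\infty}
    \le 2\exp\kl*{-\frac{a^{-2k}n/2}{nI_\phi+a^{-k}\sqrt n/3}}.
\end{equation*}
Splitting the denominator according to whether the variance term $nI_\phi$ or the range term $a^{-k}\sqrt n/3$ dominates, the right-hand side is bounded by
\begin{equation*}
    2\exp\kl[\big]{-a^{-2k}/(4I_\phi)}+2\exp\kl[\big]{-(3/4)a^{-k}\sqrt n}.
\end{equation*}

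On the event $\{K_1\le k\}$, \Cref{lem:k1} gives $I_\phi\le k(2/3)^{k/2}$, whence $a^{-2k}/(4I_\phi)\ge (4k)^{-1}b_0^k$ with $b_0:=(3/2)^{1/2}a^{-2}$, and $b_0>1$ as soon as $a^2<(3/2)^{1/2}$. Taking a union bound over the $2^k$ cells of depth $k$ and then integrating out the conditioning on $\mathcal F_\infty$,
\begin{equation*}
    \Pk[\big]{\max_{\alpha\in[0,1]}|W_{\alpha,k,n}|>a^{-k},\,K_1\le k}\le 2^{k+1}\exp\kl[\big]{-b_0^k/(4k)}+c(k,n),
\end{equation*}
where the Bernstein term coming from the range was absorbed into $c(k,n)$ by using $3/4\ge 1/2$. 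For any $b\in(1,b_0)$, the first summand is at most $C_b\exp(-b^k)$ for a sufficiently large constant $C_b$, since $b_0^k/(4k)$ eventually dwarfs $b^k+(k+1)\log 2$, while finitely many small values of $k$ are absorbed by further enlarging $C_b$.

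For \eqref{reihen}, one checks by differentiating $\log c(k,n)=(k+1)\log 2-\tfrac12 a^{-k}\sqrt n$ in $k$ that the derivative equals $\log 2+(\sqrt n\log a/2)a^{-k}>0$ when $a>1$, so $k\mapsto c(k,n)$ is monotonically increasing and
\begin{equation*}
    \sum_{k=1}^{\lfloor(9/2)\log n\rfloor}c(k,n)\le \tfrac{9}{2}\log n\cdot 2^{(9/2)\log n+1}\exp\kl[\big]{-\tfrac12 n^{1/2-(9/2)\log a}}.
\end{equation*}
Shrinking $a$ further so that $\log a<1/9$ turns the exponent $1/2-(9/2)\log a$ positive, and the stretched-exponential decay crushes the polynomial-in-$n$ prefactor. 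The principal obstacle is calibrating $a$ to meet both constraints simultaneously, namely $a^2<(3/2)^{1/2}$ for the Bernstein step and $\log a<1/9$ for the summability step; both push $a$ towards $1$ from above, but the intersection of admissible ranges is nonempty, which is precisely why the truncation $(9/2)\log n$ is chosen where it is.
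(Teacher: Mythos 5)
Your proof is correct and follows essentially the same route as the paper: condition on the interval lengths, apply a Chernoff-type concentration bound to each of the $2^k$ binomial counts, split the exponent into a variance-dominated regime (controlled by \Cref{lem:k1} on $\{K_1\le k\}$, yielding the $C_b\exp(-b^k)$ term) and a range-dominated regime (yielding $c(k,n)$), and take a union bound. The only stylistic differences are that you invoke Bernstein's inequality where the paper uses McDiarmid's Chernoff bound (\Cref{lem:chernoff}), which produce numerically slightly different but interchangeable constants, and that you spell out the verification of \eqref{reihen} via monotonicity of $k\mapsto c(k,n)$, which the paper leaves implicit; conditioning on all of $\mathcal F_\infty$ rather than on a single $I_{\alpha,k}$ before taking the union bound over $\phi$ is, if anything, a cleaner way to phrase the same step.
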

For the proof of Lemma \ref{lem:lvlCosts} we require the following Chernoff bound:
\begin{lemma}\label{lem:chernoff}
    Let $S_n$ be binomial $B(n,p)$ distributed for some $p\in[0,1]$ and $n\in\NN$ and let $\mu := \Ek{S_n}$, $\eps≥0$. Then
    \begin{equation*}
        \Pk[\Big]{S_n \notin \kl[\big]{(1-\eps)\mu, (1+\eps)\mu}}
        ≤ 2\exp\kl*{-\frac{\eps^2\mu}{2+\eps}}.
    \end{equation*}
\end{lemma}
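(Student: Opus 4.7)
The plan is to deploy the textbook Chernoff / exponential-moment argument on each of the two tails separately and combine them with a union bound to account for the prefactor~$2$. For the upper tail, Markov's inequality applied to $e^{tS_n}$ with $t>0$ gives
\begin{equation*}
    \Pk*{S_n \geq (1+\eps)\mu} \;\leq\; e^{-t(1+\eps)\mu}\,\Ek*{e^{tS_n}}.
\end{equation*}
Since $S_n$ is a sum of $n$ independent Bernoulli$(p)$ variables, its moment generating function factors as $\Ek*{e^{tS_n}} = (1-p+pe^t)^n$, and the elementary inequality $1+y \leq e^y$ yields the standard envelope $\Ek*{e^{tS_n}} \leq \exp\bigl(\mu(e^t-1)\bigr)$.

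Optimizing the resulting bound over $t>0$ gives the minimizer $t=\log(1+\eps)$ and hence
\begin{equation*}
    \Pk*{S_n \geq (1+\eps)\mu} \;\leq\; \exp\bigl(\mu[\eps-(1+\eps)\log(1+\eps)]\bigr).
\end{equation*}
The identical computation run with $t<0$ handles the lower tail (which is vacuous when $\eps\geq 1$) and produces, for $\eps\in[0,1]$,
\begin{equation*}
    \Pk*{S_n \leq (1-\eps)\mu} \;\leq\; \exp\bigl(\mu[-\eps-(1-\eps)\log(1-\eps)]\bigr).
\end{equation*}
A union bound over the two tails then accounts for the factor~$2$ in the statement.

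The remaining step is to simplify the relative-entropy exponents into the rational form $-\eps^2\mu/(2+\eps)$ of the lemma, and this is the only piece of analysis beyond the bookkeeping of the Chernoff method. It reduces to verifying the elementary inequality
\begin{equation*}
    (1+\eps)\log(1+\eps)-\eps \;\geq\; \frac{\eps^2}{2+\eps} \qquad (\eps\geq 0),
\end{equation*}
together with its mirror counterpart $\eps+(1-\eps)\log(1-\eps) \geq \eps^2/(2-\eps) \geq \eps^2/(2+\eps)$ for $\eps\in[0,1]$. I would check each by letting $g$ denote the difference of the two sides, observing $g(0)=g'(0)=0$, and showing $g''(\eps)\geq 0$; the second derivatives simplify to easily-signed rational functions of $\eps$. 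This comparison is the main (and essentially only) obstacle in the proof, and it is entirely routine.
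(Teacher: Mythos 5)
Your proposal takes a different route from the paper: the paper does not derive the bound at all, but simply combines the upper and lower tail bounds of McDiarmid \cite[Theorem 2.3]{McDiarmid:concentration}. Your self-contained Chernoff derivation is the standard argument underlying that citation, and the upper-tail half is fully correct: the optimization at $t=\log(1+\eps)$, the envelope $\Ek{e^{tS_n}}\le\exp(\mu(e^t-1))$, and the comparison $(1+\eps)\log(1+\eps)-\eps\ge \eps^2/(2+\eps)$ (whose second-derivative check reduces to $(2+\eps)^3\ge 8(1+\eps)$, i.e.\ $4\eps+6\eps^2+\eps^3\ge0$) all go through.

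There is, however, one false step in the lower-tail half: the claimed mirror inequality $\eps+(1-\eps)\log(1-\eps)\ge \eps^2/(2-\eps)$ does not hold. Writing $h(\eps):=\eps+(1-\eps)\log(1-\eps)$, the Taylor expansions give $h(\eps)=\eps^2/2+\eps^3/6+\cdots$ while $\eps^2/(2-\eps)=\eps^2/2+\eps^3/4+\cdots$, so $h(\eps)<\eps^2/(2-\eps)$ for all small $\eps>0$ (e.g.\ $h(1/2)\approx0.1534<1/6$); your proposed convexity check would in fact detect this, since for $g(\eps)=h(\eps)-\eps^2/(2-\eps)$ one finds $g''(\eps)=\frac{1}{1-\eps}-\frac{8}{(2-\eps)^3}<0$ near $0$. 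The damage is contained because you only need the weaker bound $h(\eps)\ge \eps^2/(2+\eps)$, and that follows from the genuinely elementary $h(\eps)\ge \eps^2/2$ (immediate from $h(0)=h'(0)=0$ and $h''(\eps)=\frac{1}{1-\eps}\ge1$) together with $\eps^2/2\ge\eps^2/(2+\eps)$. With that one link replaced, your argument is a complete and correct substitute for the paper's appeal to McDiarmid.
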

\begin{proof}
    Combine upper and lower bound in McDiarmid \cite[Theorem 2.3]{McDiarmid:concentration}.
\end{proof}
\begin{proof}[Proof of \Cref{lem:lvlCosts}]
    Fix some $\alpha\in[0,1]$.
    % We want to bound $ = \Pk{\abs{\widetilde S_{\alpha,k,n} - nI_{k,\alpha}} > n^{0.5}a^{-k}}$.
    Conditionally on $I_{k,\alpha}$, the costs $\widetilde S_{\alpha,k,n}$ are $B(n, I_{k,\alpha})$-distributed by Lemma \ref{lem_pert}. The Chernoff bound in \Cref{lem:chernoff} implies
    \begin{align}
        \Pk[\big]{\abs{W_{\alpha,k,n}}>a^{-k} \given I_{\alpha,k}}
        &= \Pk*{
            \abs[\big]{\widetilde S_{\alpha,k,n} - nI_{\alpha,k}} > \sqrt{n}a^{-k}
            \given I_{\alpha,k}
        }
        \nl ≤ 2\exp\kl*{-\frac{
                na^{-2k}
            }{
                nI_{\alpha,k}\kl{2+\sqrt{n} a^{-k}/(nI_{\alpha,k})}}
            }
        \nl = 2\exp\kl*{-\kl*{2a^{2k}I_{\alpha,k} + a^k/\sqrt{n}}\inv}.
        \label{prf:chernoff}
    \end{align}
    Because the function $x\mapsto\exp(-x\inv)$ is monotonically increasing and a sum is smaller than twice the maximum,
    \begin{align}
        \Pk[\big]{\abs{W_{\alpha,k,n}}>a^{-k} \given I_{\alpha,k}}
        &≤ 2\exp\kl*{-\kl*{4a^{2k}I_{\alpha,k} \maxv 2a^k/\sqrt{n}}\inv}
        \nl=
        2\exp\kl*{-\kl*{4a^{2k}I_{\alpha,k}}\inv} \maxv 2\exp\kl*{-\frac12 a^{-k}\sqrt n}.
    \end{align}
    To bound $W_{\alpha,k,n}$ for all $\alpha,k$, we sum up over all $\phi \in \mg{0,1}^k$,
    \begin{align}
        \Pk[\Big]{\max_{\alpha\in[0,1]}\abs{W_{\alpha,k,n}}>a^{-k}\given I_{\alpha,k}}
        &≤ 2^{k+1}\exp\kl*{-\frac14a^{-2k}I_{\alpha,k}\inv} + 2^{k+1}\exp\kl*{-\frac12 a^{-k}\sqrt n}
    \end{align}
    When furthermore $k≥K_1$, we have $I_{\alpha,k}<k\fracc23^{k/2}$,
    so
    \begin{equation}
        \Pk[\Big]{\max_{\alpha\in[0,1]}\abs{W_{\alpha,k,n}}>a^{-k},\, K_1 \le k}
        ≤
        2^{k+1}\exp\kl*{
            -\frac14a^{-2k}k\inv \fracc32^{k/2}
        }
        +
        2^{k+1}\exp\kl*{
            -\frac12a^{-k}\sqrt n
        }.
    \end{equation}
    For $a^4<1.5$, the term $a^{-2k}\fracc32^{k/2}$ is $\Omega(b^k)$ for some $b>1$, and we can choose the constant $C_b>0$ appropriately.
\end{proof}

\noindent We are now prepared for the proof of \Cref{prop:rest-klein}.

\begin{proof}[Proof of \Cref{prop:rest-klein}]
  Let $\varepsilon,\eta>0$.  To $K_1$ from Lemma \ref{lem:k1} and $a, b$ and $C_b$ from Lemma \ref{lem:lvlCosts} we choose $K$ sufficiently large such that
    \begin{align}\label{3bound}
        \Pk{K_1>K} ≤ \frac{\eps}{4},\qquad \sum_{k=K}^\infty a^{-k} ≤ \frac{\eta}{4} \qquad \mbox{and} \qquad \sum_{k=K}^\infty C_b\exp(-b^k) ≤\frac{\eps}{4}.
    \end{align}
    Let $H_n$ be the maximum amount of steps needed by  $\mathrm{QuickVal}((U_1,\ldots,U_n),\alpha)$  for any $\alpha$. Thus, $H_n$ is also the height of the binary search tree built from $U_1,\dots, U_n$.
    Devroye \cite{devroye:bst} showed that the height has expectation $\Ek{H_n} = \gamma \log n + o(\log n)$ with $\gamma = 4.311\ldots$ Reed \cite{reed:bst} and Drmota \cite{dr02,dr03} further showed that $\Var(H_n) = \mathrm{O}(1)$. Hence, we can choose $N$ sufficiently large such that
    \begin{align}\label{2bounds}
        \Pk[\big]{H_n>\floor{(9/2)\log n}} < \frac{\eps}{4},\qquad
        \sum_{k=1}^{\floor{(9/2)\log n}} c(k,n) ≤ \frac{\eps}{4},\qquad\text{and}\qquad
        \frac{\kl*{(9/2)\log n}^2}{\sqrt n} ≤ \frac\eta4
    \end{align}
    for all $n≥N$. Subsequently we use the decomposition
     \begin{align*}
        G^{>K}_{\alpha,n}= \sum_{k=K+1}^{\floor{(9/2)\log n}} \frac{S_{\alpha,k,n}-nI_{\alpha,k}}{\sqrt{n}} +  \sum_{\floor{(9/2)\log n}+1}^\infty \frac{S_{\alpha,k,n}-nI_{\alpha,k}}{\sqrt{n}} =: \Gamma_n +  G^{>\floor{(9/2)\log n}}_{\alpha,n}
    \end{align*}
    and consider the event
    \begin{align*}
       A_n:=\{K_1>K\} \cup \{H_n>\floor{(9/2)\log n}\}.
    \end{align*}
    We have $\Prob(A_n)<\varepsilon/2$ for all $n\ge N$. Note that on $A_n^c$ (the complement of $A_n$) we have $S_{\alpha,k,n}=0$ for all $k>\floor{(9/2)\log n}$ and also the bound on $I_{\alpha,k}$ from Lemma \ref{lem:k1} applies, hence
    \begin{align*}
     \abs*{G^{>\floor{(9/2)\log n}}_{\alpha,n}}
     &\le\sum_{\floor{(9/2)\log n}+1}^\infty\sqrt{n}I_{\alpha,k}
     \le\sum_{\floor{(9/2)\log n}+1}^\infty\sqrt{n}k\fracc23^{k/2}
     \\&=\mathrm{O}\kl*{n^{1/2-(9/4)\log(3/2)}\log n} =\mathrm{o}(1)
    \end{align*}
    since $(9/4)\log(3/2) = 0.912\ldots$. Hence, we can enlarge  $N$ so that on $A_n^c$ we have $\abs[\big]{G^{>\floor{(9/2)\log n}}_{\alpha, n}} < \eta/2$ for all $n\ge N$. This implies the bound
     \begin{align}
        \Pk[\big]{\abs{G^{>K}_{\alpha,n}} > \eta}
         &≤ \Prob(A_n) + \Pk*{\mg*{\abs{G^{>K}_{\alpha,n}} > \eta}\cap A_n^c}
         \nl≤ \frac{\eps}{2} +\Pk*{\mg*{\abs{\Gamma_n} > \frac{\eta}{2}}\cap A_n^c}
         + \Pk*{\mg*{\abs[\big]{G^{>\floor{(9/2)\log n}}_{\alpha,n}}> \frac{\eta}{2}}\cap A_n^c}.
         \label{sum_with3}
    \end{align}
    Note that the third summand in \eqref{sum_with3} is $0$. Hence, it remains to bound the second  summand in \eqref{sum_with3}. To this end note that
    \begin{align}
        \abs{\Gamma_n} &≤
        \sup_{\alpha\in[0,1]}
        \sum_{k=K+1}^{\floor{(9/2)\log n}}
        \kl*{
          \abs*{
            \frac{\widetilde S_{\alpha,k,n}
            -nI_{\alpha,k}}{\sqrt{n}}
          }
          +\abs*{
            \frac{S_{\alpha,k,n}-\widetilde S_{\alpha,k,n}}{\sqrt n}
          }
        }
        \nl ≤ \kl[\Bigg]{\sum_{k=K+1}^{\floor{(9/2)\log n}} \max_{\alpha\in[0,1]}\abs{W_{\alpha,k,n}}} + \frac{\floor{(9/2)\log n}^2}{\sqrt n},
        \label{gamma-n-spaltung}
    \end{align}
    where Lemma \ref{lem:stilde} is used. The third relation in \eqref{2bounds} assures that the second term in \eqref{gamma-n-spaltung} is smaller than $\eta/4$.
    In view of the second relation in \eqref{3bound} and \eqref{gamma-n-spaltung}, we have
    \begin{align*}
       \mg*{\abs*{\Gamma_n} > \frac{\eta}{2}}\cap A_n^c \subset \bigcup_{k=K}^{\floor{(9/2)\log n}}
        \left\{ \max_{\alpha \in[0,1]} \abs{W_{\alpha,k,n}}>a^{-k},\, K_1≤k\right\}.
    \end{align*}
    Thus, Lemma \ref{lem:lvlCosts} together with   \eqref{3bound} and  \eqref{2bounds} imply that the  second  summand in \eqref{sum_with3} is bounded by $\eps/2$. This implies the assertion.
\end{proof}

\begin{proof}[Proof of \Cref{mainTheorem}]
    We apply \Cref{lem:convergence} to $G_n$ and $G_n^{≤K}$. The first condition, fdd convergence, is \Cref{lem:fdd}, the second condition is \Cref{prop:rest-klein}. The representation of the limit process stated in \Cref{mainTheorem} is the subject of Section \ref{sec:lim_rep}.
\end{proof}
% \begin{remark}
%     Because in \Cref{lem:fdd} the convergence is conditional in $\mathcal F_\infty$, we also have conditional weak convergence in \Cref{mainTheorem}. But since we have $\absnorm[\big]{G^{>K}_n}\to0$ only in probability, not a.s., this result would also only hold in probability.
% \end{remark}
We now transfer the fluctuation result for $\mathrm{QuickVal}$ in Theorem \ref{mainTheorem} to the original Quickselect process in \Cref{cor:qs}.
\begin{proof}[Proof of \Cref{cor:qs}]
    Let $\widetilde F_n$ be the inverse of $\Lambda_n$ in the statement of  \Cref{cor:qs}. By definition of $\Lambda_n$, the value of the element $U_{(k)}$ of rank $k$ within $U_1,\dots, U_n$ is given by $\frac k{n+1}$, so
    \begin{equation}\label{cor:qs:fn}
        \floor[\big]{(n+1)\widetilde F_n(\alpha)} = \abs[\big]{\mg*{U_i≤\alpha\mid 1≤ i≤ n}}
    \end{equation}
    % we have that $\frac1n\floor[\big]{n\widetilde F_n}$ is equal to the empirical distribution function $\alpha\mapsto \widehat F_n(\alpha) = \frac1n\abs*{\mg*{U_i<\alpha\mid 1≤ i≤ n}}$.
    for all $\alpha \in [0,1)$. Thus,
    % $S^{QS}_{\widetilde F_n(\alpha),n} = S_{\alpha,n}$
    $C_n\kl[\big]{\floor[\big]{(n+1)\widetilde F_n(\alpha)}}=S_{\alpha,n}$ a.s. for all $\alpha\in[0,1]$, see \eqref{qs:to:qv}. For $\alpha=1$ note that $\widetilde F_n(\alpha)=1$ and $C_n(n+1)=C_n(n)$ by definition.
    The Skorokhod distance $d_\mathrm{SK}$ is then bounded by
    \begin{align*}
        d_\mathrm{SK}\kl*{G_{n}, \left(\frac{C_n(\floor{t (n+1)}) - nS_{\Lambda_n(t)}}{\sqrt{n}}\right)_{t\in[0,1]}}
        & = d_\mathrm{SK}\kl*{G_{n}, \left(\frac{S_{\Lambda_n(t),n} - nS_{\Lambda_n(t)}}{\sqrt{n}}\right)_{t\in[0,1]}}
        \nl = d_\mathrm{SK}\kl*{G_{n}, (G_{\Lambda_n(t),n})_{t\in[0,1]}}
        \\
        &≤ \absnorm{\widetilde F_n-\mathrm{id}}.
    \end{align*}
    By \eqref{cor:qs:fn}, $\widetilde F_n$ is close to the empirical distribution function and thus converges a.s.\ uniformly to the identity $\mathrm{id}$ by the Glivenko--Cantelli theorem. The statement of \Cref{cor:qs}  then follows from Slutzky's theorem.
\end{proof}

\subsection{Explicit construction of the limit process}\label{sec:lim_rep}
Note that on a step-wise level, for finitely many $\phi_1,\dots, \phi_m \in\mg{0,1}^\ast$ of paths,
$G_{\phi_1,n},\dots, G_{\phi_m,n}$ converge, conditional on interval sizes, jointly to normally distributed random variables $Z_{\phi_1},\allowbreak \dots, Z_{\phi_m}$, with covariances given by
\begin{equation}\label{z-cov}
    \Ek*{Z_{\phi}Z_{\psi} \given \mathcal F_\infty} = \abs[\big]{[L_\phi,R_\phi) \cap [R_\psi, L_\psi)}
    = \kl[\big]{(R_\phi \minv R_\psi) - (L_\phi \maxv L_\psi)}^+
\end{equation}
for $\phi,\psi \in \mg{0,1}^\ast$, where $\abs{\cdot}$ denotes the length of an interval.
Using the Kolmogorov extension theorem \cite[Theorem~6.17]{kallenberg}, one can construct the entire family %\parencite $(Z_\phi\mid\phi\in\mg{0,1}^\ast)$ on a single probability space, ensuring the covariances in \eqref{z-cov}, cf.~\cite[Theorem~12.1.3]{Dudley:real-analysis-and-prob}. Furthermore, these random variables can be globally bounded as follows:

\begin{lemma}\label{lem:zphi-small}
    Almost surely, for all but finitely many $\phi\in\mg{0,1}^\ast$ we have
    \begin{equation}\label{eq:zphi-small}
        \abs{Z_\phi} < 2\sqrt{\abs{\phi}I_\phi}.
    \end{equation}
\end{lemma}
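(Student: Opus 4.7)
The plan is to apply a Gaussian tail bound together with the first Borel--Cantelli lemma. Conditional on $\mathcal F_\infty$, the random variable $Z_\phi$ is centred Gaussian with variance
\[
    \Ek*{Z_\phi^2 \given \mathcal F_\infty} = I_\phi
\]
by specializing \eqref{z-cov} to $\psi = \phi$. Hence $Z_\phi / \sqrt{I_\phi}$ is, conditionally, standard normal, and the standard Gaussian tail bound $\Pk{|N| > t} \le 2 e^{-t^2/2}$ applied with $t = 2\sqrt{|\phi|}$ gives
\[
    \Pk[\big]{|Z_\phi| \ge 2\sqrt{|\phi| I_\phi} \given \mathcal F_\infty} \le 2 e^{-2|\phi|}
\]
almost surely. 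Integrating out the conditioning, the unconditional probability obeys the same bound.

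Next I would sum the probabilities of the bad events over all $\phi \in \{0,1\}^\ast$, grouping by level $k = |\phi|$. Since there are exactly $2^k$ words at level $k$,
\[
    \sum_{\phi \in \{0,1\}^\ast}
    \Pk[\big]{|Z_\phi| \ge 2\sqrt{|\phi| I_\phi}}
    \le \sum_{k=0}^{\infty} 2^k \cdot 2 e^{-2k}
    = 2 \sum_{k=0}^{\infty} \left(\tfrac{2}{e^2}\right)^{k},
\]
which is finite because $2/e^2 < 1$.

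By the first Borel--Cantelli lemma, with probability one the event $\{|Z_\phi| \ge 2\sqrt{|\phi| I_\phi}\}$ occurs for only finitely many $\phi$, which is exactly \eqref{eq:zphi-small}. No step in this argument is delicate: the only thing to verify is that the exponential decay of the Gaussian tail beats the exponential growth $2^k$ in the number of nodes, which is why the factor $2$ (rather than, say, $1$) inside the square root in the statement is crucial. One could in fact weaken the constant $2$ to any $c > \sqrt{2 \log 2}$ and obtain the same conclusion by the same proof.
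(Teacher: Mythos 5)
Your argument is correct and takes essentially the same approach as the paper: condition on $\mathcal F_\infty$ so that $Z_\phi$ is centered Gaussian with conditional variance at most $I_\phi$, apply a Gaussian tail bound to get $\Prob\bigl(|Z_\phi|\ge 2\sqrt{|\phi|I_\phi}\bigr)=\mathrm{O}(e^{-2|\phi|})$, sum $2^k\cdot\mathrm{O}(e^{-2k})$ over levels $k$ (convergent because $e^2>2$), and invoke Borel--Cantelli. The only cosmetic difference is that the paper's proof uses the exact conditional variance $I_\phi-I_\phi^2$ together with a Mills-ratio integral estimate, while you read off the variance $I_\phi$ from \eqref{z-cov} and use the elementary bound $\Prob(|N|>t)\le 2e^{-t^2/2}$; both routes give the same ratio $2/e^2<1$.
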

\begin{proof}
Note that $Z_\phi$ has the normal distribution with variance $I_{\phi} - I_{\phi}^2 > I_{\phi}$. Let $Z$ be $\mathcal{N}(0,1)$-distributed. Then
    \begin{align}
        \Pk*{\abs{Z_\phi} > 2\sqrt{\abs{\phi}I_\phi}}
        ≤ \Pk*{Z > 2\sqrt{\abs{\phi}}}
        ≤ \frac2{\sqrt{2\pi}}\int_{2\sqrt{\abs{\phi}}}^\infty ye^{-y^2/2}\mathrm dy
        = \frac2{\sqrt{2\pi}}e^{-2\abs\phi}.
    \end{align}
    Since $e^2>2$, the sum of these probabilities over all $\phi$ remains finite and hence the Borel--Cantelli lemma implies the assertion.
\end{proof}
With \Cref{lem:zphi-small} and \Cref{lem:k1}, $\sum_{\phi}Z_\phi \eins_{[L_\phi,R_\phi)}$ converges uniformly a.s.\ and
\begin{equation}\label{ginf-explizit}
    G_{\infty} \overset d= \sum_{\phi\in\{0,1\}^\ast}Z_\phi \eins_{[L_\phi,R_\phi)},
\end{equation}
since left- and right-hand sides of the latter display have the same covariance function and thus the same finite dimensional distributions (cf.~\cite[Theorem 13.1]{billing}). We can thus use the left- and right-hand sides of \eqref{ginf-explizit} as an explicit construction of $G_\infty$ and henceforth assume pointwise equality in \eqref{ginf-explizit}.

\subsection{Number of swaps} \label{Sec:swaps}
In this section, we present a proof for Theorem \ref{satz:swap}.

In Hoare's partition algorithm applied to a uniform permutation, the number of swaps is well known to be, conditional on the rank of the pivot element, hypergeometrically distributed, see, e.g., \cite{ma00,mahmoud_swaps,dane}. The parameters of this hypergeometric distribution, in our notation, are $n-1$ (population size), $S_0+1$ (number of trials) and $n-S_0$ (number of successes).
Let $K_{\phi,n}$ denote the number of swaps at the path $\phi\in\mg{0,1}^*$ when searching within $n$ elements. On the $\abs\phi$-th level, the swaps at previous levels may have changed the order of the elements. Let $I_1,\dots, I_{S_{\phi,n}}$ denote the indices of elements that are compared to the pivot $U_\phi$ in the order in which the procedure \textit{partition} sees them. In this notation, the number of swaps at level $k$ is given by
\begin{equation}\label{def:kphi}
    K_{\phi,n} = \abs{\mg{U_{I_i}\mid 1≤i≤S_{\phi0,n},U_{I_i}>U_\phi}}.
\end{equation}

Now we use a variation of the perturbation argument from Section \ref{Sec:pert} as follows. We extend $I$ by the indices $I_{S_{\phi,n}+1},\dots, I_{\widetilde S_{\phi,n}}$ of the elements that only contribute to $\widetilde S_{\phi_n}$ in arbitrary order and let
\begin{equation}\label{def:ktilde}
    \widetilde K_{\phi,n} := \abs*{\mg{\widetilde U_{I_i}\mid 1≤i≤\widetilde S_{\phi0,n},\widetilde U_{I_i}>U_\phi}}.
\end{equation}
By \Cref{lem:stilde}, $S_{\phi0, n} ≤ \widetilde S_{\phi0, n} ≤ S_{\phi0, n} + \abs\phi + 1$ and we therefore sum over at most $\abs{\phi}+1$ more elements compared to $K_{\phi,n}$. Thus, $K_{\phi,n} \le \widetilde K_{\phi,n} ≤ K_{\phi,n} + \abs{\phi}+1$. With \eqref{def:ktilde} also the limit process can be interpreted: We have that $\widetilde S_{\phi0,n}$ is close to $nI_{\phi0}$ and the probability of a value to be bigger than the pivot is $I_{\phi1}/I_{\phi}$, so we expect that $\widetilde K_{\phi,n}$ is close to $n\frac{I_{\phi0}I_{\phi1}}{I_{\phi}}$. The fact that we draw without replacement does asymptotically not matter as $n\to\infty$. Hence, we define the limit process as
\begin{align}
    K_{\phi,\infty} &:= \frac{I_{\phi0}I_{\phi1}}{I_{\phi}},
&
    K_{\alpha,\infty} &:= \sum_{k=1}^\infty K_{\phi(\alpha,k),\infty}.
\end{align}
As for comparisons in \eqref{def:w}, we define a process $\overline W_n$ based on the perturbed $\widetilde K_{\alpha,n}$:
\begin{align}
    \overline W_{\phi,n} &:= \frac{\widetilde K_{\phi,n} -nK_{\phi,\infty}}{\sqrt n},
    &
    \overline W^{\le K}_{\alpha,n} &:= \sum_{k=0}^K  \overline W_{\alpha,k,n},
    &
    \overline W^{\le K}_{n} &:=  \kl[\big]{\overline W^{\le K}_{\alpha,n}}_{\alpha\in[0,1]}.
    \label{def:w:swap}
\end{align}
and the contributions per step for $G\swap$ as
\begin{align}
    \overline G_{\phi,n} &:= \frac{K_{\phi,n} - nK_{\phi,\infty}}{\sqrt n},
    &
    \overline G^{\le K}_{\alpha,n} &:= \sum_{k=0}^K  \overline G_{\phi(\alpha,k),n},
    &
    \overline G^{\le K}_{n} &:=\kl[\big]{\overline G^{\le K}_{\alpha,n}}_{\alpha\in[0,1]}.
    %&
    %\overline G^{>K}_{\alpha,n} &:= \sum_{k=K+1}^\infty \overline G_{\phi(\alpha,k),n}.
\end{align}
Our result for swaps corresponding to \Cref{lem:clt} is as follows:
\begin{lemma}\label{lem:swap:clt}
    For the number of swaps in Hoare's partition procedure, above some level $K\in\NN$ we have, conditional on $\mathcal F_k$, weak convergence in $\absnorm\cdot$ of $\overline G^{\le K}_n$ to the process is given by
    \begin{equation}
        \overline G^{≤K}_{\alpha,\infty} := \sum_{\abs\phi ≤ K} \kl*{
            Y_\phi\frac{I_{\phi0}I_{\phi1}}{I_{\phi}^{3/2}}
            + Z_{\phi0}\frac{I_{\phi1}}{I_{\phi}}
            + Z_{\phi1}\frac{I_{\phi0}}{I_{\phi}}
            - Z_{\phi}\frac{I_{\phi0}I_{\phi1}}{I_{\phi}^2}
        }\eins_{[L_\phi,R_\phi)}(\alpha),\quad \alpha\in[0,1],
    \end{equation}which conditioned on $\mathcal F_k$ is a centred Gaussian process. This convergence also holds unconditionally. The convergence also holds jointly with the convergence of the corresponding quantity $G^{\le K}_n$ for comparisons in \Cref{lem:clt}.
\end{lemma}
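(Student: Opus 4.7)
The plan is to mirror the proof of \Cref{lem:clt}, adding one extra layer of randomness coming from the hypergeometric distribution of the number of swaps at each node. First I would replace $\overline G^{\le K}_n$ by its perturbed analogue $\overline W^{\le K}_n$: using the sandwich $K_{\phi,n}\le \widetilde K_{\phi,n}\le K_{\phi,n}+\abs\phi+1$ noted after \eqref{def:ktilde}, for any fixed path $\alpha$ the uniform difference is bounded by $\sum_{k=0}^{K}(k+1)/\sqrt n=O(K^2/\sqrt n)\to 0$, so it suffices to establish the claimed conditional weak convergence for $\overline W^{\le K}_n$.

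The key decomposition of $\widetilde K_{\phi,n}$ runs as follows. Condition on $\mathcal F_\infty$ and on the pivots $\mg{U_\phi\mid \abs\phi\le K}$. By \Cref{lem_pert} the $\widetilde U_i$ are i.i.d.\ $\unif[0,1]$ and independent of $\mathcal F_\infty$, so the counts $\widetilde S_{\phi 0,n}$ and $\widetilde S_{\phi 1,n}$ are binomial, and the standard analysis of Hoare's partition shows that, given these counts, $\widetilde K_{\phi,n}$ is hypergeometrically distributed with mean $\widetilde S_{\phi 0,n}\widetilde S_{\phi 1,n}/\widetilde S_{\phi,n}$ and conditional variance of order $nI_{\phi 0}^{2}I_{\phi 1}^{2}/I_\phi^{3}$. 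Writing
\[
\widetilde K_{\phi,n}\;=\;\frac{\widetilde S_{\phi 0,n}\widetilde S_{\phi 1,n}}{\widetilde S_{\phi,n}}+R_{\phi,n}
\]
and Taylor-expanding the ratio around $(nI_{\phi 0},nI_{\phi 1},nI_\phi)$ yields
\[
\overline W_{\phi,n}\;=\;W_{\phi 0,n}\frac{I_{\phi 1}}{I_\phi}+W_{\phi 1,n}\frac{I_{\phi 0}}{I_\phi}-W_{\phi,n}\frac{I_{\phi 0}I_{\phi 1}}{I_\phi^{2}}+\frac{R_{\phi,n}}{\sqrt n}+o(1),
\]
which already accounts for three of the four terms in the proposed limit.

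Next I would apply the multivariate CLT in two stages. The rescaled binomial vector $(W_{\psi,n})$, indexed over $\abs\psi\le K+1$, converges conditionally on $\mathcal F_\infty$ to the Gaussian vector $(Z_\psi)$ of Section \ref{sec:lim_rep} with covariance \eqref{z-cov}; this produces the three $Z$-terms in $\overline G^{\le K}_{\alpha,\infty}$ and automatically couples them with the $Z_\psi$ appearing in \Cref{lem:clt}. Conditionally on the binomial counts, the centred hypergeometric residuals $R_{\phi,n}/\sqrt n$, for the finitely many paths $\abs\phi\le K$, are generated by disjoint partition experiments within disjoint subsets of $\mg{\widetilde U_1,\dots,\widetilde U_n}$ (up to negligible overlap coming from pivots); by the hypergeometric CLT each one converges to $Y_\phi I_{\phi 0}I_{\phi 1}/I_\phi^{3/2}$ with independent standard normal $Y_\phi$, jointly independent of the $(Z_\psi)$. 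Combining the two layers gives the stated conditional Gaussian limit for $\overline W^{\le K}_n$, jointly with the convergence of $G^{\le K}_n$ from \Cref{lem:clt}; the unconditional weak convergence follows because the limit distribution is a measurable function of $\mathcal F_\infty$ enriched by the independent normals.

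I expect the main obstacle to be the careful justification of the joint CLT for the binomial counts together with the hypergeometric residuals: namely, the asymptotic conditional independence of the $R_{\phi,n}/\sqrt n$ across different $\phi$ and from the binomial fluctuations, together with a uniform control of the Taylor-expansion remainder on the event $\{\min_{\abs\phi\le K+1}\widetilde S_{\phi,n}\ge cn\}$ (which holds with probability tending to $1$ for any $c<\min_{\abs\phi\le K+1}I_\phi$), so that the convergence survives upgrading from finite-dimensional marginals to $\absnorm{\cdot}$ for the step-function process $\overline W^{\le K}_n$.
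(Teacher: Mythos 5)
Your proposal follows essentially the same route as the paper's proof: reduce to the perturbed process $\overline W^{\le K}_n$ via the sandwich bound, condition on the counts $\widetilde S_{\phi,n}$ so that the $\widetilde K_{\phi,n}$ become conditionally independent hypergeometrics (handled in the paper by citing \cite{hypergeometric}), Taylor-expand the ratio $\widetilde S_{\phi 0,n}\widetilde S_{\phi 1,n}/\widetilde S_{\phi,n}$, and apply the multivariate CLT to the centred binomial fluctuations to recover the $Z_\psi$, yielding the four-term limit and the joint coupling with \Cref{lem:clt}. The ``main obstacle'' you flag is resolved in the paper exactly by this conditioning: given the counts, the hypergeometric residuals are independent of each other and of the binomial layer, and the law of the iterated logarithm controls the expansion remainder almost surely.
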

\begin{proof}
    Note that because $\abs{K_{\phi,n}-\widetilde K_{\phi,n}}≤\abs\phi+1$ for every path $\phi$, we again have $\absnorm[\big]{\overline G^{\le K}_{\alpha,n} - \overline W^{\le K}_{\alpha,n}} \to 0$ and it suffices to show convergence of $\overline W^{\le K}_{\alpha,n}$. Now condition on $\mg[\big]{\widetilde S_{\phi,n}\mid \phi\in\mg{0,1}^*,n\in\NN}$. Then, the variables $\widetilde K_{\phi,n}$ for $\phi \in \mg{0,1}^\ast$ are hypergeometrically distributed and independent from each other. By \cite{hypergeometric} we have conditional convergence
    \begin{equation}\label{lem:swap:1}
        \kl*{
            \widetilde K_{\phi,n}
            -\frac{\widetilde S_{\phi0,n}\widetilde S_{\phi1,n}}{\widetilde S_{\phi,n}}
        }
        \kl*{\frac
            {{\widetilde S_{\phi,n}}^{1.5}}
            {\widetilde S_{\phi0,n}\widetilde S_{\phi1,n}}
        } \longdto Y_\phi
    \end{equation}
    with $Y_\phi\sim N(0,1)$, as long as for all but a finitely many $n$ both $\widetilde S_{\phi0,n}>0$ and $\widetilde S_{\phi1,n}>0$ hold and if
    ${{\widetilde S_{\phi,n}}^{1.5}}/
    {\widetilde S_{\phi0,n}\widetilde S_{\phi1,n}} \to 0$.
    These conditions holds almost surely, so \eqref{lem:swap:1} also holds almost surely and thus also unconditionally.
    By the strong law of large numbers and Slutzky's lemma, we obtain
    \begin{equation}\label{swap:clt:prf:k}
        \kl*{\widetilde K_{\phi,n}-\frac{\widetilde S_{\phi0,n}\widetilde S_{\phi1,n}}{\widetilde S_{\phi,n}}}\frac1{\sqrt n} \longdto Y_\phi\frac{I_{\phi0}I_{\phi1}}{{I_{\phi}}^{1.5}}.
    \end{equation}
    These $Y_\phi$ are independent of each other, the interval sizes and $(\widetilde S_\phi)_{\phi\in\mg{0,1}^\ast}$. The next component is the influence of variations in $\widetilde S_{\phi,n}$. For this, define the centred sums $\widehat S_{\phi,n} := \widetilde S_{\phi,n} - nI_{\phi,n}$. By the law of the iterated logarithm, we have $\widehat S_{\phi,n} = O\kl*{\sqrt{n\log\log n}}$ a.s. We expand
    \begin{align}\label{swap:clt:prf:s}
        \frac{\widetilde S_{\phi0,n}\widetilde S_{\phi1,n}}
        {\widetilde S_{\phi,n}}
        &= \frac{
            \kl{nI_{\phi0} + \widehat S_{\phi0,n}}
            \kl{nI_{\phi1} + \widehat S_{\phi1,n}}
        }{
            \kl{nI_{\phi}  + \widehat S_{\phi,n}}}
        \nl
        = n\frac{I_{\phi0}I_{\phi1}}{I_{\phi}}
        + \widehat S_{\phi0,n}\frac{I_{\phi1}}{I_{\phi}}
        + \widehat S_{\phi1,n}\frac{I_{\phi0}}{I_{\phi}}
        - \widehat S_{\phi,n}\frac{I_{\phi0}I_{\phi1}}{I_{\phi}^2}
        + o(\sqrt n)
    \end{align}
    almost surely using the Taylor expansion $\frac1{a+x} = \frac1a - \frac x{a²} + \O(x^2)$. By the multivariate central limit theorem, the tuple $\kl{n^{-0.5}\widehat S_\phi \mid \abs\phi ≤ k+1}$ conditioned on $\mathcal F_\infty$ converges in distribution to $\kl{Z_\phi \mid \abs\phi ≤ k+1}$. Putting \eqref{swap:clt:prf:k} and \eqref{swap:clt:prf:s} together, we obtain the convergence
    \begin{equation}\label{swap:clt:1}
        \overline W_{\phi,n} =
        \sqrt{n}\kl*{\widetilde K_{\phi,n} - n\frac{I_{\phi0}I_{\phi1}}{I_{\phi}}}
        \longdto
        Y_\phi\frac{I_{\phi0}I_{\phi1}}{I_{\phi}^{1.5}}
        + Z_{\phi0}\frac{I_{\phi1}}{I_{\phi}}
        + Z_{\phi1}\frac{I_{\phi0}}{I_{\phi}}
        - Z_{\phi}\frac{I_{\phi0}I_{\phi1}}{I_{\phi}^2}
    \end{equation}
    for all $\abs\phi ≤ k$ jointly. Since $\overline W_{n}^{≤K} = \sum_{\abs\phi≤K} \overline W_{\phi,n}\eins_{[L_\phi,R_\phi)}$, it thus converges towards a mixed centred Gaussian process formed by the right-hand side of \eqref{swap:clt:1}. The convergence is jointly with the quantity for comparisons since  we use the same limits $Z_\phi$.
\end{proof}

Since $\Var(Z_{\phi}\mid \mathcal F_\infty) = I_{\phi}-I_{\phi}^2$, the variance of the random variable in \eqref{swap:clt:1} is falling geometrically with $\abs\phi$ by \Cref{lem:k1} and the sum of these terms along some value $\alpha\in[0,1]$ converges to a centred normally distributed random variable.
$Z_\phi$ and $Y_\phi$ have the same distribution as described in \Cref{satz:swap}, and the sum of the terms in \eqref{swap:clt:1} coincides with the definition of our limit process $G_{\alpha}\swap$ from \eqref{defi:gswap}.
% To define the limiting process, we formalize the definition of $Y_\phi$ and $Z_\phi$: Conditional on $\mathcal F_\infty$, define $(Y_\phi)_{\phi\in \mg{0,1}^*}$ as i.i.d.\ $N(0,1)$ random variables and
% $(Z_\phi)_{\phi\in \mg{0,1}^*}$ as an independent, central normal variable with covariance function
% \begin{equation}
%     \Cov\kl*{Z_\phi, Z_\psi \mid \mathcal F_\infty}
%     = I_{\phi \maxv \psi} - I_{\phi}I_{\psi},
% \end{equation}
% where $I_{\phi \maxv \psi}$ is $I_\phi$ if $\psi$ is a prefix of $\phi$ and vice versa, and $0$ if none is a prefix of another. Then the limiting process is given by
% \begin{equation}
%     G_{\alpha,k} := \sum_{\substack{k=0\\\phi:=\phi(k,\alpha)}}^\infty Y_\phi\frac{I_{\phi0}I_{\phi1}}{I_{\phi}^{1.5}}
%     + Z_{\phi0}\frac{I_{\phi1}}{I_{\phi}}
%     + Z_{\phi1}\frac{I_{\phi0}}{I_{\phi}}
%     - Z_{\phi}\frac{I_{\phi0}I_{\phi1}}{I_{\phi}}
%     .
% \end{equation}

For tightness, we need an equivalent to \Cref{lem:lvlCosts}:
\begin{lemma}\label{lem:swap:lvlCosts}
    For the number of swaps in Hoare's partition, there exist constants $a,b>1$ and $C_b>0$ such that one can decompose
    for all $k,n\in\NN$
    \begin{equation}
        \Pk*{\exists\abs\phi = k:\: \abs{\overline W_{\phi,n}}>a^{-k} + n^{-0.5},\, K_1≤k} \le C_b\exp\kl*{-b^k} + c(k,n)
    \end{equation}
    where the double sequence $c(k,n)$ satisfies
    \begin{equation}\label{ckn_konvergiert}
        \sum_{k=1}^{\floor{4.5\log n}} c(k,n) \to 0.
    \end{equation}
\end{lemma}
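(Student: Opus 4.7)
The plan is to mirror the structure of \Cref{lem:lvlCosts}, with an extra layer of analysis that accommodates the two stacked sources of randomness in Hoare's partition: the sizes of the sublists produced at each splitting step, and the hypergeometric draw that, conditional on those sizes, determines how many swaps occur. To separate them, we write
\begin{equation*}
    \sqrt n\,\overline W_{\phi,n}
    = \underbrace{\widetilde K_{\phi,n}-\frac{\widetilde S_{\phi0,n}\widetilde S_{\phi1,n}}{\widetilde S_{\phi,n}}}_{=:\,H_{\phi,n}}
    + \underbrace{\frac{\widetilde S_{\phi0,n}\widetilde S_{\phi1,n}}{\widetilde S_{\phi,n}}-n\frac{I_{\phi0}I_{\phi1}}{I_\phi}}_{=:\,B_{\phi,n}},
\end{equation*}
and bound each piece separately on the event $\{K_1\le k\}$ before union-bounding over the $2^k$ paths of length $k$.

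For the hypergeometric piece $H_{\phi,n}$, conditional on the triple $(\widetilde S_{\phi,n},\widetilde S_{\phi0,n},\widetilde S_{\phi1,n})$ the variable $\widetilde K_{\phi,n}$ is hypergeometric with mean $\widetilde S_{\phi0,n}\widetilde S_{\phi1,n}/\widetilde S_{\phi,n}$, so Hoeffding's inequality for the hypergeometric distribution gives $\Pk{|H_{\phi,n}|>t\mid \widetilde S_{\cdot,n}}\le 2\exp(-2t^2/\widetilde S_{\phi,n})$. We first intersect with the high-probability event $\{\widetilde S_{\phi,n}\le 2nI_\phi\}$, whose complement is controlled by the Chernoff bound of \Cref{lem:chernoff} at relative scale $1$ and is absorbed into $c(k,n)$. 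On this event, setting $t=\sqrt n\,a^{-k}/4$ and using $I_\phi\le k\fracc23^{k/2}$ on $\{K_1\le k\}$ from \Cref{lem:k1}, the tail bound becomes $2\exp(-c\,a^{-2k}\fracc32^{k/2}/k)$, and for $1<a<\fracc32^{1/4}$ this survives multiplication by the $2^{k+1}$ union-bound factor with a $C_b\exp(-b^k)$ surplus for some $b>1$, just as in \Cref{lem:lvlCosts}.

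For the binomial piece $B_{\phi,n}$, the Taylor expansion already used in \eqref{swap:clt:prf:s} gives
\begin{equation*}
    B_{\phi,n} = \widehat S_{\phi0,n}\frac{I_{\phi1}}{I_\phi} + \widehat S_{\phi1,n}\frac{I_{\phi0}}{I_\phi} - \widehat S_{\phi,n}\frac{I_{\phi0}I_{\phi1}}{I_\phi^2} + R_{\phi,n},
\end{equation*}
with $\widehat S_{\psi,n}:=\widetilde S_{\psi,n}-nI_\psi$ and a quadratic remainder satisfying $|R_{\phi,n}|=\O(|\widehat S_{\cdot,n}|^2/(nI_\phi))$. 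Since each of the three linear prefactors is at most $1$, on the event that $|\widehat S_{\psi,n}|\le \sqrt n\,a^{-k}/6$ for every $\psi\in\{\phi,\phi0,\phi1\}$ the linear part of $B_{\phi,n}$ is at most $\sqrt n\,a^{-k}/2$; the three complementary probabilities are bounded via \eqref{prf:chernoff} applied at scale $a^{-k}/6$ and yield the same $C_b\exp(-b^k)+c(k,n)$ shape. On this same event, combined with $\widetilde S_{\phi,n}\ge nI_\phi/2$ (again a Chernoff tail), one obtains $|R_{\phi,n}|=\O(a^{-2k}/I_\phi)=\O(1)$, contributing only $\O(n^{-1/2})$ to $\overline W_{\phi,n}$ after the $1/\sqrt n$ rescaling---which is exactly what the additive $n^{-1/2}$ term in the statement is designed to absorb.

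The main technical point that will require care is the compatibility of the various constants: the single $a>1$ has to satisfy simultaneously the condition $a^4<3/2$ needed in \Cref{lem:lvlCosts}, the analogous constraint for the hypergeometric tail above, and remain small enough for the estimate $R_{\phi,n}=\O(1)$ to hold uniformly in $\phi$ with $|\phi|=k$ on $\{K_1\le k\}$. With $a$ chosen sufficiently close to $1$, all contributions---the hypergeometric tail, the three binomial Chernoff tails for $\widehat S_{\psi,n}$, and the denominator-control tail $\widetilde S_{\phi,n}\ge nI_\phi/2$---take the form $2^{k+1}\exp(-b'^k)$ for some common $b'>1$, plus a $c(k,n)$ of the same shape as in \Cref{lem:lvlCosts}, so the summability \eqref{ckn_konvergiert} follows exactly as the argument for \eqref{reihen} there.
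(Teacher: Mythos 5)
Your decomposition $\sqrt n\,\overline W_{\phi,n}=H_{\phi,n}+B_{\phi,n}$ is a legitimate strategy, and it is genuinely different from the paper's proof: the paper avoids the hypergeometric distribution entirely by a sandwiching argument. It observes that, after fixing a position $t$, both Hoare pointers reach $U_t$ having performed fewer than $x$ swaps exactly when both of the \emph{independent} binomial counts $\widetilde S_{\phi1,t}\sim B(t,I_{\phi1})$ and $\widetilde S_{\phi0,n-t}\sim B(\widetilde S_\phi-t,I_{\phi0})$ are below $x$ (and symmetrically for ``above''). Choosing $t=\lfloor nI_{\phi0}/I_\phi\rfloor$ then reduces both tails of $\widetilde K_{\phi,n}$ directly to a product of two binomial Chernoff tails, which collapse to the very same bound $\exp(-a^{-2k}/(2I_\phi+a^{-k}n^{-1/2}))$ that appears in the proof of \Cref{lem:lvlCosts}. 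No Taylor expansion, no remainder, no conditioning on the denominator being large.

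That difference is not merely cosmetic, because your treatment of the remainder $R_{\phi,n}$ contains a genuine gap. You claim that on the good event $|R_{\phi,n}|=\O(a^{-2k}/I_\phi)=\O(1)$ and is therefore absorbed by the additive $n^{-1/2}$. But the event $\{K_1\le k\}$ supplies only an \emph{upper} bound $I_\phi\le k(2/3)^{k/2}$, not a lower one, and in fact for the admissible range $1<a<(3/2)^{1/4}$ the quantity $a^{-2k}/I_\phi$ is typically exponentially large in $k$: even in the most favourable case $I_\phi\asymp k(2/3)^{k/2}$ one has $a^{-2k}/I_\phi\asymp k^{-1}\big((3/2)^{1/2}/a^2\big)^k$ with $(3/2)^{1/2}/a^2>1$, and since $\Ek{I_\phi}=2^{-k}$ the typical behaviour is far worse. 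So $R_{\phi,n}/\sqrt n$ is not $\O(n^{-1/2})$ uniformly over paths, and the additive $n^{-1/2}$ slack in the statement does not absorb it. Worse, once $nI_\phi$ is only of order $\sqrt n\,a^{-k}$, the Taylor expansion of $1/(nI_\phi+\widehat S_{\phi,n})$ ceases to be valid at all, so the problem is not just a loose constant. Repairing the argument would require a separate treatment of paths with small $I_\phi$ (e.g.\ using $\widetilde K_{\phi,n}\le\widetilde S_{\phi0,n}$ and $nI_{\phi0}I_{\phi1}/I_\phi\le nI_\phi/4$ to show $\overline W_{\phi,n}$ is then deterministically small), which you do not do. The paper's stochastic-comparison route is precisely designed so that this degeneracy is handled automatically by the term $a^{-k}n^{-1/2}$ inside the Chernoff exponent, rather than appearing as an uncontrolled Taylor remainder.
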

\begin{proof}
    We fix some path $\abs{\phi} = k$ and condition on the interval sizes $I_{\phi 0}$ and $I_{\phi 1}$. Note that the distribution of $\widetilde K_{\phi, n}$ is invariant under permutations of the elements, so we can, without loss of generality, assume that the partition algorithm processes the elements in their original order.

    For any $1 \le t \le n$, consider how many elements the indices $i$ and $j$ from Hoare's partition algorithm have encountered before reaching the element $U_t$.
    The number $\widetilde S_{\phi1, t}$ of elements larger than $U_\phi$ left of $t$ is binomially $B\kl[\big]{t,I_{\phi1}}$-distributed and independent of the number $\widetilde S_{\phi0, n-t}$ of elements smaller than $U_\phi$ right of $t$, which is
    $B\kl[\big]{\widetilde S_\phi-t,I_{\phi0}}$-distributed. Now if both quantities are smaller than some $x\in \NN$, then both indices reach $U_t$ before swapping $x$ elements. When on the other hand both $\widetilde S_{\phi1, t}$ and $\widetilde S_{\phi0, n-t}$ are at least $x$, then $i$ and $j$  will swap at least $x$ times before reaching $U_t$. Hence, for all $x\in \RR$  we can bound, where the symbol $\lessgtr$ can be replaced by either $<$  (less than) at all three occurrences or by $>$ (larger than),
        % For any $1≤t≤n$, we can look at the time the indices $i$ and $j$ reach the $t$-th element. If both find more than $x\in \NN$ elements to swap before reaching $t$, then there will be more than $x$ swaps. If both find less than $x$ elements, then the indices will meet before doing $x$ swaps. The index $i$ then finds elements with probability $I_{\phi1}$ and $j$ finds elements with probability $I_{\phi0}$, independently. So the number of elements they find are binominally distributed, and we can bound for all $x\in \RR$
    \begin{equation}
        \Pk*{\widetilde K_{\phi,n}\lessgtr x} ≤ \Pk*{\widetilde S_{\phi1,t}\lessgtr x}\Pk*{\widetilde S_{\phi0,n-t}\lessgtr x}.
    \end{equation}
    Choosing $t=\floor{n\tfrac{I_{\phi0}}{I_{\phi}}} = n-\ceil{n\tfrac{I_{\phi0}}{I_{\phi}}}$, we obtain
    \begin{align}
        \Pk*{\abs{\overline W_{\phi,n}} \ge a^{-k} + n^{-0.5}}
        &= \Pk*{\abs*{\widetilde K_{\phi,n}-n\frac{I_{\phi0}I_{\phi1}}{I_\phi}}≥ \sqrt n a^{-k} + 1}
        \nl≤ \Pk*{\abs*{\widetilde S_{\phi1,t}
          -n\frac{I_{\phi0}I_{\phi1}}{I_\phi}}
          ≥ \sqrt n a^{-k} +1}
        \nl\phantom=\times\Pk*{\abs*{\widetilde S_{\phi0,n-t}
          -n\frac{I_{\phi0}I_{\phi1}}{I_\phi}}
          ≥ \sqrt n a^{-k} + 1}.
    \end{align}
    The latter difference is not centred because of rounding, but at most 1 of from centring. This offsets with the additional $n^{-0.5}$, and by Chernoff bounds we get
    \begin{align}
        \Pk*{\abs{\overline W_{\phi,n}} ≥ a^{-k} + \frac1{\sqrt n}}
        &≤ \exp\kl*{
            - \frac{\floor{n\frac{I_{\phi0}}{I_{\phi}}}a^{-2k}}{n(2I_{\phi1} + a^{-k}n^{-0.5})}
            - \frac{\ceil{n\frac{I_{\phi1}}{I_{\phi}}}a^{-2k}}{n(2I_{\phi0} + a^{-k}n^{-0.5})}
        }
        \nl≤ \exp\kl*{
            - \frac{a^{-2k}}{2I_\phi + a^{-k}n^{-0.5}}
        }.
    \end{align}
    This is the same bound as in \eqref{prf:chernoff} in the proof of \Cref{lem:lvlCosts} and the remaining of the present proof can then be done analogously.
\end{proof}
\begin{proof}[Proof of \Cref{satz:swap}]
By the same proof as for \Cref{prop:rest-klein}, we conclude that $\absnorm{G\swap _n - \overline G^{≤K}_n}$ converges to 0 in probability. This guarantees tightness, and with the fdd convergence in \Cref{lem:swap:clt} this implies process convergence.
\end{proof}
% With $I_{\phi1} = I_{\phi}-I_{\phi0}$ and $Z_{\phi1} = Z_{\phi}-Z_{\phi0}$ rewrite \eqref{swap:clt:1} to
% \begin{align}\label{swap:clt:2}
%     W_{\phi,n} &\dto
%     Y_\phi\frac{I_{\phi0}{I_{\phi}-I_{\phi0}}}{I_{\phi}^{1.5}}
%     + Z_{\phi0}\frac{I_{\phi}-2I_{\phi0}}{I_{\phi}}
%     + Z_{\phi}\frac{I_{\phi0}\kl{1-I_{\phi} + I_{\phi0}}}{I_{\phi}}.
% \end{align}
% By analogy, the same holds for $\phi0$ replaced by $\phi1$. Fix some $\alpha\in[0,1]$ and write $I_k:=I_{\alpha,k}$ for the sake of space. Then we define the limit process as
% \begin{align}
%     G_{\alpha,k} &:= \sum_{k=0}^{\infty}
%     Y_{k}\frac{I_{k+1}{I_{k}-I_{k+1}}}{I_{k}^{1.5}}
%     + Z_{k+1}\frac{I_{k}-2I_{k+1}}{I_{k}}
%     + Z_{k}\frac{I_{k+1}\kl{1-I_{k} + I_{k+1}}}{I_{k}}
%     \nl=
%     \sum_{k=0}^{\infty}
%     Y_{k}\frac{I_{k+1}{I_{k}-I_{k+1}}}{I_{k}^{1.5}}
%     + Z_{k+1}\kl*{\frac{I_{k}-2I_{k+1}}{I_{k}}
%         + \frac{I_{k+2}\kl{1-I_{k+1} + I_{k+2}}}{I_{k+1}}},
% \end{align}
% where $Y_k := Y_{\phi(\alpha,k)}$ and

\subsection{$\eps$-tame cost functions} \label{subsec:proof:bits}
In the present section we present a proof for Theorem \ref{thm:beta}.

First note that the expression in \eqref{cov:beta} is well-defined in view of the following lemma using $\eps$-tameness, cf.~\cite[Lemma 3.1]{fima14}:
\begin{lemma}\label{lem:beta:norm}
    Let $[L,R) \subseteq [0,1]$ be an interval of length $I=R-L>0$ and $u\in [L,R)$.
    For  $V$ being $\unif[0,1]$ distributed set $X := \eins_{[L,R)}(V) \beta(u,V)$. Then, for every $s \in (0,\eps\inv)$, uniformly in $L,R$,  we have
    \begin{equation*}
        \Ek{X^s} = I\cdot\Ek*{X^s\given V\in [L,R)}
        = \O\kl[\Big]{I^{1-\eps s}}.
    \end{equation*}
\end{lemma}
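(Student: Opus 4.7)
The plan is to compute $\mathbb{E}[X^s]$ by first using the indicator and then layer-cake, with the $(c,\eps)$-tameness controlling the tail. Since $\mathbf{1}_{[L,R)}(V)^s=\mathbf{1}_{[L,R)}(V)$, I would first write
\begin{equation*}
    \Ek{X^s} = \Ek[\big]{\eins_{[L,R)}(V)\beta(u,V)^s}
    = I\cdot\Ek[\big]{\beta(u,V)^s\given V\in[L,R)},
\end{equation*}
where I use that the conditional law of $V$ given $V\in[L,R)$ is uniform on $[L,R)$. The first equality in the lemma is then immediate, and it remains to bound the conditional moment by $O(I^{-\eps s})$.

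Next I would use the layer-cake identity
\begin{equation*}
    \Ek[\big]{\beta(u,V)^s\given V\in[L,R)}
    = s\int_0^\infty x^{s-1}\Pk[\big]{\beta(u,V)\ge x\given V\in[L,R)}\,\mathrm dx
\end{equation*}
and combine the trivial bound $\Pk{\beta(u,V)\ge x\mid V\in[L,R)}\le 1$ with the conditional tail bound coming from $(c,\eps)$-tameness,
\begin{equation*}
    \Pk[\big]{\beta(u,V)\ge x\given V\in[L,R)}
    \le \frac{\Pk{\beta(u,V)\ge x}}{I}
    \le \frac{c\, x^{-1/\eps}}{I},
\end{equation*}
valid for $x\ge 1$ by tameness (for $x<1$ one uses the trivial bound and the contribution is $O(1)$, absorbed into constants). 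The two bounds coincide at $x_\ast := (c/I)^{\eps}$, which for small $I$ is large; this is the natural split point.

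Finally, I would split the integral at $x_\ast$: on $(0,x_\ast)$ use the trivial bound to get a contribution $x_\ast^s/s \cdot s = x_\ast^s = (c/I)^{\eps s}$, and on $(x_\ast,\infty)$ use the polynomial tail to get a contribution
\begin{equation*}
    \frac{cs}{I}\int_{x_\ast}^\infty x^{s-1-1/\eps}\,\mathrm dx
    = \frac{cs}{I(1/\eps-s)}\,x_\ast^{s-1/\eps}
    = \frac{s}{1/\eps-s}(c/I)^{\eps s}.
\end{equation*}
The convergence at infinity is exactly where the hypothesis $s<1/\eps$ is used, and this is the one place the bound is delicate; outside of it the estimates are routine. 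Summing the two contributions yields $\Ek{\beta(u,V)^s\mid V\in[L,R)}=O(I^{-\eps s})$ uniformly in $u,L,R$, so $\Ek{X^s}=O(I^{1-\eps s})$ as claimed. The main (mild) obstacle is the interplay between the trivial and polynomial tail bounds at the crossover $x_\ast$; everything else is a direct computation.
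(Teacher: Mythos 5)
Your proof is correct and is essentially the paper's argument: the paper likewise applies the layer-cake formula $\Ek{X^s}=\int_0^\infty s x^{s-1}\Pk{X\ge x}\,dx$, bounds the tail by the minimum of $I$ and $cx^{-1/\eps}$ (which, divided by $I$, is exactly your pair of conditional bounds), splits the integral at the crossover $x_\ast=(c/I)^{\eps}$, and uses $s<\eps^{-1}$ for convergence of the tail integral.
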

\begin{proof}
    The tail distribution $F(x) = \P(X≥x)$ for $x>0$ is bounded by both $\Pk{V\in [L,R)} =  I$ and $\Pk{\beta(u,V)> x} ≤ C_\beta x^{-\eps\inv}$ for some constant $C$. These two bounds are equal when $x = \kl{I/C_\beta}^{-\eps}$, so
    \begin{align}
        \Ek{X^s} = \int_0^\infty sx^{s-1} F(x) \mathrm dx
        & ≤ I \int_0^{\kl{I/C_\beta}^{-\eps}} sx^{s-1}\mathrm dx
        + C_\beta \int_{\kl{I/C_\beta}^{-\eps}}^\infty sx^{s-1-\eps\inv} \mathrm dx
        \nl = O\kl*{I^{1-s\eps}} + O\kl*{I^{-s\eps + 1}},
    \end{align}
    where $s\eps < 1$ is required for the second integral to exist.
\end{proof}

We use the same approach as for the other cost measures: First, we show that the costs at some level are close to a sum of independent random variables to which we apply the central limit theorem. Then we show that the residual below some level is converging to $0$.
Since our cost now depends on the actual  values $U_1,\ldots,U_n$ instead of just whether the $U_i$ are contained in the respective interval or not, we cannot define $\widetilde S_{\alpha,k,n}$ as before. Instead, we only replace $U_i$ with $\widetilde U_i$ if it is below some level $K\in\NN$, defining for $i\in\NN$
\begin{equation}
    \widetilde U^{(K)}_i := \begin{cases}
        U_i & \abs{\phi_i} > K, \\
        \widetilde U_i & \abs{\phi_i} ≤ K
    \end{cases}
\end{equation}
and then for $k≤K$
\begin{align}
    S_{\alpha,k,n}^\beta &:=
    \sum_{i=\tau_{\alpha,k}+1}^n \eins_{[L_{\alpha,k},R_{\alpha,k})}
    \kl*{U_i}
    \beta\kl*{U_{\tau_{\alpha,k}},U_i}
    \\
    \widetilde S^{\beta,K}_{\alpha,k,n} &:=
    \sum_{i=1}^n \eins_{[L_{\alpha,k},R_{\alpha,k})}
    \kl*{\widetilde U^{(K)}_i}
    \beta\kl*{U_{\tau_{\alpha,k}},\widetilde U^{(K)}_i}
    \\
    I_{\alpha,k}^\beta &:=
    %\frac 1n\Ek*{\widetilde S^{(k)}_{\alpha,k,n} \given  \mathcal F_k} =
     \int_0^1 \eins_{[L_{\alpha,k},R_{\alpha,k})}
    \kl*{v}
    \beta\kl*{U_{\tau_{\alpha,k}},v}
    \mathrm dv
.
\end{align}
Note that, conditional on $\mathcal F_K$, the values
$\widetilde U^{(K)}_1,\widetilde U^{(K)}_2,\dots$ are i.i.d.\ unif[0,1] and hence
$\widetilde S^{(K)}_{\alpha,k,n}$ is a sum of i.i.d.\ random variables distributed as $X^\beta_{\alpha,k}$, with expectation $I_{\alpha,k}^\beta$. With these random variables, we define similar to before
\begin{align}
    G_{\alpha,k,n}^\beta &:=
    \frac{S_{\alpha,k,n}^\beta - nI_{\alpha,k,n}^\beta}{\sqrt n},
    &
    W_{\alpha,k,n}^{\beta,K} &:=
    \frac{\widetilde S_{\alpha,k,n}^{\beta,K} - nI_{\alpha,k,n}^\beta}{\sqrt n}.
\end{align}
The $G_{\alpha,k,n}^\beta$ are the single steps within the residual process, and $W_{\alpha,k,n}^{\beta,K}$ are the normalized sum of i.i.d.\ random variables used to approximate them.
\begin{lemma}\label{lem:beta:clt}
    The residuals $\sum_{k=0}^K G_{\alpha,k,n}$ above some level $K\in\NN$ converge conditional on $\mathcal F_K$ to a mixture of centred Gaussian processes with random covariance function given by
    \begin{equation}\label{cov:beta:k}
        (\alpha,\gamma) \mapsto
        \Cov\kl*{
            \sum_{k=0}^K X^\beta_{\alpha,k},
            \sum_{k=0}^K X^\beta_{\gamma,k}
            \mid \mathcal F_K},
    \end{equation}
    with $X^\beta_{\alpha,k}$ as in \eqref{def:xbeta}.
    % \begin{equation}
    %     X^\beta_{\alpha,k} := \eins_{[L_{\alpha,k},R_{\alpha,k})}(V)\cdot\beta(U_{\tau_{\alpha,k,n}},V),
    % \end{equation}
    % with $V\sim\unif[0,1]$ independent of $U_1,U_2,\dots$.
\end{lemma}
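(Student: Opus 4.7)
The plan is to mirror the argument in the proof of \Cref{lem:clt}, adapted to the unbounded $\varepsilon$-tame cost via the modified perturbed sequence $(\widetilde U^{(K)}_i)_i$ introduced above. I would split the proof into three steps: reduction to the perturbed quantity, a conditional multivariate CLT, and the upgrade to process convergence.

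First I would reduce from $\sum_{k=0}^K G^\beta_{\alpha,k,n}$ to $\sum_{k=0}^K W^{\beta,K}_{\alpha,k,n}$. The difference involves only the indices $i$ with $|\phi_i| \le K$ (of which there are at most $2^{K+1}-1$, deterministically) together with the $O(K)$ boundary shifts coming from the range $i \le \tau_{\alpha,k}$ versus $i = 1, \ldots, n$. Each such contribution is bounded by $\beta(U_{\tau_{\alpha,k}}, V)$ for some uniform $V$, and \Cref{lem:beta:norm} applied with any $s \in (2, 1/\varepsilon)$ (a non-empty range since $\varepsilon < 1/4$) gives uniform $s$-th moment control, so that the normalised error tends to $0$ in probability, uniformly in $\alpha \in [0,1]$.

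Next I would apply a conditional multivariate central limit theorem of Lyapunov type to $\sum_{k=0}^K W^{\beta,K}_{\alpha,k,n}$. Fixing $\alpha_1, \ldots, \alpha_r$, the conditional i.i.d.\ property of $(\widetilde U^{(K)}_i)_i$ given $\mathcal F_K$ noted just before the lemma exhibits $\sum_{k=0}^K W^{\beta,K}_{\alpha_j,k,n}$ as $n^{-1/2}$ times a sum of $n$ centred, conditionally i.i.d.\ vectors in $\RR^r$, each with the same joint law as the centred version of $\bigl(\sum_{k=0}^K X^\beta_{\alpha_j, k}\bigr)_{j=1}^r$. Some $(2+\delta)$-th moment with $\delta > 0$ is finite by \Cref{lem:beta:norm} (again because $\varepsilon < 1/4$), so Lyapunov's condition holds and the conditional multivariate CLT gives convergence to a centred Gaussian vector whose conditional covariance matrix equals \eqref{cov:beta:k}. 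Standard dominated convergence on characteristic functions transfers the statement from conditional to unconditional weak convergence.

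Finally, the upgrade from fdd-convergence to convergence in $(D[0,1], d_{\mathrm{SK}})$ would proceed as in the concluding step of the proof of \Cref{lem:clt}: conditional on $\mathcal F_K$, both the pre-limit and the candidate limit are step functions whose jumps are contained in the finite, $\mathcal F_K$-measurable set $\{U_\phi : |\phi| \le K\}$, so convergence at the endpoints of the resulting constancy intervals suffices. I expect the main technical difficulty to lie in the Lyapunov verification in the second step; uniform (in $k \le K$ and $\alpha$) control of the $(2+\delta)$-th conditional moments through \Cref{lem:beta:norm} is the crux, and it is exactly here that the assumption $\varepsilon < 1/4$ enters the argument.
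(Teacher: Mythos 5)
Your proposal is correct and mirrors the paper's argument exactly: reduce to the perturbed sum $\sum_{k\le K} W^{\beta,K}_{\alpha,k,n}$ (the difference is finitely many a.s.\ finite terms, all divided by $\sqrt n$), apply a conditional multivariate CLT to the conditionally i.i.d.\ summands given $\mathcal F_K$, and upgrade from fdd to process convergence via the shared step-function structure with jump set $\{U_\phi:|\phi|\le K\}$. One minor mischaracterization: for this lemma the CLT only requires finite conditional second moments of the $X^\beta_{\alpha,k}$, which by \Cref{lem:beta:norm} hold whenever $\varepsilon<1/2$ (the summands are conditionally i.i.d., so no Lyapunov condition and no $(2+\delta)$-th moment are needed); the restriction $\varepsilon<1/4$ is not the crux here but enters only in \Cref{prop:beta:prop:rest-klein}, where a fourth moment must be summable over levels.
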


\begin{proof}
    Conditioned on $\mathcal F_K$,  $\sum_{k=0}^K W^{\beta,K}_{\alpha,k,n}$ is a step function with $2^K$ different values. These values are each sums of $n$ independent, $\sum_{k=0}^K X^\beta_{\alpha,k}$-distributed variables. By the multivariate central limit theorem $\sum_{k=0}^K W^{\beta,K}_{\alpha,k,n}$ thus converges to a mixed Gaussian process with covariances \eqref{cov:beta:k}.
    The difference between $\sum_{k=0}^K W^{\beta,K}_{\alpha,k,n}$ and $\sum_{k=0}^K G^{\beta}_{\alpha,k,n}$ is at most the different costs of $U_\phi$ and $\widetilde U_\phi$ at paths $\phi\in\mg{0,1}^\ast$, $\abs\phi ≤ k$. Those are finitely many and are then divided by $\sqrt n$. By Slutzky's lemma, $\sum_{k=0}^K G^{\beta}_{\alpha,k,n}$ hence also converges to the mixed Gaussian process.
\end{proof}

With the same technique as before (see \Cref{lem:fdd} and the proof of \Cref{mainTheorem}), we can prove \Cref{thm:beta} with the following proposition:

\begin{proposition}\label{prop:beta:prop:rest-klein}
    For all $0<\eps<\frac14$ the residuals below level $K \in \NN$ converge
    as $n$, $K\to\infty$  to $0$:
    \begin{equation}
        \sup_{\alpha\in[0,1]}\abs[\Big]{\sum_{k>K}G_{\alpha,k,n}}\overset\P\longrightarrow 0
    \end{equation}
\end{proposition}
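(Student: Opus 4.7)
My approach mirrors the proof of \Cref{prop:rest-klein}: I split
\[
  \sum_{k>K} G^\beta_{\alpha,k,n}
  = \sum_{K<k\le H^*} G^\beta_{\alpha,k,n}
  + \sum_{k>H^*} G^\beta_{\alpha,k,n},
\qquad H^* := \floor{(9/2)\log n},
\]
and handle the two pieces separately. Reed's and Drmota's bounds on the binary search tree height give $\Prob(H_n>H^*)\to 0$; on the complementary event the costs vanish beyond level $H^*$, so the second sum reduces to $-\sqrt n\sum_{k>H^*} I^\beta_{\alpha,k}$. Applying \Cref{lem:beta:norm} with $s=1$ yields $I^\beta_{\alpha,k}=\mathrm{O}(I_{\alpha,k}^{1-\eps})$, and combined with the geometric decay $I_{\alpha,k}\le k(2/3)^{k/2}$ of \Cref{lem:k1}, this tail is of order $\sqrt n\cdot n^{-(9/4)(1-\eps)\log_2(3/2)}$ up to polylogarithmic factors, which vanishes since $(9/4)\log_2(3/2)\approx 0.912>1/2$ for every $\eps<1/2$.

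For the main range $K<k\le H^*$, I use the perturbed sequence $\widetilde U_i$ from \Cref{lem_pert} and define
\[
  \widetilde S^\beta_{\alpha,k,n} := \sum_{i=1}^n \eins_{[L_{\alpha,k},R_{\alpha,k})}(\widetilde U_i)\,\beta(U_{\tau_{\alpha,k}},\widetilde U_i),
\]
which is, conditional on $\mathcal F_\infty$, a sum of $n$ i.i.d.\ copies of $X^\beta_{\alpha,k}$ with mean $I^\beta_{\alpha,k}$. Because $X^\beta$ has only polynomial tails, Chernoff is unavailable; instead I truncate each summand at the threshold $M_k:=\sqrt n\,a^{-k}$ for some constant $a>1$. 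For the truncated part, a Bernstein-type bound using the variance estimate $\mathrm{Var}(X^\beta_{\alpha,k}\mid\mathcal F_\infty)=\mathrm{O}(I_{\alpha,k}^{1-2\eps})$ from \Cref{lem:beta:norm} and the interval estimate of \Cref{lem:k1} yields doubly-exponential decay provided $a^2<(3/2)^{(1-2\eps)/2}$. For the heavy-tailed remainder, $\eps$-tameness gives $\Prob(\exists i\le n: X^\beta_{\alpha,k,i}>M_k)\le n\cdot cM_k^{-1/\eps}$, and a union bound over $k\le H^*$ makes this $o(1)$ provided $9\log a<1-2\eps$. The admissible window for $a$ closes exactly as $\eps\uparrow 1/4$, which is the source of the hypothesis $\eps<1/4$. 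A further union bound over the $2^k$ cells at each level, in analogy with \Cref{lem:lvlCosts}, produces tails summing geometrically over $K<k\le H^*$.

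It remains to transfer the bound from $\widetilde S^\beta_{\alpha,k,n}$ back to $S^\beta_{\alpha,k,n}$. The two differ only at (i) the at most $k$ ancestor-pivot indices of $\phi(\alpha,k)$, for which $U_i\notin[L_{\alpha,k},R_{\alpha,k})$ whereas $\widetilde U_i$ may lie in that interval; (ii) the pivot index $i=\tau_{\alpha,k}$ itself; and (iii) descendant indices, where $U_i$ and $\widetilde U_i$ lie in the same subcell of $[L_{\alpha,k},R_{\alpha,k})$ but produce different $\beta$-values. Each contribution is bounded in $L^1$ or $L^2$ via \Cref{lem:beta:norm}, divided by $\sqrt n$, and summed over $K<k\le H^*$ to yield a negligible total error.

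The principal obstacle is the absence of exponential moments for $X^\beta$, which prevents the Chernoff argument of \Cref{lem:lvlCosts} from carrying over directly. The truncation trick fills the gap, but the joint constraints on $a$---Bernstein decay, the union bound on the heavy tail, and summability of the bias produced by the removed tails---are simultaneously satisfiable precisely when $\eps<1/4$. Heuristically, this range is the one in which the variance of $X^\beta$ decays geometrically with depth (needing $s(1-2\eps)>2$ in any $s$-moment bound) while the $s$-th moment of $X^\beta$ still exists ($s\eps<1$).
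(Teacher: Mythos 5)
Your proposal takes a genuinely different route from the paper's, and it has a gap that I think is fatal without substantial new ideas.

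The paper's proof of \Cref{prop:beta:prop:rest-klein} does \emph{not} use the perturbation $\widetilde U_i$ at all for the tail levels. Instead, \Cref{lem:beta:snorm} gives a conditional $s$-th moment bound on $G^\beta_{\alpha,k,n}$ directly (conditioning only on $L_{\alpha,k},R_{\alpha,k}$, not on $\mathcal F_\infty$), which is then integrated over the law of $I_{\alpha,k}$ (a product of $k$ independent uniforms), union-bounded over the $2^k$ cells at level $k$ by simply adding the $s$-th moments, and summed over $k>K$ in $L_s$. Choosing $s=4$ makes the $n$-dependent exponent $n^{2/s-1/2}$ vanish and the series over $k$ geometric, provided $s<\eps^{-1}$ and $s\ge 2/(1-2\eps)$, which is exactly where $\eps<\frac14$ enters. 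This is a self-contained moment computation, with no truncation, no Chernoff, no Bernstein, and no height bound $H^*$.

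The gap in your approach is step (iii) in the transfer from $\widetilde S^\beta_{\alpha,k,n}$ back to $S^\beta_{\alpha,k,n}$. For the comparison count, \Cref{lem:stilde} works because a descendant $U_i$ with $\phi_i$ extending $\phi(\alpha,k)$ contributes the \emph{same} indicator value as $\widetilde U_i$ (both lie in $[L_{\phi_i},R_{\phi_i})\subset[L_{\alpha,k},R_{\alpha,k})$), so the total error is the at-most-$k$ ancestor indices. For $\beta$-costs this breaks down: there are $\Theta(nI_{\alpha,k})$ descendant indices $i$, and while both $U_i$ and $\widetilde U_i$ fall in the same subcell, they produce different values $\beta(U_{\tau_{\alpha,k}},U_i)\neq\beta(U_{\tau_{\alpha,k}},\widetilde U_i)$. \Cref{lem:beta:norm} controls moments of $X^\beta_{\alpha,k}$, not of the \emph{difference} of two $\beta$-evaluations at nearby arguments, and $\eps$-tameness by itself says nothing about oscillation of $\beta(u,\cdot)$ at small scales (for the bit-comparison cost, nearby points in $[0,1]$ can differ hugely in cost). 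So the claimed ``bounded in $L^1$ or $L^2$ via \Cref{lem:beta:norm}'' does not follow, and the $\Theta(nI_{\alpha,k})$ unaccounted differences are of exactly the same order as the fluctuation you are trying to control. This is precisely why the paper restricts the perturbation to levels $\le K$ in the CLT part (\Cref{lem:beta:clt}), where only $O(2^K)$ indices are perturbed and the error is $O(1/\sqrt n)$, and then abandons the perturbation entirely for the tail.

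The truncation/Bernstein machinery you propose for the main range is not wrong in spirit (and is a reasonable reaction to the absence of exponential moments), but it is solving a harder problem than necessary while the transfer issue above remains open. A fix would have to either (a) prove a modulus-of-continuity estimate for $\beta(u,\cdot)$ under $\eps$-tameness, which is not available, or (b) abandon the perturbation and argue directly with $S^\beta_{\alpha,k,n}$ via moments -- which is what the paper does. I'd suggest looking at \Cref{lem:beta:snorm} and its use of the estimate $\tau_{\alpha,k}\wedge n\le n^{1/2}\tau_{\alpha,k}^{1/2}$ (borrowed from \cite{Matterer,fill:quickselect-residual}) for how to handle the random starting index $\tau_{\alpha,k}$ without coupling.
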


\noindent This is a consequence of the following lemma:
\begin{lemma}\label{lem:beta:snorm}
    For $s≥2$, we have uniformly in $\alpha,k,n, I_{\alpha,k}$
    \begin{equation}\label{beta:snorm}
        \Ek*{\abs[\big]{G^\beta_{\alpha,k,n}}^s \given L_{\alpha,k}, R_{\alpha,k}}
        = O\kl*{
            I_{\alpha,k}^{2-s\eps}n^{2-\frac s2}
            + k^{\frac s2}I_{\alpha,k}^{(\frac12-\eps)s}
        }.
    \end{equation}
\end{lemma}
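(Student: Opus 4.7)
The plan is to prove \Cref{lem:beta:snorm} by combining the perturbation argument of \Cref{Sec:pert} with Rosenthal's moment inequality and the tail estimates of \Cref{lem:beta:norm}. I would work conditionally on $\mathcal F_\infty$, which determines all interval endpoints and hence the pivot $U_{\tau_{\alpha,k}}$, and at the end take an outer expectation conditional on $L_{\alpha,k},R_{\alpha,k}$.

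First, I would introduce the perturbed cost
\begin{equation*}
\widetilde S^\beta_{\alpha,k,n}:=\sum_{i=1}^n \eins_{[L_{\alpha,k},R_{\alpha,k})}(\widetilde U_i)\,\beta(U_{\tau_{\alpha,k}},\widetilde U_i),
\end{equation*}
with the $\widetilde U_i$ as in \eqref{def_u_tilde}. By \Cref{lem_pert}, conditional on $\mathcal F_\infty$ the $\widetilde U_i$ are i.i.d.\ $\mathrm{unif}[0,1]$, so $\widetilde S^\beta_{\alpha,k,n}$ is a sum of $n$ conditionally i.i.d.\ terms with mean $nI^\beta_{\alpha,k}$. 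By \Cref{lem:beta:norm} each summand has conditional variance $O(I_{\alpha,k}^{1-2\eps})$ and $s$-th absolute moment $O(I_{\alpha,k}^{1-s\eps})$; Rosenthal's inequality then yields
\begin{equation*}
\Ek[\big]{|\widetilde S^\beta_{\alpha,k,n}-nI^\beta_{\alpha,k}|^s \given \mathcal F_\infty}=O\bigl(n^{s/2}I_{\alpha,k}^{(1-2\eps)s/2}+n\,I_{\alpha,k}^{1-s\eps}\bigr).
\end{equation*}
After division by $n^{s/2}$ the variance term provides $O(I_{\alpha,k}^{(1/2-\eps)s})$, matching the second summand of \eqref{beta:snorm} up to the $k^{s/2}$ factor, while the large-deviation term $n^{1-s/2}I_{\alpha,k}^{1-s\eps}$ is absorbed into $I_{\alpha,k}^{2-s\eps}n^{2-s/2}$ in the rare-event regime $nI_{\alpha,k}\lesssim 1$.

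Next, I would bound the perturbation error $S^\beta_{\alpha,k,n}-\widetilde S^\beta_{\alpha,k,n}$. By \Cref{lem:stilde} the indicator counts differ by at most $k$ indices, each corresponding to an ancestor pivot $U_\phi$ with $|\phi|<k$; the associated error summand is of the form $\pm\eins_{[L_{\alpha,k},R_{\alpha,k})}(\cdot)\,\beta(U_{\tau_{\alpha,k}},\cdot)$ evaluated at $U_\phi$ or $\widetilde U_\phi$, whose $s$-th conditional moment is $O(I_{\alpha,k}^{1-s\eps})$ by \Cref{lem:beta:norm}. A Rosenthal / Marcinkiewicz--Zygmund-type bound applied to these at most $k$ terms (using that the $\widetilde U_\phi$ are conditionally independent under $\mathcal F_\infty$) then produces, after division by $n^{s/2}$, the extra factor $k^{s/2}$ multiplying $I_{\alpha,k}^{(1/2-\eps)s}$ appearing in \eqref{beta:snorm}.

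The main obstacle I expect is obtaining the correct $k^{s/2}$ factor -- rather than the naive $k^s$ from direct Minkowski -- for the perturbation error, since the $k$ mis-matched pivots live in nested ancestor intervals and are coupled through the common pivot $U_{\tau_{\alpha,k}}$. Centering each error contribution and applying the variance ($s=2$) and full $s$-th moment bounds of \Cref{lem:beta:norm} separately will be the key technical step, analogous to the split that produced the two terms in \eqref{prf:chernoff} in the proof of \Cref{lem:lvlCosts}. The hypothesis $\eps<\tfrac14$ enters to ensure $s\eps<1$ at the moment exponents required downstream in \Cref{prop:beta:prop:rest-klein}.
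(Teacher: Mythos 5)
Your proposal follows a genuinely different route from the paper's proof, and it has two substantive gaps.

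\textbf{The perturbation does not preserve the $\beta$-weighted cost.} For the comparison count, the perturbation $U_i\mapsto\widetilde U_i$ changes at most $k$ indicator values (this is \Cref{lem:stilde}). But $\widetilde S^\beta_{\alpha,k,n}$ evaluates $\beta\kl{U_{\tau_{\alpha,k}},\widetilde U_i}$ rather than $\beta\kl{U_{\tau_{\alpha,k}},U_i}$. For every index $i$ whose $U_i$ and $\widetilde U_i$ both fall into $[L_{\alpha,k},R_{\alpha,k})$ the cost contribution changes, and a merely $\eps$-tame $\beta$ gives no local modulus of continuity with which to control $\beta(\cdot,\widetilde U_i)-\beta(\cdot,U_i)$. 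So the difference $S^\beta_{\alpha,k,n}-\widetilde S^\beta_{\alpha,k,n}$ is not a sum of ``at most $k$'' well-behaved terms; it is potentially of the same order as the whole sum. This is precisely why the paper, in Section~\ref{subsec:proof:bits}, only perturbs below level $K$ via $\widetilde U^{(K)}_i$ for \Cref{lem:beta:clt}, and for \Cref{lem:beta:snorm} does not perturb at all: it conditions on $\tau_{\alpha,k}$ and works with the exact sum $S^\beta_{\alpha,k,n}=\sum_{i>\tau_{\alpha,k}}^{n}\cdots$.

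\textbf{The rare-event factor goes in the wrong direction, and the source of $k^{s/2}$ is misidentified.} Rosenthal applied to the $n$-term i.i.d.\ sum $\widetilde S^\beta$ gives, after dividing by $n^{s/2}$, the bound $O\kl{I^{(\frac12-\eps)s}+n^{1-\frac s2}I^{1-s\eps}}$. You claim the term $n^{1-\frac s2}I^{1-s\eps}$ is ``absorbed into'' $I^{2-s\eps}n^{2-\frac s2}$ when $nI\lesssim1$, but $I^{2-s\eps}n^{2-\frac s2}=(nI)\cdot n^{1-\frac s2}I^{1-s\eps}$, so in the regime $nI<1$ the lemma's bound is \emph{smaller}, not larger: there is no absorption. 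The extra factor $nI$ in \eqref{beta:snorm} comes from the fact that $S^\beta_{\alpha,k,n}$ is exactly $0$ on the event $\{\tau_{\alpha,k}>n\}$, so one may multiply the conditional moment bound by $\Pk{\tau_{\alpha,k}\le n\given I_{\alpha,k}}\le 1\minv nI_{\alpha,k}$; the perturbed sum $\widetilde S^\beta$ never vanishes, so this factor is unavailable in your approach. And this sharper exponent matters downstream: in the proof of \Cref{prop:beta:prop:rest-klein} with $s=4$ one needs $\fracc2{3-s\eps}^k$ to be summable over $k$, which holds for $\eps<\frac14$; the Rosenthal-only exponent gives $\fracc2{2-s\eps}^k$, which diverges for all $\eps>0$. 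Finally, the $k^{\frac s2}$ factor in \eqref{beta:snorm} does not come from $k$ perturbed pivots. It comes from the second piece of the paper's decomposition, $n^{-s/2}(\tau_{\alpha,k}\minv n)^s(I^\beta_{\alpha,k})^s\le\tau_{\alpha,k}^{s/2}(I^\beta_{\alpha,k})^s$, combined with $\Ek{\tau_{\alpha,k}^{s/2}\given I_{\alpha,k}}=O(k^{s/2}I_{\alpha,k}^{-s/2})$ obtained by stochastically dominating $\tau_{\alpha,k}$ by a sum of $k$ i.i.d.\ $\mathrm{Geom}(I_{\alpha,k})$ variables. Even if your perturbation-error term were controllable, after division by $n^{s/2}$ it would contribute $O(n^{-s/2}k^{s/2}\cdots)$, far smaller than the second term of \eqref{beta:snorm} — so it could not be the source of that term.
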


\begin{proof}
    First, let $s\in 2\NN^+$ be even. Then decompose $G^\beta_{\alpha,k,n}$ into
    \begin{align}
        \kl[\big]{G^\beta_{\alpha,k,n}}^s
        &≤  n^{-\frac s2}\kl*{
            S^\beta_{\alpha,k,n} - (n-\tau_{\alpha,k})^+ I^\beta_{\alpha,k}
            - (\tau_{\alpha,k}\minv n)I^\beta_{\alpha,k}
        }^s
        \nl ≤ 2^s n^{-\frac s2}\kl*{
            \kl*{S^\beta_{\alpha,k,n} - (n-\tau_{\alpha,k})^+ I^\beta_{\alpha,k}}^s
            - (\tau_{\alpha,k}\minv n)^s (I^\beta_{\alpha,k})^s
        }.\label{sum_72}
    \end{align}
    The first part is a centred sum of $(n-\tau_{\alpha,k})^+$ independent, $X^\beta_{\alpha,k}$-distributed random variables; the second part arises from $n-(n-\tau_{\alpha,k})^+ = n \minv \tau_{\alpha,k}$. If we condition on $\tau_k$, expand the sum in $S^\beta_{\alpha,k,n}$ and group the resulting terms by the number of occurrences  of  $U_i$, then terms where  $U_i$ appears only once vanish caused by centring, and we have
    \begin{align}
        \MoveEqLeft
        \Ek*{\kl*{S^\beta_{\alpha,k,n} - (n-\tau_{\alpha,k})^+I^\beta_{\alpha,k}}^s
            \given L_{\alpha,k}, R_{\alpha,k},\tau_{\alpha,k}}
        \nl= \Ek[\bigg]{\kl[\Big]{
            \sum_{i=\tau_{\alpha,k}+1}^n\kl[\big]{
                \eins_{[L_{\alpha,k},R_{\alpha,k})}\kl*{U_i}\beta\kl*{U_{\tau_{\alpha,k}},U_i}
                - I^\beta_{\alpha,k}
            }}^s
            \given L_{\alpha,k}, R_{\alpha,k},\tau_{\alpha,k}}
        \nl= \sum_{l=1}^{s/2} \binom{(n-\tau_{\alpha,k})^+}l
        \sum_{i_1+\dots+i_l=s \atop i_1,\dots, i_l ≥ 2}
        \prod_{j=1}^l
        \Ek*{\kl{X^\beta_{\alpha,k}-I_{\alpha,k}^\beta}^{i_j}\given L_{\alpha,k}, R_{\alpha,k}}.\label{eq_old73}
    \end{align}
    By \Cref{lem:beta:norm}, $I^\beta_{\alpha,k} = O(I_{\alpha,k}^{1-\eps})$ and $\Ek{(X^\beta_{\alpha,k})^i} = O(I_{\alpha,k}^{1-\eps i})$, so the last product is of order
    $O(I_{\alpha,k}^{l-\eps s})$, and the sum in (\ref{eq_old73}) is equal to
    \begin{equation}
        = \sum_{l=1}^{s/2} O\kl[\big]{n^l I^{l-\eps s}_{\alpha,k}}
        = O(nI^{1-\eps s}_{\alpha,k})
        + O\kl[\Big]{n^{s/2}I^{(\frac12-\eps)s}_{\alpha,k}}.
    \end{equation}
    This holds uniformly over all $\tau_{\alpha,k}$. When $\tau_{\alpha,k}≥n$, the latter sum vanishes, so by multiplying with $\Pk{\tau_{\alpha,k}≤n \given I_{\alpha,k}} ≤ 1 \minv I_{\alpha,k}n$ we obtain
    \begin{eqnarray}
        n^{-\frac s2}\Ek{S^\beta_{\alpha,k,n} - (n-\tau_{\alpha,k})^+ \given L_{\alpha,k}, R_{\alpha,k}}
        = O\kl[\Big]{I^{2-\eps s}_{\alpha,k}n^{2-\frac s2}
        + I^{(\frac12-\eps)s}_{\alpha,k}},
    \end{eqnarray}
    which has the order stated in \eqref{beta:snorm}.

    To bound the second summand appearing in (\ref{sum_72}) we use an idea from \cite{Matterer,fill:quickselect-residual}: The estimate $\tau_{\alpha,k}\minv n ≤ n^{\frac12}\tau_{\alpha,k}^{\frac 12}$ implies
    \begin{align}
        %\Ek*{
        n^{-\frac s2}(\tau_{\alpha,k}\minv n)^s
        \kl*{I^\beta_{\alpha,k}}^s
        %\given L_{\alpha,k},R_{\alpha,k}}
        &=
        \tau_{\alpha,k}^{\frac s2}
        \kl*{I^\beta_{\alpha,k}}^s.
    \end{align}
    Conditional on $\mathcal F_k$, $\tau_{\alpha,k}$ is the sum of $k$ independent geometrically distributed variables with success probabilities $I_{\alpha,0}, \dots, I_{\alpha,k-1}$ and thus smaller in probability than a sum of $k$ independent Geom($I_{\alpha,k}$)-distributed random variables $\Gamma_1,\dots, \Gamma_k$. Thus
    \begin{align}
        \Ek*{
            \tau_{\alpha,k}^{\frac s2}
        \given L_{\alpha,k},R_{\alpha,k}}
        ≤
        \Ek[\bigg]{
        \kl[\Big]{\sum_{j=1}^k \Gamma_j}^{\frac s2}
        \given I_{\alpha,k}
        }
        ≤ k^{\frac s2} \Ek[\big]{\Gamma_1^{\frac s2}\given I_{\alpha,k}}
        = O(k^{s/2}I_{\alpha,k}^{-s/2}).
    \end{align}
    Together with $\kl*{I^\beta_{\alpha,k}}^s = O(I^{(1-\eps)s}_{\alpha,k})$, this implies
    \begin{equation}
         \Ek*{
            n^{-\frac s2}(\tau_{\alpha,k}\minv n)^s
            \kl*{I^\beta_{\alpha,k}}^s
        \given L_{\alpha,k},R_{\alpha,k}}
            = O\kl*{k^{s/2}I_{\alpha,k}^{(\frac12-\eps)s}}.
    \end{equation}
    This completes the proof of the present lemma for $s$ even. The general case $s≥2$ is implied by Lyapunov's interpolation inequality.
    % Letztens Endes wird nur der Wert s=4 gebraucht.
\end{proof}
We are now prepared to prove \Cref{prop:beta:prop:rest-klein}:
\begin{proof}[Proof of \Cref{prop:beta:prop:rest-klein}]
    For some path $\phi \in \mg{0,1}^k$ the interval length $I_{\phi}$ is distributed as the product of $k$ i.i.d.\ unif[0,1] random variables, so
    \begin{equation}
        \Ek*{\abs[\big]{G^\beta_{\phi,n}}^s \given L_{\phi}, R_{\phi}}
        = O\kl*{
            I_{\phi}^{2-s\eps}n^{2-\frac s2}
            + k^{\frac s2}I_{\phi}^{(\frac12-\eps)s}
        }.
    \end{equation}
 \Cref{lem:beta:snorm} hence implies
    \begin{equation}
        \Ek*{\abs[\big]{G^\beta_{\phi,n}}^s}
        = O\kl*{
            \fracc1{3-s\eps}^k n^{2-\frac s2}
            + k^{\frac s2}\fracc1{1+(\frac12-\eps)s}^k
        }.
    \end{equation}
    Summing over all $\phi$ with $\abs\phi=k$ implies
    \begin{equation}
        \Ek*{\max_{\abs\phi=k}\abs[\big]{G^\beta_{\phi,n}}^s}
        = O\kl*{
            \fracc2{3-s\eps}^k n^{2-\frac s2}
            + k^{\frac s2}\fracc2{1+(\frac12-\eps)s}^k
        }
    \end{equation}
    and the $L_s$-norm is thus bounded by
    \begin{equation}
        \lnorm*s{\max_{\abs\phi=k}\abs[\big]{G^\beta_{\phi,n}}}
        = O\kl*{
            \fracc2{3-s\eps}^{\frac ks} n^{\frac 2s-\frac12}
            + k^{\frac 12}\fracc2{1+(\frac12-\eps)s}^{\frac ks}
        }.
    \end{equation}
    Summing over $k\in\N$ yields a convergent series if $s<\eps\inv, s≥4$, and $s≥\frac{2}{1-2\eps}$. If $\eps < \frac14$, then $s$ can be chosen as $4$, and
    \begin{equation}
        \lnorm[\bigg]s{\max_{\abs\phi=k}\abs[\Big]{\sum_{k≥K}G^\beta_{\phi,n}}}
        ≤ \sum_{k≥K}\lnorm*s{\max_{\abs\phi=k}\abs[\big]{G^\beta_{\phi,n}}} \to 0.
    \end{equation}
\end{proof}

\Cref{thm:beta} now follows from \Cref{prop:beta:prop:rest-klein} and \Cref{lem:beta:clt} as for the previously discussed complexity measures.

\subsection{Path properties of the limit process}\label{sec:paths}
In the present section we prove the claims from Section \ref{sec:path} on path properties of the limit processes.
\begin{lemma}\label{lem:d2dg-equivalent}
    The metrics $d_2$ and $d_G$ are a.s.\ equivalent.
\end{lemma}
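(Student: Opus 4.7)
The plan is to prove topological equivalence of $d_2$ and $d_G$ almost surely, working directly with the representation \eqref{ginf-explizit} together with the covariance structure \eqref{z-cov}. For $\alpha\neq\beta$ with $J=J(\alpha,\beta)$, the paths $\phi(\alpha,k)$ and $\phi(\beta,k)$ agree up to level $J$ and then enter disjoint subtrees. Cancellation of the common prefix in \eqref{ginf-explizit} gives
\begin{equation*}
G_{\alpha,\infty}-G_{\beta,\infty}=\sum_{k>J}Z_{\phi(\alpha,k)}-\sum_{k>J}Z_{\phi(\beta,k)}.
\end{equation*}
Since every interval $[L_{\phi(\alpha,k)},R_{\phi(\alpha,k)})$ with $k>J$ is contained in $[L_{\phi(\alpha,J+1)},R_{\phi(\alpha,J+1)})$ and is therefore disjoint from every $[L_{\phi(\beta,k')},R_{\phi(\beta,k')})$ with $k'>J$, the two sums are conditionally uncorrelated given $\mathcal F_\infty$ by \eqref{z-cov}. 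Hence $d_G(\alpha,\beta)^2=V_\alpha(J)+V_\beta(J)$ with $V_\gamma(J)=\Var\kl[\big]{\sum_{k>J}Z_{\phi(\gamma,k)}\given\mathcal F_\infty}$.

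For the direction $d_2\to0\Rightarrow d_G\to0$, use that the $\alpha$-branch intervals are nested, so by \eqref{z-cov} each pair $(k,k')$ with $k\le k'$ contributes $I_{\phi(\alpha,k')}$ and
\begin{equation*}
V_\alpha(J)\le\sum_{k>J}(2(k-J)-1)\,I_{\phi(\alpha,k)}.
\end{equation*}
On $\{J\ge K_1\}$, Lemma \ref{lem:k1} gives $I_{\phi(\alpha,k)}\le k\fracc23^{k/2}$, so $V_\alpha(J)=\mathrm{O}\kl[\big]{J\fracc23^{J/2}}$. The same bound applies to $V_\beta(J)$, hence $d_2(\alpha_n,\beta_n)=2^{-J_n}\to0$ a.s.\ implies $d_G(\alpha_n,\beta_n)\to0$.

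For the converse $d_G\to0\Rightarrow d_2\to0$, observe that the $Z_{\phi(\alpha,k)}$ ($k>J$) are pairwise non-negatively correlated, since the intervals are nested and the covariances in \eqref{z-cov} are non-negative; consequently
\begin{equation*}
V_\alpha(J)\ge\Var\kl[\big]{Z_{\phi(\alpha,J+1)}\given\mathcal F_\infty}\ge c\,I_{\phi(\alpha,J+1)}
\end{equation*}
for a universal $c>0$, once $I_{\phi(\alpha,J+1)}$ is bounded away from $1$ (which holds for $J\ge K_1$). Suppose now that $d_G(\alpha_n,\beta_n)\to0$ but $J_n:=J(\alpha_n,\beta_n)\le K$ along a subsequence. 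Since there are only finitely many labels of length at most $K+1$, pass to a further subsequence on which $\phi(\alpha_n,J_n+1)$ equals some fixed $\phi^\ast$; then $d_G(\alpha_n,\beta_n)^2\ge c\,I_{\phi^\ast}>0$ a.s., contradicting the convergence.

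The main technical subtlety is that only topological equivalence can be expected: the ratio $I_{\phi(\alpha,J+1)}/2^{-J}$ is not bounded below in general, so no uniform Hölder comparison $d_G\gtrsim d_2^{\,\gamma}$ holds. This is precisely why Theorem \ref{satz:hölder} records different Hölder exponents under $d_2$ and $d_G$, and topological equivalence is exactly the input needed there: it ensures that statements about continuity of sample paths, and in particular the existence of a $d_2$-continuous modification, are preserved when passing between the two canonical distances.
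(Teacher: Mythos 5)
Your route via the process representation \eqref{ginf-explizit} is a legitimate alternative to the paper's computation, which works instead directly with the conditional covariance $\Sigma_{\infty,\alpha,\beta} = \Cov\kl{J(V,\alpha),J(V,\beta)\given\mathcal F_\infty}$ from \eqref{cov_desc} and the identity $\Ek{N^2}=\sum_{l\ge1}(2l-1)\Prob(N\ge l)$. Both lead to the display that the paper calls \eqref{di-formula},
\[
d_G(\alpha,\beta)^2 = \sum_{k>J}(2(k-J)-1)\kl{I_{\alpha,k}+I_{\beta,k}} - \kl[\Big]{\sum_{k>J}(I_{\alpha,k}-I_{\beta,k})}^2,
\]
and from there the two inclusions of balls are short.

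There is, however, a genuine gap in the covariance step. You assert that the two sums $\sum_{k>J}Z_{\phi(\alpha,k)}$ and $\sum_{k>J}Z_{\phi(\beta,k)}$ are conditionally uncorrelated because the intervals are disjoint, citing \eqref{z-cov}. Taken literally, \eqref{z-cov} does give $\Ek{Z_\phi Z_\psi\given\mathcal F_\infty}=0$ for disjoint $\phi,\psi$, but \eqref{z-cov} as printed is missing the product term $-I_\phi I_\psi$: the $Z_\phi$ are weak limits of $\frac{1}{\sqrt n}\sum_i(\eins_{[L_\phi,R_\phi)}(\widetilde U_i)-I_\phi)$, so the correct conditional covariance is $\abs{[L_\phi,R_\phi)\cap[L_\psi,R_\psi)} - I_\phi I_\psi$ (compare with \eqref{cov_oben} and \eqref{def:sigma:k}, where the $-S_\alpha S_\beta$ correction appears, and with Lemma \ref{lem:gphi-small}, which gives $\Var(Z_\phi\given\mathcal F_\infty)=I_\phi-I_\phi^2$; note also that \eqref{z-cov} as stated would give $\Var(Z_\epsilon\given\mathcal F_\infty)=1$, contradicting $G_{\epsilon,n}=-1/\sqrt n\to0$). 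With the correct covariance the cross terms are strictly negative, $\Cov(Z_{\phi(\alpha,k)},Z_{\phi(\beta,j)}\given\mathcal F_\infty)=-I_{\alpha,k}I_{\beta,j}$, so the claimed equality $d_G^2 = V_\alpha(J)+V_\beta(J)$ is false; the correct identity is $d_G^2 = V_\alpha(J)+V_\beta(J) + 2\kl{\sum_{k>J}I_{\alpha,k}}\kl{\sum_{j>J}I_{\beta,j}}$, which simplifies to the paper's \eqref{di-formula}.

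Fortuitously, the extra positive cross term helps both of your inequalities rather than hurting them, so your two final bounds (an upper bound $d_G^2\le\sum_{k>J}(2(k-J)-1)(I_{\alpha,k}+I_{\beta,k})$ giving $d_2\to0\Rightarrow d_G\to0$, and a lower bound $d_G^2\ge\Var(Z_{\phi(\alpha,J+1)}\given\mathcal F_\infty)>0$ giving the converse) are both valid; but as written the derivation of each passes through a false intermediate equality and must be repaired. Your contradiction-and-compactness argument for the reverse direction (finitely many labels of bounded depth, pass to a sub-subsequence with $\phi(\alpha_n,J_n+1)=\phi^\ast$ fixed) is a valid alternative to the paper's direct lower bound \eqref{di-bound}, $d_G^2\ge 2\sum_{k>J}(2(k-J)-1)(I_{\alpha,k}\minv I_{\beta,k})$; one small cleanup: for the fixed $\phi^\ast$ of depth $J_n+1\ge 1$ you simply have $1-I_{\phi^\ast}>0$ a.s., and no appeal to $K_1$ or a universal $c$ is needed. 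Your closing observation that only topological (not bi-Lipschitz) equivalence holds is correct and is indeed the reason Theorem \ref{satz:hölder} records different Hölder exponents for $d_2$ and $d_G$.
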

\begin{proof}
    To rewrite \eqref{def:dg} in terms of the interval lengths, recall that the covariances can be expressed with a unif[0,1] variable $V$ as in \eqref{cov_desc},
\begin{align}
    d_G(\alpha,\beta)^2 &=
     \Ek[\big]{\kl{J(\alpha,V)-J(\beta,V)}^2 \given \mathcal F_\infty} - (S_\alpha - S_\beta)^2
     \nl = \sum_{l≥1} (2l-1) \Pk[\big]{\abs{J(\alpha,V)-J(\beta,V)} ≥ l \given \mathcal F_\infty} - (S_\alpha - S_\beta)^2.
\end{align}
When $J(\alpha,V)≥l+J(\beta,V)$, the value $V$ must have followed the path of $\alpha$ for at least $l$ steps more than it has followed  the path of $\beta$. That happens if and only if $V$ is in the interval $I_{\alpha, J(\alpha,\beta)+l}$. Hence, writing $J:=J(\alpha,\beta)$,
\begin{align}
    d_G(\alpha,\beta)^2
     &= \sum_{k>J} (2(k-J)-1)\kl{I_{\alpha,k} + I_{\beta,k}}
      - \kl[\bigg]{\sum_{k>J} \kl{I_{\alpha,k} - I_{\beta,k}}}^2
     \nl= \sum_{k>J} \kl[\bigg]{\kl{I_{\alpha,k} + I_{\beta,k}}
     (2(k-J)-1) - \abs{I_{\alpha,k} - I_{\beta,k}}\sum_{l>J} \abs{I_{\alpha,l}-I_{\beta,l}}}
     .
     \label{di-formula}
\end{align}
Since by \Cref{lem:k1} the sums converge a.s.\ uniformly for $J\to\infty$, every $d_G$ ball contains a $d_2$ ball. Conversely, by reordering the last sum such that $l≤k$, and bounding $\abs{I_{\alpha,l}-I_{\beta,l}} < 1$,
\begin{align}
    d_G(\alpha,\beta)^2 &≥ \sum_{k>J} (2(k-J)-1)\br[\big]{\kl{I_{\alpha,k} + I_{\beta,k}} - \abs{I_{\alpha,k} - I_{\beta,k}}}
    \nl = \sum_{k>J} 2(2(k-J)-1)\kl{I_{\alpha,k} \minv I_{\beta,k}}.
    \label{di-bound}
\end{align}
Since for all $\alpha$,$\beta$ and $k$ we have $I_{\alpha,k},I_{\beta,k}>0$ a.s., every $d_2$ ball thus also contains a $d_G$ ball and the metrics are a.s.\ equivalent.
\end{proof}

To study H\"older continuity we prepare with bounds on the limit processes in terms of interval length of the underlying partition, cf.~(\ref{defi:irl}).

\begin{lemma}\label{lem:gphi-small}
Almost surely for all $\phi\in\mg{0,1}^\ast$ we have the following bounds:
    \begin{enumerate}
        \item\label{lem:cont:1} $G\comp_{\phi, \infty} :=Z_\phi = \O\kl*{\sqrt{\abs\phi I_\phi}}$,
        \item\label{lem:cont:2} $G_{\phi, \infty}\swap := Y_\phi\frac{I_{\phi0}I_{\phi1}}{I_{\phi}^{3/2}}
    + Z_{\phi0}\frac{I_{\phi1}}{I_{\phi}}
    + Z_{\phi1}\frac{I_{\phi0}}{I_{\phi}}
    - Z_{\phi}\frac{I_{\phi0}I_{\phi1}}{I_{\phi}^2} = \O\kl*{\sqrt{\abs\phi I_\phi}}$,
        \item\label{lem:cont:3} $G^{\beta}_{\phi, \infty} = \O\kl*{\sqrt{\abs\phi I_\phi^{1-2\eps}}}$.
    \end{enumerate}
\end{lemma}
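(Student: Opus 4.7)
The plan is to prove the three bounds separately, with (1) being essentially a restatement of \Cref{lem:zphi-small}, (2) a termwise reduction to (1) plus an extra Borel--Cantelli step for $Y_\phi$, and (3) a conditional-Gaussian tail argument exploiting $\eps$-tameness.

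\textbf{Part (1).} This is an immediate strengthening of \Cref{lem:zphi-small}: that lemma already supplies $|Z_\phi|<2\sqrt{|\phi|\,I_\phi}$ for all but finitely many $\phi$ almost surely. Since $Z_\phi$ is finite for every $\phi$, the remaining finitely many exceptions (in particular the degenerate case $\phi=\epsilon$ where $|\phi|\,I_\phi=0$) can be absorbed into a (random) larger constant $C=C(\omega)$, giving $|Z_\phi|\le C\sqrt{|\phi|\,I_\phi}$ uniformly in $\phi$, up to the harmless additive slack needed for small $|\phi|$.

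\textbf{Part (2).} I would bound the four summands of $G^{\mathrm{swap}}_{\phi,\infty}$ one at a time. For the three $Z$-summands, (1) gives $|Z_\psi|=O(\sqrt{|\psi|\,I_\psi})$ for $\psi\in\{\phi,\phi 0,\phi 1\}$; combined with the trivial inequalities $I_{\phi 0},I_{\phi 1}\le I_\phi$ and the AM--GM bound $I_{\phi 0}I_{\phi 1}\le I_\phi^2/4$, each of $Z_{\phi 0}I_{\phi 1}/I_\phi$, $Z_{\phi 1}I_{\phi 0}/I_\phi$ and $Z_\phi I_{\phi 0}I_{\phi 1}/I_\phi^2$ is bounded by a random constant times $\sqrt{|\phi|\,I_\phi}$. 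For the $Y_\phi$-summand, the $Y_\phi$ are i.i.d.\ standard normal, so Gaussian tails give $\Pk{|Y_\phi|>C\sqrt{|\phi|}}\le 2e^{-C^2|\phi|/2}$. Summing over the $2^k$ words of length $k$ and then over $k$ yields a convergent geometric series as soon as $C^2>2\log 2$, and Borel--Cantelli gives $|Y_\phi|=O(\sqrt{|\phi|})$ uniformly. Since $I_{\phi 0}I_{\phi 1}/I_\phi^{3/2}\le \tfrac14 I_\phi^{1/2}$ by AM--GM, the $Y_\phi$-term is also $O(\sqrt{|\phi|\,I_\phi})$.

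\textbf{Part (3).} This is the main step. Conditional on $\mathcal F_\infty$, the random variable $G^\beta_{\phi,\infty}$ is (by the single-step version of \Cref{lem:beta:clt}) centred normal with variance
\[
    \sigma_\phi^2 \;\le\; \Ek[\big]{(X^\beta_\phi)^2 \:\big\vert\: \mathcal F_\infty}
    \;=\; \int_{L_\phi}^{R_\phi} \beta\kl*{U_{\tau_\phi},v}^2\,\mathrm dv,
\]
and by \Cref{lem:beta:norm} applied with $s=2$ this is bounded by a \emph{deterministic} constant $C_0$ times $I_\phi^{1-2\eps}$, uniformly in $\phi$ (the tameness constant is uniform in the pivot value). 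The standard Gaussian tail then yields a deterministic $c>0$ such that
\[
    \Pk[\Big]{|G^\beta_{\phi,\infty}| > C\sqrt{|\phi|\,I_\phi^{1-2\eps}} \:\big\vert\: \mathcal F_\infty}
    \;\le\; 2\exp\kl*{-cC^2|\phi|}.
\]
Summing over all $\phi$ of length $k$ contributes $2^{k+1}e^{-cC^2 k}$, and summing this geometric series in $k$ converges once $C$ is chosen large enough. Taking expectations and applying Borel--Cantelli shows that at most finitely many $\phi$ violate the bound, and absorbing these into a larger random constant gives $G^\beta_{\phi,\infty}=O(\sqrt{|\phi|\,I_\phi^{1-2\eps}})$.

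The main obstacle is (3), specifically pinning down that $G^\beta_{\phi,\infty}$ really is centred Gaussian given $\mathcal F_\infty$ with the $\phi$-uniform, \emph{deterministic} variance bound $\sigma_\phi^2\le C_0\,I_\phi^{1-2\eps}$; once this is in place, the Gaussian tail combined with the $2^k$-vs-exponential comparison is exactly the Borel--Cantelli pattern already used for $Z_\phi$ in \Cref{lem:zphi-small} and for $Y_\phi$ in part (2).
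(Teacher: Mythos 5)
Your argument is correct and follows the paper's proof essentially line for line: part (1) is cited from \Cref{lem:zphi-small} (with the explicit and correct observation that $Z_\epsilon=0$ a.s.\ handles the degenerate $\phi=\epsilon$ case and that the finitely many exceptions are absorbed into the random implicit constant); part (2) isolates the $Y_\phi$ term and applies the same Gaussian-tail--plus--Borel--Cantelli pattern over the $2^k$ words of each length, while bounding the $Z$-factors by $1$ (or via AM--GM, which is equivalent); and part (3) identifies the conditional variance of $G^\beta_{\phi,\infty}$ with $\operatorname{Var}(X^\beta_\phi\mid\mathcal F_\infty)\le\Ek{(X^\beta_\phi)^2\mid\mathcal F_\infty}=\O(I_\phi^{1-2\eps})$ via \Cref{lem:beta:norm} with $s=2$, and then reruns the identical tail argument. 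The only thing you make more explicit than the paper --- usefully so --- is that the implicit constant from \Cref{lem:beta:norm} is deterministic and uniform in the pivot value and interval, which is precisely what makes the conditional Gaussian tail summable over $\{0,1\}^k$.
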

In particular, we can subsequently only focus on $G^\beta_{\phi,\infty}$, since the H\"older continuity for comparisons and swaps from Theorem \ref{satz:hölder} is then covered by choosing $\eps=0$.

\begin{proof}
We already showed part (\ref{lem:cont:1}) concerning $G\comp_{\phi, \infty}$   in \Cref{lem:zphi-small}. For part (\ref{lem:cont:2}) note that the factors in front of $Z_\phi$, $Z_{\phi0}$ and $Z_ {\phi1}$ are smaller than $1$, so we only require an additional bound for $Y_\phi$. For a sequence $(a_k)$ in $\RR^+$,
\begin{equation}
    \sum_{\abs\phi = k} \Pk{\abs{Y_\phi} > a_k} ≤ \sum_{\abs\phi = k} 2\exp\kl*{-\frac{a_k^2}2} = 2\exp\kl*{-\frac{a_k^2}2 + k\log(2)}.
\end{equation}
Choosing $a_k=2\sqrt k$ is sufficient for the sequence of the latter terms to converge, thus $Y_\phi = \O\kl{\sqrt{\abs \phi}}$ from the Borel--Cantelli lemma. With $I_{\phi0}I_{\phi1}I_{\phi}^{-1.5} ≤ \sqrt{I_{\phi}}$, this concludes the proof of  part (\ref{lem:cont:2}). For part (\ref{lem:cont:3}), recall that $G^\beta_{\phi,\infty}$ is mixed Gaussian with the same variance as $X^\beta_\phi$, as defined in \eqref{cov:beta}. By \Cref{lem:beta:norm} we have $\Ek[\big]{\kl{X^\beta_\phi}^s} = \O\kl{I_\phi^{1-\eps s}}$, hence $\Var(X^\beta_\phi) = \O\kl{I_\phi^{1-2\eps}}$.
We may repeat the same argument as for $Z_\phi$ and $Y_\phi$ to get $G^\beta_{\phi,\infty} = \O\kl*{\sqrt{\abs\phi I_\phi^{1-2\eps}}}$.
\end{proof}

\begin{proof}[Proof of \Cref{satz:hölder} for $d_2$] We prove the statement of \Cref{satz:hölder} only for $G^\beta$ with $\eps$-tame $\beta$. Note that in view of \Cref{lem:k1} the same proof applies to $G\comp$ and $G\swap$ with $\eps=0$.
% For $G^\beta$, let $\beta$ be $\eps$-tame. In the following proof, we will write $G$ for either $G\comp$, $G\swap$ or $G^\beta$.
\Cref{lem:gphi-small} implies that for any $\alpha,\beta\in[0,1]$ we have almost surely
\begin{equation}\label{g-bound}
    \abs[\big]{G^\beta_{\alpha,\infty}-G^\beta_{\beta,\infty}} ≤ \sum_{k>J} \abs{G^\beta_{\alpha,k,\infty}}+\abs{G^\beta_{\beta,k,\infty}}
    ≤ \sum_{k>J} \sqrt k \O\kl*{I_{\alpha,k}^{0.5-\eps} + I_{\beta,k}^{0.5-\eps}}.
\end{equation}
By \Cref{lem:k1}, these sizes are further bounded by
\begin{align}
    \abs[\big]{G^\beta_{\alpha,\infty}-G^\beta_{\beta,\infty}}
     &≤ \sum_{k>J}k^{1-\eps} \O\kl[\bigg]{\fracc23^{(1 - 2\eps)k/4}}
     % \nl ≤ C' J^{3/2}\fracc23^{J/2}
     = \O\kl*{J^{1-\eps}2^{-J\kl*{\frac14\log_2\frac32}(1-2\eps)}},
     % \nl = d_2(\alpha,\beta)^{\frac14\log_2\frac32}\O\kl[\big]{\log d_2(\alpha,\beta)}^{3/2}.
     \nonumber\\
     \log_2 \abs[\big]{G^\beta_{\alpha,\infty}-G^\beta_{\beta,\infty}} &≤ \kl[\big]{1+\mathrm o(1)}\log_2\kl*{d_2(\alpha,\beta)}\kl*{\frac14\log_2\frac32}(1-2\eps).
\end{align}
This implies $d_2$-Hölder continuity with all exponents smaller than $\frac14\log_2\frac32(1-\eps)$.
\end{proof}

Since in \eqref{di-formula} the interval sizes are decreasing exponentially, we see that $d_G(\alpha,\beta)^2$ is dominated by the term for $k=J+1$, which is $I_{\alpha,J}$. On the other hand $(G_{\alpha,\infty}-G_{\beta,\infty})^2$ is approximately $\sum_{k>J}I_{\alpha,k} + I_{\beta,k}$. So we may bound the intervals $I_{\alpha,k}, k>J$, with the length of their ancestor $I_{\alpha, J}$, again using a Borel--Cantelli argument.

\begin{lemma}\label{lem:relative-geometric}
    For any $\delta\in(0,1)$ and $s>\frac1\delta$ we have almost surely for all but finitely many $\phi,\psi\in\mg{0,1}^\ast$ that
    \begin{equation}\label{eq:relative-geometric}
        \frac{I_{\phi\psi}}{I_{\phi}^{1-\delta}} ≤ \abs\psi\fracc2{1+s}^{{\abs\psi}/s}.
    \end{equation}
    In particular, for $\gamma>0$, the following holds uniformly in $\alpha$:
    \begin{equation}
        \sum_{k>J} \sqrt k I_{\alpha,k}^\gamma
        = \sum_{k>J} I_{\alpha,k}^{\gamma(1-\delta)} k^{1.5}\fracc2{1+s}^{\gamma k/s}
        = \O\kl*{J^{1.5} I_{\alpha,J}^{\gamma(1-\delta)}}.
    \end{equation}
\end{lemma}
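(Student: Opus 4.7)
The proof will rest on a self-similarity property of the partition $(I_\phi)_{\phi\in\{0,1\}^*}$ and a Borel--Cantelli argument. The plan is as follows. First, I would observe that conditional on $I_\phi$, the quotient $I_{\phi\psi}/I_\phi$ is independent of $\mathcal{F}_\phi := \sigma(I_{\phi'}\,:\,\phi' \text{ not a strict descendant of }\phi)$ and is distributed like $I_\psi$: indeed, from the recursion \eqref{defi:irl}, for each step from a node $\phi'$ down to $\phi'0$ or $\phi'1$ inside the subtree rooted at $\phi$, the split variable $(U_{\tau_{\phi'}}-L_{\phi'})/I_{\phi'}$ is Uniform$[0,1]$ and independent of all interval lengths at or above $\phi'$. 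Iterating this, $I_{\phi\psi}/I_\phi$ has the same law as a product of $|\psi|$ i.i.d.\ Uniform$[0,1]$ random variables, and so $\mathbb{E}[I_\psi^s]=(1+s)^{-|\psi|}$ for $s>-1$, with the analogous identity for $I_\phi$.

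Next, I would apply Markov's inequality with the exponent $s$ to the event in \eqref{eq:relative-geometric}. By the independence and the moment formula above,
\begin{align*}
    \mathbb{E}\!\left[\frac{I_{\phi\psi}^s}{I_\phi^{(1-\delta)s}}\right]
    = \mathbb{E}[I_\phi^{\delta s}]\cdot\mathbb{E}[I_\psi^s]
    = \frac{1}{(1+\delta s)^{|\phi|}(1+s)^{|\psi|}},
\end{align*}
so
\begin{align*}
    \mathbb{P}\!\left(\frac{I_{\phi\psi}}{I_\phi^{1-\delta}} > |\psi|\Bigl(\tfrac{2}{1+s}\Bigr)^{|\psi|/s}\right)
    \le |\psi|^{-s}\,2^{-|\psi|}\,(1+\delta s)^{-|\phi|}.
\end{align*}
Summing over $\phi$ and $\psi$, and using that there are $2^k$ words of length $k$, one gets
$\sum_{\phi,\psi} \le \bigl(\sum_{k\ge 0}(2/(1+\delta s))^k\bigr)\bigl(\sum_{m\ge 1} m^{-s}\bigr)$,
which is finite exactly when $\delta s>1$ (geometric series) and $s>1$ (zeta series); both are implied by $s>1/\delta$ since $\delta\in(0,1)$. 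The Borel--Cantelli lemma then yields the first claim.

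For the ``in particular'' part, I would specialise the displayed bound by taking $\phi=\phi(\alpha,J)$ and $\psi$ the suffix of $\phi(\alpha,k)$ of length $k-J$, which gives $I_{\alpha,k}\le (k-J)\bigl(\tfrac{2}{1+s}\bigr)^{(k-J)/s}I_{\alpha,J}^{1-\delta}$ for all but finitely many pairs $(\alpha,k)$. Raising to the $\gamma$-th power and substituting into $\sum_{k>J}\sqrt{k}\,I_{\alpha,k}^\gamma$ with $m=k-J$ yields a uniform bound of the form $I_{\alpha,J}^{\gamma(1-\delta)}\sum_{m\ge 1}\sqrt{J+m}\,m^\gamma r^m$ with $r:=\bigl(\tfrac{2}{1+s}\bigr)^{\gamma/s}<1$. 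Bounding $\sqrt{J+m}\le \sqrt{J}+\sqrt{m}$ and evaluating the resulting convergent sums yields the stated estimate (in fact with $\sqrt{J}$ in place of $J^{3/2}$; the looser $J^{3/2}$ suffices for later applications). The only subtle point is uniformity in $\alpha$: since the exceptional set in the Borel--Cantelli step is finite in $(\phi,\psi)$ and $\alpha\mapsto \phi(\alpha,k)$ takes only finitely many values at each level, the exceptional $\alpha$'s contribute only finitely many extra terms, which can be absorbed into the implicit constant.
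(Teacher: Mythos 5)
Your proof of the main inequality \eqref{eq:relative-geometric} is essentially identical to the paper's: the same decomposition of $I_{\phi\psi}/I_\phi^{1-\delta}$ as $(I_{\phi\psi}/I_\phi)^s I_\phi^{\delta s}$ with the independence and moment identities $\mathbb E[I_\phi^{\delta s}]=(1+\delta s)^{-|\phi|}$, $\mathbb E[(I_{\phi\psi}/I_\phi)^s]=(1+s)^{-|\psi|}$, followed by Markov and Borel--Cantelli, with the same double-sum convergence condition $\delta s>1$, $s>1$. The paper's proof stops there and does not spell out the ``in particular'' display, which you supply: taking $\phi=\phi(\alpha,J)$, $\psi$ the remaining suffix, and summing the resulting geometric tail is the intended argument (and indeed gives the sharper exponent $J^{1/2}$, of which $J^{3/2}$ is a loosening). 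Your uniformity remark is correct but could be stated a bit more carefully: the finite exceptional set of pairs $(\phi,\psi)$ from Borel--Cantelli involves only words of bounded length $L$; for $J>L$ no pair with $\phi=\phi(\alpha,J)$ is exceptional, and for $J\le L$ the right-hand side $J^{3/2}I_{\alpha,J}^{\gamma(1-\delta)}$ is a.s.\ bounded below (since $\min_{|\phi|\le L}I_\phi>0$ a.s.), so the finitely many exceptional summands are absorbed into the implied constant.
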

\begin{proof}
    Note that $I_{\phi\psi}/I_{\phi}$ is the product of $\abs\psi$ i.i.d.\ unif[0,1] random variables and independent of $I_{\phi}$, which is also a product of $\abs\phi$ i.i.d.\ unif[0,1] random variables. By Markov bounding,
    \begin{align}
        \Pk*{\fracc{I_{\phi\psi}}{I_{\phi}^{1-\delta}}^s ≤ \abs\psi^s\fracc2{1+s}^{\abs\psi}}
        &≤ \Ek*{\fracc{I_{\phi\psi}}{I_{\phi}}^sI_\phi^{s\delta}}
        \abs\psi^{-s}\fracc{1+s}2^{\abs\psi}
        \nl= \fracc1{1+s}^{\abs\psi}\fracc1{1+s\delta}^{\abs\phi} \abs\psi^{-s}\fracc{1+s}2^{\abs\psi}
        \nl= 2^{-\abs\psi-\abs\phi} \fracc{2}{1+s\delta}^{\abs\phi} \abs\psi^{-s}.
    \end{align}
    Since $s\delta >1$ and $s>1$, the sum of these probabilities over all $\phi$ and $\psi$ converges. Hence, as in previous arguments, the Borel--Cantelli lemma implies the assertion.
\end{proof}

\begin{proof}[Proof of \Cref{satz:hölder} for $d_G$]
    With \eqref{g-bound} and \Cref{lem:relative-geometric}, we obtain
    \begin{equation}\label{g-geometric}
        \abs{G_{\alpha,\infty}-G_{\beta,\infty}}
        = \O\kl*{\sum_{k>J} \sqrt k
            \kl*{{\abs{I_{\alpha,k}}}^{0.5-\eps} +{\abs{I_{\beta,k}}}^{0.5-\eps}}}
        = \O\kl*{J^{1.5} I_{\alpha,J}^{(0.5-\eps)(1-\delta)}}
    \end{equation}
    for every $\delta \in (0,1)$ almost surely and uniformly in $\alpha$ and $\beta$.
    By the same reasoning as used to derive \eqref{di-bound} from \eqref{di-formula}, we can bound $d_G$ with only the term arising from $k=l=J+1$,
    \begin{equation}\label{di-geometric}
        d_G(\alpha,\beta)^2 ≥ \kl*{I_{\alpha,J+1} + I_{\beta,J+1}} - \kl*{I_{\alpha,J+1}-I_{\beta,J+1}}^2 ≥ I_{\alpha,J} - I_{\alpha,J}^2 \sim I_{\alpha,J}.
    \end{equation}
   Now, \eqref{g-geometric} and \eqref{di-geometric} imply Hölder continuity with any exponent smaller than $1-2\eps$.
\end{proof}

%\bibliographystyle{amsplain}
%\bibliography{master2}
%\bibliographystyle{alpha}
%\bibliography{master2}
% Damit wir keine Unterstriche in der Bibliographie haben
%\renewcommand\emph\textit
%\printbibliography

%\begin{thebibliography}{29}

%%%%%%
%\bibitem{AM} E.~Abbe, A.~Montanari: On the concentration of the number of solutions of random satisfiability formulas. \RSA~{\bf45} (2014) 362--382.

%\end{thebibliography}

\providecommand{\bysame}{\leavevmode\hbox to3em{\hrulefill}\thinspace}
\providecommand{\MR}{\relax\ifhmode\unskip\space\fi MR }
% \MRhref is called by the amsart/book/proc definition of \MR.
\providecommand{\MRhref}[2]{%
  \href{http://www.ams.org/mathscinet-getitem?mr=#1}{#2}
}
\providecommand{\href}[2]{#2}

\end{document}